\newtheorem{thm}{Theorem}[section]
\newtheorem{lemma}[thm]{Lemma}
\newtheorem{conjecture}[thm]{Conjecture}
\newtheorem{cor}[thm]{Corollary}
\newtheorem{prop}[thm]{Proposition}
\newtheorem{claim}[thm]{Claim}
\numberwithin{equation}{thm}
\theoremstyle{remark}
\newtheorem{remark}[thm]{Remark}
\newtheorem{remarks}[thm]{Remarks}
\newtheorem*{notation}{Notation}
\newtheorem*{acknowledgments}{Acknowledgments}
\newcommand{\vlim}{\operatornamewithlimits{\text{$v$}-lim}}
\renewcommand{\bar}[1]{#1\llap{$\overline{\phantom{\rm#1}}$}}
\newcommand{\lra}{\longrightarrow}
\DeclareMathOperator{\hhat}{{\widehat{h}}}
\DeclareMathOperator{\gal}{{Gal}}
\DeclareMathOperator{\Res}{{Res}}
\DeclareMathOperator{\ord}{{ord}}
\DeclareMathOperator{\pic}{{Pic}}
\newcommand{\bA}{{\mathbb A}}
\newcommand{\N}{{\mathbb N}}
\newcommand{\Q}{{\mathbb Q}}
\newcommand{\C}{{\mathbb C}}
\newcommand{\PP}{{\mathbb P}}
\newcommand{\Kbar}{{\bar{K}}}
\newcommand{\Qbar}{\bar{\mathbb{Q}}}
\newcommand{\bC}{{\mathbb C}}
\newcommand{\Gal}{{\rm Gal}}
\newcommand{\OO}{{\mathcal O}}
\newcommand{\cF}{\mathcal{F}}
\newcommand{\cV}{\mathcal{V}}
\newcommand{\GCD}{\operatorname{GCD}}
\renewcommand{\l}{\lambda}
\renewcommand{\a}{\alpha}
\renewcommand{\b}{\beta}
\renewcommand{\d}{\delta}
\newcommand{\e}{\epsilon}
\newcommand{\bfa}{{\mathbf a}}
\newcommand{\bfb}{{\mathbf b}}
\newcommand{\bfc}{{\mathbf c}}
\newcommand{\bff}{{\mathbf f}}
\newcommand{\bfg}{{\mathbf g}}
\newcommand{\bfu}{{\mathbf u}}
\newcommand{\bfv}{{\mathbf v}}
\newcommand{\bfw}{{\mathbf w}}
\newcommand{\bfx}{{\mathbf x}}
\newcommand{\bfA}{{\mathbf A}}
\newcommand{\cL}{\mathcal{L}}
\newcommand{\cX}{\mathcal{X}}
\newcommand{\fo}{\mathfrak o}
\DeclareMathOperator{\Div}{div}
\newcommand{\bP}{{\mathbb P}}
\newcommand{\cO}{\mathcal{O}}
\begin{document}



\title{Preperiodic points for families of rational maps}

\author{D.~Ghioca}
\address{
Dragos Ghioca\\
Department of Mathematics\\
University of British Columbia\\
Vancouver, BC V6T 1Z2\\
Canada
}
\email{dghioca@math.ubc.ca}

\author{L.-C.~Hsia}
\address{
Liang-Chung Hsia\\
Department of Mathematics\\ 
National Taiwan Normal University\\
Taipei, Taiwan, ROC
}
\email{hsia@math.ntnu.edu.tw}

\author{T.~J.~Tucker}
\address{
Thomas Tucker\\
Department of Mathematics\\
University of Rochester\\
Rochester, NY 14627\\
USA
}
\email{ttucker@math.rochester.edu}

\thanks{The first author was partially supported by NSERC. The second
  author was partially supported by NSC Grant  99-2115-M-003-012-MY3 and
  he also acknowledges the support from NCTS.   The third author was
  partially supported by NSF Grants
  DMS-0854839 and DMS-1200749.}


\begin{abstract}
Let $X$ be a smooth curve defined over $\Qbar$, let $a,b\in\bP^1(\Qbar)$ and let $f_{\lambda}(x)\in \Qbar(x)$ be an algebraic family of rational maps indexed by all $\lambda\in
X(\C)$. We study whether there exist
infinitely many $\lambda\in X(\C)$ such that both $a$ and $b$ are
preperiodic for $f_{\lambda}$. In particular we show that if
$P,Q\in\Qbar[x]$ such that $\deg(P)\ge 2+\deg(Q)$, and if
$a,b\in\Qbar$ such that $a$ is periodic for $\frac{P(x)}{Q(x)}$, but
$b$ is not preperiodic for $\frac{P(x)}{Q(x)}$, then there exist at
most finitely many $\l\in\C$ such that both $a$ and $b$ are
preperiodic for $\frac{P(x)}{Q(x)}+\l$. We also prove a similar result
for certain two-dimensional families of endomorphisms of $\bP^2$. As a
by-product of our method we extend a recent result of Ingram
\cite{ingram10} for the variation of the canonical height in a family
of polynomials to a similar result for families of rational maps.
\end{abstract}


\maketitle


\section{Introduction}
\label{intro}

In \cite{Matt-Laura}, Baker and DeMarco study the following question:
given complex numbers $a$ and $b$, and an integer $d\ge 2$, when do
there exist infinitely many $\lambda\in\C$ such that both $a$ and $b$
are preperiodic for the action of $f_{\lambda}(x):=x^d+\lambda$ on
$\C$?  They show that this happens if and only if $a^d=b^d$.  The
problem, originally suggested by Zannier, is a dynamical analog of a
question on families of elliptic curves studied by Masser and Zannier
in \cite{M-Z-1, M-Z-2, M-Z-3}.  This problem was motivated by
the Pink-Zilber conjectures in arithmetic geometry regarding unlikely
intersections between a subvariety $V$ of a semiabelian variety $A$
and families of algebraic subgroups of $A$ of codimension greater than
the dimension of $V$ (see \cite{BMZ, Habegger, Pink}).  A thorough
treatment of the problem of unlikely intersections on families of
semiabelian varieties can be found in \cite{Zannier}.  
  
 The authors extended the results of \cite{Matt-Laura} to more general
families of polynomials in \cite{prep}.  
The polynomials  considered in \cite{Matt-Laura,  prep} are algebraic families 
parameterized by points in an affine subset of the projective line. In
this paper, we extend our investigation to families of rational
maps and the parameter spaces are general algebraic curves defined over a number field. Moreover, general families of
two-dimensional  endomorphisms of $\bP^2$ are also studied.
As in~\cite{Matt-Laura, prep}, a key ingredient in the study of
families of mappings is the application of equidistribution
theorems~\cite{Baker-Rumely, CL, FRL, Yuan} to the situation of arithmetic
dynamics. Note that the equidistribution results of \cite{Baker-Rumely, CL, FRL} apply only in dimension 1. As we also treat
the case of higher dimensional parameter spaces in this paper, we apply
the  equidistribution results, obtained by Yuan~\cite{Yuan} and the
  recent result of Yuan-Zhang~\cite{YZ} to these  more general
  families of maps.   We prove the following higher genus
  generalization of Theorem 1.1 of 
\cite{Matt-Laura}.  
 
\begin{thm}
\label{second main}
Let $C$ be a projective nonsingular curve defined over $\Qbar$, let $\eta\in C(\Qbar)$, and let $A$ be the ring of functions on $C$ regular on $C\setminus\{\eta\}$. Let $\Phi,\Psi\in A$ be nonconstant functions.   
Let $P_i,Q_i\in\Qbar[x]$ for $i=1,2$ be polynomials such that $\deg(P_i)\ge \deg(Q_i)+2$ for each $i$. Let 
$$\bff_{\l}(x):=\frac{P_1(x)}{Q_1(x)}+\Phi(\lambda)\text{ and }\bfg_{\l}(x):=\frac{P_2(x)}{Q_2(x)}+\Psi(\l)$$ 
be one-parameter families of rational maps  indexed by all $\lambda\in C(\C)$, and let $a,b\in\Qbar$ such that both $Q_1(a)$ and $Q_2(b)$ are nonzero. If there exist infinitely many $\lambda\in C(\C)$ such that $a$ is preperiodic under the action of $\bff_{\lambda}$ and $b$ is preperiodic under the action of $\bfg_\l$, then for \emph{each} $\l\in C(\C)$, $a$ is preperiodic under the action of $\bff_{\l}$ if and only if $b$ is preperiodic under the action of $\bfg_{\l}$.
\end{thm}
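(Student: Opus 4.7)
The plan is to combine a variation of canonical heights across the family with arithmetic equidistribution, and then to upgrade the resulting equality of bifurcation measures to a pointwise equality of height functions on $C(\C)$.

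Two preliminary reductions. First, since $a$ being preperiodic for $\bff_\l$ is a polynomial condition on $\l$, the infinite set $S$ of common preperiodicity parameters actually lies in $C(\Qbar)$. Second, the generic canonical heights $\a := \hhat_{\bff,K}(a)$ and $\b := \hhat_{\bfg,K}(b)$ over $K=\Qbar(C)$ are both strictly positive. Since $\Phi$ is nonconstant, $\Qbar(\Phi)\hookrightarrow K$; a direct degree count --- using $\deg P_1\ge\deg Q_1+2$ to ensure the leading $\Phi$-coefficient of $\bff^k(a)$ survives the additive shift by $\Phi$ at each iterate --- shows $\bff^k(a)$ has $\Phi$-degree $d_1^{k-1}$ as a rational function in $\Phi$. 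Hence no identity $\bff^n(a)=\bff^m(a)$ can hold in $K$ for $n\ne m$, and non-isotriviality forces $\a>0$; likewise $\b>0$.

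Invoking the variation of canonical height for rational maps (the by-product advertised in the abstract, extending Ingram's theorem), one has
$$\hhat_{\bff_\l}(a) = \a\cdot h_C(\l) + O(1),$$
uniformly for $\l\in C(\Qbar)$, with the analogous identity for $(\bfg,b)$. This realizes $u_f(\l) := \hhat_{\bff_\l}(a)$ and $u_g(\l) := \hhat_{\bfg_\l}(b)$ as canonical heights attached to semipositive adelic metrized line bundles $\overline L_f, \overline L_g$ on $C$ of positive degrees $\a, \b$. Both heights vanish on the infinite set $S$, so Yuan's equidistribution theorem \cite{Yuan} applies to both bundles along any generic subsequence of $S$, yielding at every place $v$ of $\Qbar$ the equality
$$\b\cdot c_1(\overline L_f)_v = \a\cdot c_1(\overline L_g)_v$$
of canonical measures on the Berkovich analytification $C^{\mathrm{an}}_v$.

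Specializing to an archimedean place and working on the compact Riemann surface $C(\C)$, these measures are the distributional Laplacians of $u_f,u_g$ on $C(\C)\setminus\{\eta\}$. Their equality makes $w := \b u_f - \a u_g$ harmonic on $C(\C)\setminus\{\eta\}$. On the other hand, a direct iteration of $\bff_\l^k(a)$ near $\l=\eta$ (where $|\Phi(\l)|\to\infty$) gives $u_f(\l)\sim \a\log\frac{1}{|\pi|}$ and $u_g(\l)\sim \b\log\frac{1}{|\pi|}$ for a uniformizer $\pi$ at $\eta$, so the logarithmic singularities of $\b u_f$ and $\a u_g$ at $\eta$ cancel, and $w$ is bounded near $\eta$. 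A bounded harmonic function extends across a removable singularity, so $w$ extends to a harmonic function on the compact $C(\C)$, hence is constant; evaluating at any $\l_0\in S$, where both $u_f$ and $u_g$ vanish, gives $w\equiv 0$. Therefore $\b u_f\equiv\a u_g$ on $C(\C)$, and since $\a,\b>0$ their zero sets agree, giving the desired biconditional. The principal technical obstacles will be proving the height-variation estimate for rational maps in the necessary uniformity, and rigorously establishing the archimedean asymptotics of $u_f,u_g$ at $\eta$ that underpin the cancellation of logarithmic singularities.
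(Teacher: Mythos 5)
Your overall strategy --- realize the fiberwise canonical heights as heights attached to semipositive adelic metrized line bundles of the same degree class, invoke Yuan's equidistribution along the common small sequence, and then upgrade measure equality to height equality --- is exactly the paper's strategy (via its Theorem~\ref{general main}, Theorem~\ref{thm:uniform convergence}, and Corollary~\ref{to-use}). The preliminary reductions (the preperiodicity parameters lie in $C(\Qbar)$; the generic canonical heights over $\Qbar(C)$ are positive via the degree growth $\deg\bff^n(a)=d_\bfc s^n\to\infty$, $s=d_P-d_Q\ge 2$) also match the paper's Corollary~\ref{all in Kbar}, Proposition~\ref{degrees A and B 2}, and Remark~\ref{degrees A and B 23}.

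The final step, however, contains a genuine gap. You write that ``these measures are the distributional Laplacians of $u_f,u_g$ on $C(\C)\setminus\{\eta\}$'' and then analyze $w=\b u_f-\a u_g$ as a harmonic function on the compact Riemann surface. But $u_f(\l)=\hhat_{\bff_\l}(a)$ is a \emph{global} canonical height: it is a sum of local contributions over all places of a number field and is only defined for $\l\in C(\Qbar)$, not a real-analytic function on $C(\C)$. The archimedean curvature measure is the Laplacian of the archimedean \emph{local Green's function} $-\log\|s\|_\infty$, not of the global height. So the object to which your harmonicity/removable-singularity argument should be applied is the archimedean Green's function; applied to $u_f$ it is not even well-posed. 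Even after that correction, the archimedean argument alone gives proportionality of the metrics only at a single place; to conclude that the global heights differ by a constant you must match the metrics (equivalently, the Green's functions) at \emph{every} place, including the nonarchimedean ones, where the relevant measures live on the Berkovich analytification. This is precisely what the paper offloads to the Yuan--Zhang Calabi theorem (Theorem~\ref{Calabi}), which is a place-by-place statement: equidistribution gives $c_1(\overline L_1)_v=c_1(\overline L_2)_v$ for \emph{all} $v$, Calabi then forces the metrics to differ by a scalar at each $v$, and the common zero sequence finally pins the global additive constant to zero (Corollary~\ref{to-use}). Your harmonicity argument, if formulated for the archimedean Green's function, is in fact a hand-rolled proof of the curve case of Calabi at the archimedean place; to complete the proof along your lines you would need either to supply the nonarchimedean analogue (harmonicity on the Berkovich curve, or an explicit computation showing the local metrics agree at places of good reduction) or, as the paper does, to cite Yuan--Zhang directly.
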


The following is an immediate consequence of Theorem~\ref{second main}.
\begin{cor}
\label{second main corollary}
Let $a,b\in\Qbar$, let $P_i,Q_i\in\Qbar[x]$ for $i=1,2$ be polynomials such that $\deg(P_i)\ge \deg(Q_i)+2$ for each $i$. Assume that $a$ is preperiodic for $\frac{P_1(x)}{Q_1(x)}$ but $Q_1(a)\ne 0$, and that $b$ is not preperiodic for $\frac{P_2(x)}{Q_2(x)}$. Then there exist at most finitely many $\l\in\C$ such that both $a$ is preperiodic for $\frac{P_1(x)}{Q_1(x)}+\l$ and also $b$ is preperiodic for $\frac{P_2(x)}{Q_2(x)}+\l$.
\end{cor}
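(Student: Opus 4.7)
The plan is to derive the corollary as a direct specialization of Theorem~\ref{second main}. I would take $C = \bP^1_{\Qbar}$ and $\eta = \infty$, so that the ring $A$ of regular functions on $C \setminus \{\eta\} \cong \bA^1$ is $\Qbar[\l]$, and set $\Phi(\l) = \Psi(\l) = \l$. Both functions are nonconstant, so the hypotheses of Theorem~\ref{second main} on $\Phi$ and $\Psi$ are satisfied, and with these choices the families $\bff_\l$ and $\bfg_\l$ in that theorem are exactly $P_1(x)/Q_1(x) + \l$ and $P_2(x)/Q_2(x) + \l$.

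Before invoking the theorem I need to verify both of its nondegeneracy hypotheses $Q_1(a)\ne 0$ and $Q_2(b)\ne 0$. The first is part of the standing assumption. For the second, note that because $\deg P_2 \geq \deg Q_2 + 2$, the point at infinity is a (super-attracting) fixed point of $P_2(x)/Q_2(x)$ on $\bP^1$; hence if $Q_2(b) = 0$ then the orbit of $b$ lands at $\infty$ after one step and $b$ is preperiodic under $P_2/Q_2$, contradicting the hypothesis. Therefore $Q_2(b) \ne 0$ automatically.

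With the setup in place, I would argue by contradiction. Assume there are infinitely many $\l \in \C$ for which $a$ is preperiodic under $P_1(x)/Q_1(x) + \l$ and $b$ is preperiodic under $P_2(x)/Q_2(x) + \l$. Theorem~\ref{second main} then forces the dichotomy to collapse: for \emph{every} $\l \in C(\C)$, $a$ is preperiodic under $\bff_\l$ if and only if $b$ is preperiodic under $\bfg_\l$. Specializing to $\l = 0$, the preperiodicity of $a$ under $P_1/Q_1$ (given) would imply preperiodicity of $b$ under $P_2/Q_2$, contradicting the hypothesis on $b$. This completes the deduction.

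There is essentially no obstacle to this argument, since all of the substantive dynamical and arithmetic content—equidistribution, height machinery, and the rigidity of the intersection of the two loci of preperiodic parameters—is packaged inside Theorem~\ref{second main}. The only small point requiring care is the automatic verification that $Q_2(b)\ne 0$, which uses the degree inequality to rule out the unique potentially problematic specialization.
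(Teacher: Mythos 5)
Your proof is correct and is precisely the specialization the paper has in mind; the paper simply declares the corollary an ``immediate consequence'' of Theorem~\ref{second main} without spelling out the choices $C=\bP^1$, $\eta=\infty$, $\Phi=\Psi=\mathrm{id}$, or the specialization-at-$\l=0$ argument. Your verification that $Q_2(b)\ne 0$ is automatic (because otherwise $b$ would land on the superattracting fixed point $\infty$ after one step and hence be preperiodic under $P_2/Q_2$, contradicting the hypothesis) is a small point the paper leaves tacit, and you correctly identified it as the one nondegeneracy hypothesis of Theorem~\ref{second main} that is not explicitly repeated in the corollary's statement.
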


Theorem~\ref{second main} will follow from our main result (see
Theorem~\ref{general main}). Our main result (see Theorem~\ref{general main}) also has applications
to the study of \emph{post-critically finite} maps. A map $f:\bP^1\lra \bP^1$ is
called post-critically finite (PCF) if each critical point of $f$ is
preperiodic under the action of $f$.  The post-critically finite maps
are important in algebraic dynamics; recently Baker and DeMarco \cite{BD-preprint} have
made a far-reaching conjecture about them. Baker and DeMarco are interested in locating the post-critically finite polynomials within the moduli space $\mathcal{P}_d$ of all polynomial maps of degree $d$.  Dujardin and Favre \cite{DF}
showed that the PCF maps are equidistributed with respect to the bifurcation measure
in $\mathcal{P}_d$; in particular, they form a Zariski dense subset of $\mathcal{P}_d$. Baker and DeMarco aim at
characterizing curves (or subvarieties) in $\mathcal{P}_d$
 containing a Zariski dense subset of PCF
maps; the expectation is that such subvarieties are very special.
Roughly speaking, \cite{BD-preprint} conjectures that a subvariety $V\subset \mathcal{P}_d$  contains a Zariski
dense subset of PCF maps if and only if $V$ is cut out by critical orbit relations. The
notion of ``critical orbit relation'' is a bit delicate, as one needs to take into account
the presence of symmetries in any given family of polynomials. We can
prove the following result which offers support to the main conjecture of \cite{BD-preprint}.
 
\begin{thm}
\label{PCF maps}
Let $f,g\in\Qbar[z]$ be polynomials of degree larger than $1$, let $C\subset \bA^2$ be a curve with the property that its projective closure in $\bP^2$ is a nonsingular curve with exactly one point at infinity. If there exist infinitely many points $(x,y)\in C(\Qbar)$ such that $f(z)+x$ and $g(z)+y$ are both PCF maps, then for each point $(x,y)\in C(\C)$, we have that $f(z)+x$ is PCF if and only if $g(z)+y$ is PCF.
\end{thm}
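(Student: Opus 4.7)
The strategy is to reduce Theorem~\ref{PCF maps} to finitely many applications of Theorem~\ref{second main}, one for each pair of critical points of $f$ and $g$. The key structural observation is that the critical points of $f(z)+x$ are independent of $x$: since $\bigl(f(z)+x\bigr)'=f'(z)$, the finite critical points of $f(z)+x$ are just the (finitely many) roots $c_1,\dots,c_r\in\Qbar$ of $f'$, and similarly $g(z)+y$ has finite critical points $d_1,\dots,d_s\in\Qbar$ independent of $y$. Because $\infty$ is a superattracting, hence preperiodic, fixed point of every polynomial of degree $\ge 2$, the map $f(z)+x$ is PCF if and only if every $c_i$ is preperiodic under iteration of $f(z)+x$, and analogously for $g(z)+y$.

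To fit this into Theorem~\ref{second main}, let $\bar C$ be the projective closure of $C$ in $\bP^2$; by hypothesis $\bar C$ is nonsingular with a unique point $\eta$ at infinity, so $\bar C\setminus\{\eta\}=C$. The affine coordinates then restrict to regular functions $\Phi:=x|_C$ and $\Psi:=y|_C$, i.e., elements of the ring $A$ appearing in Theorem~\ref{second main}. We may assume we are not in the degenerate situation where $\bar C$ is a coordinate line, which must be disposed of separately; then both $\Phi$ and $\Psi$ are nonconstant on $\bar C$. Setting $P_1=f$, $P_2=g$, $Q_1=Q_2\equiv 1$, the algebraic families $\bff_\lambda(z):=f(z)+\Phi(\lambda)$ and $\bfg_\lambda(z):=g(z)+\Psi(\lambda)$ satisfy the hypotheses of Theorem~\ref{second main}: the degree bounds $\deg P_k\ge\deg Q_k+2$ hold since $\deg f,\deg g\ge 2$, and $Q_k(c_i)=Q_k(d_j)=1\ne 0$ for every $i,j$.

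Now fix any pair of critical points $(c_i,d_j)$. The hypothesis of Theorem~\ref{PCF maps} produces infinitely many $\lambda\in C(\Qbar)$ at which $f(z)+\Phi(\lambda)$ and $g(z)+\Psi(\lambda)$ are simultaneously PCF; in particular $c_i$ is preperiodic for $\bff_\lambda$ and $d_j$ is preperiodic for $\bfg_\lambda$ at each such $\lambda$. Applying Theorem~\ref{second main} with $a=c_i$, $b=d_j$ yields: for every $\lambda\in\bar C(\C)$, $c_i$ is preperiodic for $\bff_\lambda$ if and only if $d_j$ is preperiodic for $\bfg_\lambda$. Running this equivalence over the finitely many pairs $(i,j)\in\{1,\dots,r\}\times\{1,\dots,s\}$ delivers the conclusion: at any $\lambda=(x,y)\in C(\C)$, if $f(z)+x$ is PCF then every $c_i$ is preperiodic for $\bff_\lambda$, so (using the equivalence with any fixed $d_j$) every $d_j$ is preperiodic for $\bfg_\lambda$, whence $g(z)+y$ is PCF; the converse is symmetric. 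All the genuine difficulty has been absorbed into Theorem~\ref{second main} (which supplies the dynamical rigidity needed to promote a Zariski-dense coincidence of simultaneous preperiodicity to an identity on all of $\bar C(\C)$); the remaining work in Theorem~\ref{PCF maps} is the combinatorial observation that ``PCF'' for an additive-parameter polynomial family decouples over the fixed finite sets of critical points of $f$ and $g$.
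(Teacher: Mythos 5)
Your proof is correct and follows essentially the same route as the paper: reduce to pairs of critical points of $f$ and $g$ (which, as you note, are independent of the additive parameter), observe that $\infty$ is superattracting so only the finite critical points matter, and then invoke the one-dimensional rigidity statement once per pair of critical points. The paper applies Theorem~\ref{general main} directly with $\bfc_1 = c_i/1$, $\bfc_2 = d_j/1$, whereas you route through Theorem~\ref{second main} (which is itself the same specialization of Theorem~\ref{general main}); you are in fact slightly more careful than the paper in that you flag the degenerate case where $\bar C$ is a coordinate line and one of $\Phi,\Psi$ is constant, a situation the paper's proof does not acknowledge.
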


It is not clear, in general, how many of the results above should
carry over into higher-dimensional situations.  As another application
of the techniques developed in this paper, we are able to prove a
first result regarding \emph{unlikely intersections} for algebraic
dynamics in higher dimensions.

\begin{thm}
\label{first higher}
Let $P(X,Z)\in \Qbar[X,Z]$ and $Q(Y,Z)\in\Qbar[Y,Z]$ be homogeneous polynomials of degree $d\ge 3$ and assume that $P(X,0)$ and $Q(Y,0)$ are nonzero. Let $\bff_{\l,\mu}:\bP^2\lra \bP^2$ be the $2$-parameter family defined by
$$\bff_{\l,\mu}([X:Y:Z])=[P(X,Z)+\l YZ^{d-1}:Q(Y,Z)+\mu XZ^{d-1} :Z^d].$$
Let $a_i,b_i\in\Qbar^*$ (for $i=1,2$). If there exists a set of points $[\l:\mu:1]$ which is Zariski dense in $\bP^2$ such that for each such pairs $(\l,\mu)$  both $[a_1:b_1:1]$ and $[a_2:b_2:1]$ are preperiodic for $\bff_{\l,\mu}$ then for each $\l,\mu\in\Qbar$, $[a_1:b_1:1]$ is preperiodic for $\bff_{\l,\mu}$ if and only if $[a_2:b_2:1]$ is preperiodic for $\bff_{\l,\mu}$.
\end{thm}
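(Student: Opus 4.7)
The plan is to generalize the equidistribution/adelic-metric strategy of Baker-DeMarco \cite{Matt-Laura} and the authors' earlier work from one-dimensional parameter spaces to the two-dimensional parameter space here, replacing the Berkovich equidistribution results of \cite{Baker-Rumely, CL, FRL} by Yuan's higher-dimensional equidistribution theorem \cite{Yuan} together with the Yuan-Zhang rigidity theorem \cite{YZ}. Compactify the parameter space to $\bP^2$ with affine coordinates $(\l,\mu)$. A first observation is that $\bff_{\l,\mu}:\bP^2\to\bP^2$ is a morphism for every $(\l,\mu)$: if $Z=0$, the three defining forms reduce to $P(X,0)$, $Q(Y,0)$, and $0$, and the hypothesis that $P(X,0)$ and $Q(Y,0)$ are nonzero forces $X=Y=0$, so there is no common zero in $\bP^2$. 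Consequently $\bff_{\l,\mu}^\ast\mathcal O(1)\cong\mathcal O(d)$ for every $(\l,\mu)$, and there is a well-defined canonical height $\hhat_{\bff_{\l,\mu}}$ on $\bP^2(\Qbar)$ whose zero set is exactly the preperiodic locus.

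For each marked point $P_i:=[a_i:b_i:1]$ ($i=1,2$), set $H_i(\l,\mu):=\hhat_{\bff_{\l,\mu}}(P_i)$. The principal step is to construct, via the Tate telescoping limit $H_i(\l,\mu)=\lim_{n\to\infty} d^{-n}h(\bff_{\l,\mu}^n(P_i))$, a semipositive continuous adelic metrized line bundle $\overline{\mathcal M}_i$ on the compactified parameter space $\bP^2$ whose associated height function agrees with $H_i$ on $\bA^2(\Qbar)$. The underlying line bundle will be a positive multiple of $\mathcal O(1)$ on $\bP^2$ whose degree is determined by the growth rate of the coordinate polynomials of $\bff_{\l,\mu}^n(P_i)$ in $(\l,\mu)$. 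This Call-Silverman-type construction, now over a two-dimensional base, is the main technical obstacle: one must control the Tate limits as $(\l,\mu)$ approaches the line at infinity of $\bP^2$ and verify continuity together with plurisubharmonicity of the resulting adelic metric at each place, generalizing the estimates carried out earlier in this paper for one-parameter families of rational maps.

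With $\overline{\mathcal M}_1$ and $\overline{\mathcal M}_2$ in hand, the conclusion follows by equidistribution. The hypothesis furnishes a set $\Sigma$ of parameters in $\bA^2$ on which both $P_1$ and $P_2$ are preperiodic and which is Zariski-dense in $\bP^2$. Since the preperiodicity locus of each $P_i$ is a countable union of $\Qbar$-defined closed subvarieties of $\bP^2$, the subset $\Sigma\cap\bA^2(\Qbar)$ is itself Zariski-dense. On this subset $H_1=H_2=0$, so the essential minimum of each $\overline{\mathcal M}_i$ on $\bP^2$ vanishes. Choose a generic sequence in $\Sigma\cap\bA^2(\Qbar)$ and apply Yuan's theorem at each place $v$: the Galois orbits equidistribute, at $v$, to the normalized probability measure attached to $c_1(\overline{\mathcal M}_1)^2$ and, simultaneously, to the one attached to $c_1(\overline{\mathcal M}_2)^2$, so these measures coincide at every $v$. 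The Yuan-Zhang rigidity theorem \cite{YZ} then forces (after rescaling to equalize the underlying algebraic line bundles) the adelic metrics of $\overline{\mathcal M}_1$ and $\overline{\mathcal M}_2$ to agree up to a constant; equivalently, the local canonical height functions $g_{i,v}(\l,\mu)$ attached to the marked points coincide for $i=1,2$ at every place. The zero set of $g_{i,v}$ is precisely the set of parameters $(\l,\mu)$ for which the orbit of $P_i$ under $\bff_{\l,\mu}$ is $v$-adically bounded; since preperiodicity of $P_i$ at a $\Qbar$-parameter is equivalent to boundedness of its orbit at every place, the equality of the $g_{i,v}$ yields: for each $(\l,\mu)\in\Qbar^2$, $P_1$ is preperiodic for $\bff_{\l,\mu}$ if and only if $P_2$ is, which is the desired conclusion.
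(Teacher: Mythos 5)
Your outline has the right architecture, and it is essentially the same one the paper uses: compactify the parameter space to $\bP^2$, build for each marked point a semipositive adelic metrized line bundle on $\bP^2$ whose height computes $\hhat_{\bff_{\l,\mu}}(P_i)$, and then combine Yuan's equidistribution theorem with the Yuan--Zhang rigidity theorem via Corollary~\ref{to-use}. The opening observation that $\bff_{\l,\mu}$ is a morphism of $\bP^2$ for every $(\l,\mu)$ because $P(X,0)$ and $Q(Y,0)$ are nonzero is correct and is indeed used. However, the step you label ``the main technical obstacle'' is not subsequently overcome; it \emph{is} the proof, and the proposal supplies no argument for it. Two concrete ideas are missing. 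First, you give no candidate metric along the boundary line $t_2=0$ of the compactification, nor a reason your $\overline{\mathcal M}_i$ is semipositive. The paper's device is to homogenize the coordinate polynomials $A_n(\l,\mu), B_n(\l,\mu)$ of $\bff_{\l,\mu}^n([a:b:1])$ to $\tilde A_n, \tilde B_n$ and observe that $\theta_n: [t_0:t_1:t_2]\mapsto[\tilde A_n:\tilde B_n:t_2^{d^{n-1}}]$ is an honest endomorphism of $\bP^2$ (because $\tilde A_n(t_0,t_1,0)=c_P^{(d^n-1)/(d-1)}t_0^{d^{n-1}}$ and $\tilde B_n(t_0,t_1,0)=c_Q^{(d^n-1)/(d-1)}t_1^{d^{n-1}}$ have no common zero with $t_2^{d^{n-1}}$ when $d\ge 2$); the $n$-th metric is then the $d^{n-1}$-th root of $\theta_n^*$ of the Fubini--Study/sup metric, so semipositivity is automatic and the metric is defined on all of $\bP^2$, including at infinity. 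Second, uniform convergence of these metrics --- which is what makes the limit continuous and lets Corollary~\ref{to-use} apply --- requires $v$-adic estimates of the form $C^{-1}\le M_{n+1}(\l,\mu)/M_n(\l,\mu)^d\le C$ uniformly in $n$ and $(\l,\mu)$; the paper splits into the region $\max(|\l|_v,|\mu|_v)\le L$ (Lemma~\ref{again in a finite ball}) and the region $\max(|\l|_v,|\mu|_v)>L$ (Lemma~\ref{outside the finite ball}), and the second case is not a routine repetition of the one-dimensional argument: it needs $a,b\neq 0$ to ensure that already $A_1,B_1$ are $v$-adically large when $\l$ or $\mu$ is large, and a bootstrapping inequality to propagate this. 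Saying one must ``control'' and ``verify continuity together with plurisubharmonicity'' does not do this work.

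A smaller point: your concluding step detours through local canonical heights and $v$-adic boundedness of orbits. The paper argues more directly: after computing $\hhat_{\bff}([a:b:1])=1/d$ over the function field $\Qbar(\l,\mu)$, it records $h_{\overline L}([\l:\mu:1]) = d\cdot \hhat_{\bff_{\l,\mu}}([a:b:1])$, applies Corollary~\ref{to-use} to get equality of the two \emph{global} canonical heights, and then uses Northcott (canonical height zero if and only if preperiodic) to finish. Your route can be made to work, but it conflates the equality of metrics with the equality of local canonical heights without justification.
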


We sketch briefly the ideas for proving Theorem~\ref{general
  main}. Let $\{\bff_{\l}\}$ be an algebraic family of rational maps
on the projective line $\bP^1$ parameterized by points $\l\in
Y(\Kbar)$ where $Y$ is an affine subset of the algebraic curve $X$
over a number field $K.$ As mentioned above, we apply recent results
of Yuan \cite{Yuan} and Yuan-Zhang \cite{YZ} to our situation.  The
main result of \cite{Yuan} shows that points of small height with
respect to a semipositive adelic metrized line bundle equidistribute
with respect to the measures induced by this semipositive adelic metrized
line bundle; the main result of \cite{YZ} says that when semipositive
metrics on a line bundle induce the same measures at a place, they
must differ by a constant. Taken together, these results say, roughly,
that if two appropriate height functions on a variety $X$ that come
from semipositive adelic metrized line bundles share a Zariski dense
family of points of small heights, then the two height functions must
be exactly the same. For a given family of points $\{\bfc_\l\}$, we
consider the canonical heights $\hhat_{\bff_\l}(\bfc_\l)$ of $\bfc_\l$
associated to the map $\bff_\l$.  A key observation is that under
appropriate conditions, a suitable multiple (depending on $\bfc$) of
$\hhat_{\bff_\l}(\bfc_\l)$ induces a height function $h_{\bfc}$ coming
from a metrized line bundle on $X$ (see
Section~\ref{subsec:specialization} for details). It follows that
$\bfc_\l$ is a preperiodic point for $\bff_\l$ if and only if
$h_{\bfc}(\l) = 0$.  Now, let $\bfc_1$ and $\bfc_2$ be two given
families of points.  For simplicity, here we only consider one family
of rational maps ($\bff_1 = \bff_2$ in the statement of the theorem)
on $\bP^1$ and assume that there are infinitely many $\l\in Y(\Kbar)$
such that both $\bfc_{1,\l}$ and $\bfc_{2,\l}$ are preperiodic for
$\bff_\l.$ Let $h_{\bfc_i}, i=1, 2$ be the corresponding heights on
$X$ induced from $\hhat_{\bff_\l}(\bfc_{i,\l}), i=1, 2$
respectively. Then, the infinite set of parameters $\l$ such that both
$\bfc_{1,\l}$ and $\bfc_{2,\l}$ are preperiodic points for $\bff_\l$
yields a Zariski dense set of small points on $X$. Using the results
of Yuan \cite{Yuan} and Yuan-Zhang \cite{YZ} mentioned above, we
conclude that the two height functions $h_{\bfc_1}$ and $h_{\bfc_2}$
are actually equal. Then we deduce that
$\hhat_{\bff_{\l}}(\bfc_{1,\l}) = 0$ if and only if
$\hhat_{\bff_{\l}}(\bfc_{2,\l}) = 0$ which concludes the proof of
Theorem~\ref{general main}.  Thus, our strategy follows that of
\cite{Matt-Laura}, but in the language of adelic metrized line bundles
rather than Green functions.

As a consequence of our method, using the notation from the previous paragraph, we prove that 
\begin{equation}
\label{important formula specialization}
\hhat_{\bff_\l}(\bfc(\l))=\hhat_\bff(\bfc)\cdot h_\bfc(\l)+O(1),
\end{equation}
where $\hhat_\bff(\bfc)$ is the canonical height of $\bfc\in \bP^1(F)$
under the action of $\bff:\bP^1\lra \bP^1$ (where $F=K(X)$). Formula
\eqref{important formula specialization} (which is proved in
Theorem~\ref{thm:uniform convergence}) extends a recent result of
Ingram \cite{ingram10} to certain families of rational maps. See Section~\ref{subsec:specialization} for the statement of Theorem~\ref{thm:uniform convergence} and a discussion of Ingram's result \cite{ingram10} (which in turn extends previous results of Silverman \cite{Silverman83} and Call-Silverman \cite{Call-Silverman}).

The plan of our paper is as follows. In Section~\ref{statements} we
state Theorem~\ref{general main} (which generalizes
Theorem~\ref{second main}) and also state few of its consequences.  We
also make a conjecture that would generalize the results of this paper
and place it in a more natural context.   Section~\ref{notation}
describes the notation that is used throughout the paper.  
Then, in Section~\ref{subsec:metrized line bundle} we introduce metrized line
bundles and state Yuan's \cite{Yuan} equidistribution result and
Yuan-Zhang's \cite{YZ} ``Calabi-Yau'' result on metrics that give rise
to the same measure. Section~\ref{equidistribution} is devoted to setting up our
problem so that the results from the previous section can be
applied. In Sections~\ref{specializations} and \ref{metrics} we prove
that our metrics satisfy the necessary hypotheses that allow us to use
the results of \cite{Yuan} and \cite{YZ}.   Section~\ref{conclusion}
contains a proof of
Theorem~\ref{general main} and of its consequences. In
Section~\ref{higher} we prove Theorem~\ref{first higher}. 

\begin{acknowledgments} 
Part of this paper was completed during the semester program ``Complex
and Arithmetic  Dynamics'' at ICERM in spring 2012. 
  The authors would like to thank ICERM for the support on
  participating the semester program. We also thank Matt Baker, Laura DeMarco, and Umberto Zannier for several conversations around this paper.
  
\end{acknowledgments}


\section{Statement of the main results}
\label{statements}


We make the following conjecture.  

\begin{conjecture}
\label{second conjecture}
Let $Y$ be any quasiprojective curve defined over $\Qbar$, and let $F$ be
the function field of $Y$.  Let $\bfa,\bfb\in\bP^1(F)$, and let
$V\subset \cX:=\bP^1_F\times_F\bP^1_F$ be the $\Qbar$-curve $(\bfa,\bfb)$.
Let $\bff:\bP^1\lra \bP^1$ be a rational map of degree $d \ge 2$
defined over $F$.  If there exists an infinite sequence of points $\l_n\in Y(\Qbar)$ such that 
$\lim_{n\to\infty}\hhat_{\bff_{\l_n}}(\bfa(\l_n)) = \lim_{n\to\infty}\hhat_{\bff_{\l_n}}(\bfb(\l_n))=0$, then  $V$ is contained in a proper
preperiodic subvariety of $\cX$ under the action of
$\Phi:=(\bff,\bff)$.
\end{conjecture}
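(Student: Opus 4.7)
The plan is to mimic the proof of Theorem~\ref{general main}, but replacing the explicit form $P(x)/Q(x)+\Phi(\l)$ of the family by a general $\bff$ defined over $F$. First I would attach to each of $\bfa,\bfb\in\bP^1(F)$ a semipositive adelic metrized line bundle on the smooth projective completion $X$ of $Y$ whose height function on $X(\Qbar)$ encodes $\hhat_{\bff_\l}(\bfa(\l))$ and $\hhat_{\bff_\l}(\bfb(\l))$ respectively, via a specialization formula in the spirit of~\eqref{important formula specialization}. This construction requires first extending Theorem~\ref{thm:uniform convergence} to general families $\bff$ defined over $F$, so that the correction term is a bounded function on $X(\Qbar)$ rather than an error that grows with the height of $\l$. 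Once this is in place, the hypothesis $\hhat_{\bff_{\l_n}}(\bfa(\l_n)),\hhat_{\bff_{\l_n}}(\bfb(\l_n))\to 0$ along a Zariski-dense sequence supplies a common sequence of small points for both metrized line bundles.

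The next step splits according to the function-field canonical heights. If $\hhat_\bff(\bfa)=0$, then by standard function-field arguments $\bfa$ is preperiodic for $\bff$ over $F$, so there exist $m>n\ge 0$ with $\bff^m(\bfa)=\bff^n(\bfa)$; the subvariety $W=\{(x,y)\in\cX\col \bff^m(x)=\bff^n(x)\}$ is a proper preperiodic subvariety of $\cX$ under $\Phi=(\bff,\bff)$ and manifestly contains $V$, so we are done. A parallel argument handles $\hhat_\bff(\bfb)=0$. Moreover, if only one of $\hhat_\bff(\bfa),\hhat_\bff(\bfb)$ vanishes, the specialization formula forces a contradiction with the small-heights hypothesis, ruling that case out.

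Assume therefore that $\hhat_\bff(\bfa),\hhat_\bff(\bfb)>0$. I would then apply Yuan's equidistribution theorem~\cite{Yuan} at each place $v$ of $K$ to the sequence $\{\l_n\}$, once with each metrized line bundle, and invoke the Yuan--Zhang ``Calabi--Yau'' theorem~\cite{YZ} to conclude that the two measures on $X_v^{\mathrm{an}}$ obtained from the two bundles agree after normalization. This forces the two adelic metrics to differ by a constant multiple; translated back via the specialization formula it yields an equality between the local dynamical Green functions attached to $\bfa$ and to $\bfb$ at every place of $F$ under iteration by $\bff$.

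The main obstacle is the final passage from ``proportional local Green functions at every place'' to the algebraic conclusion that $V$ sits inside a proper preperiodic subvariety of $\cX$. This is precisely a dynamical Manin--Mumford / Bogomolov-type statement for the diagonal action $(\bff,\bff)$ on $\bP^1\times\bP^1$: a classification of preperiodic subvarieties of $\bP^1\times\bP^1$ under such split actions is available (via Medvedev--Scanlon) when $\bff$ specializes to a polynomial that is not conjugate to a power map or a Chebyshev map, but remains open in general. Thus under this strategy a full proof of Conjecture~\ref{second conjecture} effectively reduces to a relative version of the dynamical Manin--Mumford conjecture for $(\bff,\bff)$, which is the principal difficulty.
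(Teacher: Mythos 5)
This statement is labeled in the paper as a \emph{conjecture}, and the paper does not claim to prove it; the theorems the authors actually establish (Theorems~\ref{second main}, \ref{general main}, \ref{first higher}) are special cases with strong structural hypotheses on the family (constant resultant, constant leading coefficients, superattracting fixed point at infinity). So there is no proof in the paper to compare yours against; what follows is an assessment of your proposal on its own terms.

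Your sketch is a faithful extrapolation of the paper's machinery, and you correctly identify the central open ingredient: reducing the conclusion ``$V$ is contained in a proper preperiodic subvariety of $\cX$'' to a relative dynamical Manin--Mumford/Bogomolov statement for $(\bff,\bff)$ on $\bP^1\times\bP^1$, which is not available in general. That honest acknowledgment is the crux, and it is well placed. However, there are at least two additional gaps beyond the one you name. First, the step ``if $\hhat_\bff(\bfa)=0$ then by standard function-field arguments $\bfa$ is preperiodic for $\bff$ over $F$'' is false without further hypotheses: over a function field, canonical height zero does \emph{not} imply preperiodicity (the paper notes this explicitly in Section~\ref{notation}: if $\bff$ and $\bfa$ descend to the constant subfield, then $\hhat_\bff(\bfa)=0$ although $\bfa$ can have infinite forward orbit). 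Handling the isotrivial/constant case is a genuine obstruction in its own right, not a routine reduction, and your proposed proper preperiodic subvariety $W=\{\bff^m(x)=\bff^n(x)\}$ does not exist in that case. Second, extending Theorem~\ref{thm:uniform convergence} to an arbitrary $\bff$ of degree $\ge 2$ over $F$ is nontrivial; the paper's proof of uniform convergence of the metrics $\|\cdot\|_{v,n}$ (Propositions~\ref{lem:finite uniform}, \ref{lem:infinite uniform}, \ref{6 uniform large l}) leans heavily on the superattracting fixed point at infinity, the constant resultant, and the constant leading coefficients, and Ingram's polynomial result does not cover general rational families either; without these hypotheses the $O(1)$ control in~\eqref{important formula specialization} is not known and the adelic metric construction may fail. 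In short: your outline correctly traces the contours of the intended approach and correctly flags the Manin--Mumford obstruction, but it also passes over the isotrivial case and the metric-construction difficulties, both of which are substantive.
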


Note that in the conjecture above, $\bff$ induces a well-defined
rational map $\bff_{\l}:\bP^1\lra \bP^1$ defined over $\Qbar$ for all
but finitely many $\l\in Y(\Qbar)$; as usual, $\hhat_{\bff_{\l}}$ is 
the (global) canonical height corresponding to the rational map $\bff_{\l}$.     
One may also phrase a ``Manin-Mumford''-type conjecture along the
lines of Conjecture ~\eqref{second conjecture}, which might hold for
preperiodic points over
$\bC$ (where one cannot define a height function) rather than
$\Qbar$. We note that one cannot extend the above Conjecture to actions of two arbitrary families $(\bff_\l,\bfg_\l):\bP^1\times\bP^1\lra \bP^1\times \bP^1$ (see the family of counterexamples from \cite{GT-IMRN}).

Recall that a point $P \in \PP^1$ is a superattracting
periodic point for $\bff$ if $P$ is  periodic of period $n$  for
$\bff$ and that $(\bff^n)'(P) = 0.$

\begin{thm}
  \label{general main}
  Let $K$ be a number field and let $X$
  be a smooth projective curve defined over $K$. Let $\eta\in X(K)$
  and let $Y:=X\setminus\{\eta\}$.  We let $\bfA$ be the ring of
  rational functions on $X$ which are regular on $Y$, defined over
  $K$. For each $\bfa\in \bfA$ we denote by $\deg(\bfa) =
  -\ord_{\eta}(\bfa)$ where $\ord_{\eta}$ is the order of the pole $\eta$ for the function $\bfa$.

Suppose that we have rational functions $\bff_1 = P_1(x)/Q_1(x)$ and
$\bff_2 = P_2(x)/Q_2(x)$ such that  $P_i, Q_i\in \bfA[x]$ and the
leading coefficients of $P_i$ and of $Q_i$ are  nonzero constants for
$i=1,2$. Furthermore, assume that $\bff_1$ and $\bff_2$ satisfy the following
conditions. 
\begin{enumerate} 
\item The resultant $R(\bff_i) := \Res(P_i(x), Q_i(x); x)$ of $P_i(x)$ and
  $Q_i(x)$ is a non-zero constant (i.e. $R(\bff_i) \in   K^{\ast}$). 
\item The point $x = \infty$ is a superattracting fixed point for both
  $\bff_1$ and $\bff_2.$
\end{enumerate}
For each $\l\in Y(\Kbar)$ and $i=1,2$, we denote by $\bff_{\l,i}$ the
rational function obtained by  evaluating  each coefficient of $P_i$
and of $Q_i$ at $\l$. We denote by $\hhat_{\bff_{\l,i}}$ the canonical
height associated to the function $\bff_{\l,i}$. 

Let $\bfc_i=\frac{\bfa_i}{\bfb_i}$ where
$\bfa_i, \bfb_i\in \bfA$ and
\begin{equation}
\label{relative prime again}
(\bfa_i,\bfb_i)=\bfA\text{ for }i=1,2,
\end{equation}
and and suppose that the two sequences $\{\deg(\bff_i^n(\bfc_i))\mid n
\in \N\}$ are not bounded.  If there exists an infinite family of
$\l_n\in Y(\Kbar)$ such that
$$\lim_{n\to\infty} \hhat_{\bff_{\l_n,1}}(\bfc_1(\l_n))=\lim_{n\to\infty} \hhat_{\bff_{\l_n,2}}(\bfc_2(\l_n)) =0,$$ 
then for all $\l \in Y(\Kbar)$, we have that $\hhat_{\bff_{\l,1}}(\bfc_1(\l))=0$  if and only if 
  $\hhat_{\bff_{\l,2}}(\bfc_2(\l))=0$.
\end{thm}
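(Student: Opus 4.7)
The plan is to attach to each pair $(\bff_i,\bfc_i)$ (for $i=1,2$) a semipositive adelically metrized $\Q$-line bundle $\overline{\cL_i}$ on $X$ whose induced height detects preperiodicity of $\bfc_i(\l)$ under $\bff_{\l,i}$; Yuan's equidistribution theorem together with the Yuan-Zhang Calabi-Yau result will then force the two heights to be proportional, from which the zero-locus identity falls out. This mirrors the strategy of \cite{Matt-Laura, prep}, but phrased in the language of adelic metrized line bundles rather than Green functions.

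\emph{Step 1 (construction and specialization).} For each $i$ and each $n \ge 0$, the iterate $\bff_i^n(\bfc_i) \in K(X)$ defines a morphism $\psi_{i,n}:X\to\bP^1$. Pulling back $\cO(1)$ equipped with its standard adelic metric and dividing by $d^n$, where $d=\deg\bff_i$, produces adelically metrized $\Q$-line bundles $\overline{\cL_{i,n}}$ on $X$. I would show that $\overline{\cL_{i,n}}$ converges, uniformly on the Berkovich analytification $X^{\mathrm{an}}_v$ at every place $v$ of $K$, to a semipositive adelic metric on a limit $\Q$-line bundle $\overline{\cL_i}$, with $\deg\cL_i>0$ by the unboundedness of $\{\deg \bff_i^n(\bfc_i)\}_n$. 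The crucial inputs are the two standing hypotheses: $R(\bff_i)\in K^*$ prevents $\bff_{\l,i}$ from dropping degree at any $\l$ (a uniform good-reduction statement across parameters), while the superattracting-$\infty$ assumption makes $\bff_i$ look like $z\mapsto z^d$ near the pole $\eta$ and produces uniform contraction of escape rates. These facts underlie the specialization formula announced as Theorem~\ref{thm:uniform convergence} and yield
$$h_{\overline{\cL_i}}(\l) = \hhat_{\bff_{\l,i}}(\bfc_i(\l)) \qquad \text{for every } \l\in Y(\Kbar),$$
so that $h_{\overline{\cL_i}}(\l)=0$ if and only if $\bfc_i(\l)$ is preperiodic for $\bff_{\l,i}$.

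\emph{Step 2 (equidistribution and Calabi-Yau).} The hypothesis supplies an infinite, hence Zariski dense, sequence $\l_n\in Y(\Kbar)$ with $h_{\overline{\cL_i}}(\l_n)\to 0$ for both $i$. Applying Yuan's theorem \cite{Yuan} to $\overline{\cL_1}$ and to $\overline{\cL_2}$ separately, the Galois orbits of $\l_n$ equidistribute at every place $v$ of $K$ with respect to both normalized measures $c_1(\overline{\cL_i})_v/\deg\cL_i$, which therefore must coincide as probability measures. The Yuan-Zhang Calabi-Yau theorem \cite{YZ} then forces the semipositive metrics on suitable common tensor powers of $\cL_1$ and $\cL_2$ to differ by place-wise constants, so that
$$\deg(\cL_2)\, h_{\overline{\cL_1}}(\l) - \deg(\cL_1)\, h_{\overline{\cL_2}}(\l) = C$$
is a constant function of $\l$; evaluating along the small sequence $\l_n$ forces $C=0$. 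Since both degrees are strictly positive, this proportionality together with Step~1 yields the desired equivalence $\hhat_{\bff_{\l,1}}(\bfc_1(\l))=0 \iff \hhat_{\bff_{\l,2}}(\bfc_2(\l))=0$.

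\emph{Main obstacle.} The analytic heart of the argument is Step~1. Archimedean semipositivity of the limit metric is essentially Fatou-Julia theory applied to the $\bff_{\l,i}$-Green functions on $\bP^1(\C)$, but at the non-archimedean places one must work on Berkovich $\bP^1$ and prove \emph{uniform} control of the escape rate across all parameters $\l\in X^{\mathrm{an}}_v$. This is precisely where the hypotheses $R(\bff_i)\in K^*$ and superattracting $\infty$ do their work, and it is also the place where the genuinely new technical content lies: extending Ingram's \cite{ingram10} specialization inequality from polynomials to rational maps. Once that foundation is in place, Step~2 is a largely formal application of \cite{Yuan} and \cite{YZ}.
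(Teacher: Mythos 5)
Your proposal follows essentially the same route as the paper. The one cosmetic difference is normalization: you work with $\Q$-line bundles $\cL_1, \cL_2$ of degrees $\hhat_{\bff_1}(\bfc_1)$ and $\hhat_{\bff_2}(\bfc_2)$ and then compare via tensor powers, whereas the paper uses Lemma~\ref{all pullbacks are the same} to identify $\bfc_i^*\cO_{\bP^1}(1)$ with a multiple of the degree-one class $L_\eta$, then divides by $\hhat_{\bff_i}(\bfc_i)$ so that both heights $h_{\bfc_1},h_{\bfc_2}$ live on the \emph{same} underlying bundle $\overline{L_\eta}$; this makes Corollary~\ref{to-use} apply verbatim and eliminates the tensor-power bookkeeping. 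You correctly identify the technical core as the uniform convergence of the metrics $\|\cdot\|_{v,n}$ across all parameters (the paper's Propositions~\ref{lem:finite uniform}, \ref{6 uniform large l}, \ref{finally uniformity}), and you correctly locate the roles of the constant-resultant and superattracting-$\infty$ hypotheses; one detail worth making explicit is that the unboundedness of $\{\deg\bff_i^n(\bfc_i)\}$ is used not merely to get $\deg\cL_i>0$ but to allow replacing $\bfc_i$ by a high iterate so that $\deg\bfc_i > m_1+m_2$, which is what triggers the escape-rate estimates near $\eta$.
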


\begin{remarks}
  \begin{enumerate}
    \item[(1)] Condition~(2) in Theorem~\ref{general main} is
      equivalent to that $\deg_x P_i(x) \ge \deg_x Q_i(x) + 2$ for both
      $i =1,2.$ 
\item[(2)] Theorem~\ref{general main} yields that under the above hypotheses, $\bfc_1(\l)$ is preperiodic under $\bff_{\l,1}$ if and only if $\bfc_2(\l)$ is preperiodic under $\bff_{\l,2}$ since a point has canonical height equal to $0$ if and only it is preperiodic.
\item[(3)] We believe that Theorem~\ref{general main} should hold under more general hypotheses, i.e., the conclusion should still hold as long as neither $\bfc_1$ nor $\bfc_2$ is a (persistent) preperiodic point for $\bff_{1}$, respectively for $\bff_{2}$. In particular, Theorem~\ref{general main} should hold also for families of Latt\`{e}s maps associated to multiplication-by-$2$ on the elliptic curves $E_{\l}$, in which case one would establish a Bogomolov type result for the main theorems of Masser and Zannier from \cite{M-Z-1, M-Z-2, M-Z-3}.
\end{enumerate}
\end{remarks}

We have the following corollary (for more consequences, see our Section~\ref{conclusion}).

\begin{cor}
\label{cor 2}
Let $P_i,Q_i,R_i\in\Qbar[x]$ be nonconstant polynomials such that $\deg(P_i)>\deg(Q_i)+\deg(R_i)$ for $i=1,2$. Let $c_1,c_2\in\Qbar$ such that $c_1$ is preperiodic under the action of $P_1(x)/Q_1(x)$, while $c_2$ is not preperiodic under the action of $P_2(x)/Q_2(x)$. Then for any two nonconstant  polynomials $g_1,g_2\in\Qbar[x]$  such that $g_1(0)=g_2(0)=0$,   there exist at most finitely many $\l\in\Qbar$ such that both $g_1(\l)+c_1$ and $g_2(\l)+c_2$ are preperiodic under the actions of $P_1(x)/Q_1(x)+\l\cdot R_1(x)$, respectively of $P_2(x)/Q_2(x)+\l\cdot R_2(x)$.
\end{cor}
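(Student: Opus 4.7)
The plan is to recast Corollary \ref{cor 2} as an instance of Theorem \ref{general main} and extract the contradiction by specializing the resulting bi-implication at $\lambda = 0$.

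\emph{Setup.} Let $K$ be a number field containing all coefficients of $P_i, Q_i, R_i, g_i$ and $c_i$. I take $X = \bP^1_K$, $\eta = \infty$, $Y = \bA^1_K$, $\bfA = K[\lambda]$, and $F = K(\lambda)$. Write
$$\bff_i(x) = \frac{P_i(x)}{Q_i(x)} + \lambda R_i(x) = \frac{\tilde P_i(x)}{Q_i(x)}, \qquad \tilde P_i := P_i + \lambda R_i Q_i \in \bfA[x],$$
and take the sections $\bfc_i := g_i(\lambda) + c_i$ with trivial coprime representation $\bfa_i/\bfb_i = (g_i + c_i)/1$. The structural hypotheses of Theorem \ref{general main} are quickly verified: since $\deg P_i > \deg(R_i Q_i)$, the leading $x$-coefficient of $\tilde P_i$ is that of $P_i$, a nonzero constant in $K$; after reducing $P_i/Q_i$ to lowest terms, the identity $\Res(\tilde P_i, Q_i;x) = \Res(P_i, Q_i;x) \in K^\ast$ holds (adding a multiple of $Q_i$ preserves the resultant since the $x$-degree is unchanged); and $\deg \tilde P_i = \deg P_i \geq \deg Q_i + 2$ (as $\deg R_i \geq 1$), giving the superattracting-fixed-point condition at $x = \infty$.

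\emph{Main obstacle: unboundedness of $\deg(\bff_i^n(\bfc_i))$.} This is equivalent to showing $\bfc_i$ is not preperiodic for $\bff_i$ over $F$, for $i = 1,2$. For $\bfc_2$ this is immediate by specialization at $\lambda = 0$: an identity $\bff_2^N(\bfc_2) = \bff_2^M(\bfc_2)$ in $F$ would yield $(P_2/Q_2)^N(c_2) = (P_2/Q_2)^M(c_2)$, contradicting the hypothesis that $c_2$ is not preperiodic for $P_2/Q_2$. For $\bfc_1$, the $\lambda = 0$ specialization is consistent with preperiodicity (since $c_1$ \emph{is} preperiodic for $P_1/Q_1$), so I would instead run an escape-rate analysis at $\lambda_0 \in \C$ with $|\lambda_0|$ large: writing $d := \deg P_1$, $e := \deg Q_1$, $r := \deg R_1$, the superattracting fixed point at $\infty$ combined with $d - e > r \geq 1$ forces the filled Julia set of $\bff_{1,\lambda_0}$ into a disc of radius $O(|\lambda_0|^{1/(d-e-r)})$, while $|\bfc_1(\lambda_0)| \sim |\mathrm{lc}(g_1)|\,|\lambda_0|^{\deg g_1}$. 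Since $\deg g_1 \cdot (d-e-r) \geq 1$, the section escapes to infinity under $\bff_{1,\lambda_0}$ for $|\lambda_0|$ large, so $\bfc_1(\lambda_0)$ is not preperiodic and no identity $\bff_1^N(\bfc_1) = \bff_1^M(\bfc_1)$ can hold in $F$. The borderline equality case $\deg g_1 = 1 = d-e-r$ is the most delicate and forms the main technical difficulty: one must examine the first iterate (whose $\lambda_0$-degree is $r+1 \geq 2$ generically) and treat the rare configurations---like $P_1 = x^2, Q_1 = 1, R_1 = x, c_1 = 0, g_1(\lambda) = -\lambda$---where accidental leading-coefficient cancellations render $\bfc_1$ genuinely preperiodic over $F$. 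In those exceptional situations the first preperiodicity condition of the corollary is automatic for all $\lambda$, and the conclusion instead follows from an equidistribution-on-a-curve argument (in the spirit of the proof of Theorem \ref{general main}) applied to the single non-preperiodic section $\bfc_2$, forcing the set of $\lambda \in \Qbar$ with $\bfc_2(\lambda)$ preperiodic to be non-Zariski-dense in $\bA^1$, hence finite.

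\emph{Conclusion.} Outside the exceptional configurations, assume for contradiction that infinitely many $\lambda \in \Qbar$ satisfy both preperiodicity conditions; then $\hhat_{\bff_{\lambda,i}}(\bfc_i(\lambda)) = 0$ along this infinite sequence, and Theorem \ref{general main} yields the bi-implication
$$\hhat_{\bff_{\lambda,1}}(\bfc_1(\lambda)) = 0 \ \Leftrightarrow\ \hhat_{\bff_{\lambda,2}}(\bfc_2(\lambda)) = 0 \qquad \text{for every } \lambda \in Y(\Kbar).$$
Evaluating at $\lambda = 0$ and using $g_i(0) = 0$ gives $\bfc_i(0) = c_i$ and $\bff_{\lambda=0,i} = P_i/Q_i$, so the bi-implication becomes ``$c_1$ is preperiodic for $P_1/Q_1$ iff $c_2$ is preperiodic for $P_2/Q_2$'': the left side holds by hypothesis while the right side fails, contradicting the assumption of infinitely many bad $\lambda$. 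Hence only finitely many $\lambda$ can satisfy both preperiodicity conditions, as claimed.
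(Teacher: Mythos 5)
Your core argument --- cast the data as an instance of Theorem~\ref{general main}, obtain the bi-implication via Corollary~\ref{cor 1}, and specialize at $\lambda=0$ where $g_1(0)=g_2(0)=0$ forces $c_1$ preperiodic for $P_1/Q_1$ but $c_2$ not preperiodic for $P_2/Q_2$ --- is precisely the paper's proof. The difference lies entirely in how carefully the hypotheses of Theorem~\ref{general main} are checked; the paper disposes of this in one sentence (``by our assumption on the degrees \ldots\ the conditions in Theorem~\ref{general main} are satisfied''), whereas you devote most of your effort to the unboundedness of $\{\deg(\bff_i^n(\bfc_i))\}$ and correctly locate the delicate subcase $\deg g_1 = 1$, $\deg P_1 = \deg(R_1Q_1)+1$. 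You have identified a real point where the paper is terse.

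That said, your treatment of this point has its own problems. First, you assert without justification that unboundedness of the degree sequence is ``equivalent to'' $\bfc_i$ not being preperiodic over $F$; one direction is trivial, but the converse is not immediate from the paper's machinery and would require something like Baker's theorem for non-isotrivial rational maps over function fields, which is nowhere invoked. Second, your escape-rate analysis for $\bfc_1$ is a sketch, not a proof; in particular it establishes at best non-preperiodicity, which (per the previous point) does not by itself give the hypothesis you need. Third, and most concretely, the ``exceptional configuration'' $P_1=x^2$, $Q_1=1$, $R_1=x$, $c_1=0$, $g_1(\lambda)=-\lambda$ that you offer to demonstrate that $\bfc_1$ can be genuinely preperiodic over $F$ violates the hypotheses of the corollary: $Q_1=1$ is constant, whereas the statement requires all of $P_i,Q_i,R_i$ to be nonconstant. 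Indeed, when $Q_1$ is nonconstant, the resultant condition $\Res(P_1,Q_1)\in K^*$ prevents the first iterate $\bff_1(\bfc_1) = \tilde P_1(\bfc_1)/Q_1(\bfc_1)$ from collapsing to a constant in $\lambda$ (a zero of $Q_1(\bfc_1)$ in $\lambda$ cannot also be a zero of $P_1(\bfc_1)$), so the mechanism in your proposed example is specifically ruled out, and it is not established that the ``exceptional case'' branch of your argument is ever reached under the corollary's hypotheses. You should either prove it cannot occur, or produce a valid example; as written the proof forks into a case whose existence is unjustified and whose treatment rests on yet another unproved claim (finiteness via ``equidistribution on a curve'').
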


We present here a brief comparison of Theorem~\ref{general main} with the results from \cite{prep}. Firstly, our present method covers all the families of polynomials treated by the authors in \cite{prep}; however, it does not provide explicit relations between the starting points $\bfc_1$ and $\bfc_2$ as provided in \cite{prep} (see also \cite{Matt-Laura}).  The fact that in Theorems~\ref{second main} and \ref{general main} we do not obtain explicit relations between the starting points for the iterations (as obtained in \cite{Matt-Laura, prep})  is due to the fact that the analytic uniformization using Bottcher's Theorem (see \cite{Carleson-Gamelin}) cannot be used for giving an explicit formula for the local canonical height at an archimedean place for a rational map (which is \emph{not} totally ramified at infinity).


\section{Notation and Preliminaries}
\label{notation}

For any quasiprojective variety $X$ endowed with an endomorphism
$\Phi$, we call a point $x\in X$ \emph{preperiodic} if there exist two
distinct nonnegative integers $m$ and $n$ such that
$\Phi^m(x)=\Phi^n(x)$, where by $\Phi^i$ we always denote the
$i$-th iterate of the endomoprhism $\Phi$. If $x=\Phi^n(x)$ for some positive integer $n$, then $x$ is a \emph{periodic} point of \emph{period} $n$.

Let $K$ be a number field; we let $\Omega_K$ be the set of all absolute values  of $K$ which extend the (usual) absolute values of $\Q$. For each $v\in\Omega_K$, we let $v_0$ be the (unique) absolute value of $\Q$ such that $v|_\Q=v_0$ and we let $N_v:=[K_v:\Q_{v_0}]$. The (naive) Weil height of any point $x\in K$ is defined as
$$h(x)=\sum_{v\in\Omega_K}\frac{N_v}{[K:\Q]}\cdot \log\max\{1, |x|_v\}.$$
We will use the notation $\log^+(z)$ for $\log\max\{1,z\}$ for any real number $z$.

There exists a product formula for all nonzero elements $x$ of $K$, i.e., $$\prod_{v\in\Omega_K}|x|_v^{N_v}=1.$$

We fix an algebraic closure $\Kbar$ of $K$, and let $v \in \Omega_K$.
Let $\C_v$ be the completion of a fixed algebraic closure of the
completion of $(K,|\cdot |_v)$. When $v$ is an archimedean valuation,
then $\C_v=\C$.  We use the same notation $|\cdot |_v$ to denote the
extension of the absolute value of $(K_v,|\cdot |_v)$ to $\C_v$ and we
also fix an embedding of $\Kbar$ into $\C_v$.


Let $P=[x_0,\ldots, x_k]\in \PP^k(\Kbar)$ be given and let $P^{[1]}, 
\ldots, P^{[\ell]}$ denote the $\gal(\Kbar/K)$-conjugates of $P$. We let 
 $h_v(P) := \log \left(\max\{|x_0|_v, \ldots,|x_k|_v \}
 \right)$. Recall that the  Weil height of $P$ is given as follows.  
$$ h(P) := \frac{1}{\ell} \sum_{i=1}^\ell \sum_{v\in \Omega_K}\, N_v\,
h_v(P^{[i]}). 
$$
In this paper, we are primarily interested in points on the projective line ($k=1$). We fix an affine coordinate $z$ on $\PP^1$ and use the
identification $\PP^1(F) = F\cup \{\infty\}$ for any field $F.$ That
is, a point $x\in F$ is identified with the point $P = [x,1]\in \PP^1(F).$
The Weil height of $P = [x,1]$ is simply denoted by $h(x).$ 

Let $f\in K(x)$ be any rational map  of degree
$d\ge 2$.  In \cite{Call-Silverman}, Call and Silverman defined the \emph{global canonical height} $\hhat_f(x)$ for each $x\in\Kbar$ as
$$\hhat_f(x)=\lim_{n\to\infty} \frac{h(f^n(x))}{d^n}.$$
In addition, Call and Silverman proved that the global canonical height decomposes as a sum of the local canonical heights, i.e.
\begin{equation}
\label{decomposition for the global canonical height}
\hhat_f(x)=\frac{1}{[K(x):K]}\sum_{\sigma:K\lra \Kbar}\sum_{v\in\Omega_K}N_v \hhat_{f,v}\left(x^\sigma\right),
\end{equation}
where for each $v\in \Omega_{K}$ and for each $z\in\C_v$ we have
$$\hhat_{f,v}(z)=\lim_{n\to\infty}\frac{\log^+|f^n(z)|_v}{d^n}.$$
Using Northcott's Theorem one deduces that $x$ is preperiodic for $f$ if and only if $\hhat_f(x)=0$. This last statement does not hold if $K$ is a function field over a smaller field $K_0$ since $\hhat_f(x)=0$ for all $x\in K_0$ if $f$ is defined over $K_0$. 

We define heights in function fields similarly (see \cite{bg06, lang}). So, if $F$ is a function field of a projective normal variety $\cV$ defined over a field $K$ we denote by $\Omega_F$ the set of all absolute values on $F$ associated to the irreducible divisors of $\cV$. Then there exist positive integers $N_v$ (for each $v\in\Omega_F$) such that $\prod_{v\in\Omega_F}|x|_v^{N_v}=1$ for each nonzero $x\in F$. Also, we define the Weil height of any $P:=[x_0:\cdots :x_n]\in \bP^n(F)$ as 
$$h(P)=\sum_{v\in\Omega_F}N_v\cdot \log\left(\max\{|x_0|_v,\dots, |x_n|_v\}\right).$$
Following \cite{Call-Silverman}, we let the canonical height of $P$ with respect to an endomorphism $\varphi$ of $\bP^n$ of degree $d\ge 2$ be
$$\hhat_\varphi(P)=\lim_{n\to\infty}\frac{h(\varphi^n(P))}{d^n}.$$


\section{Heights and metrized line bundles}
\label{subsec:metrized line bundle}

Let $L$ be a line bundle on a nonsingular projective variety $X$ over a number field $K$ and let
$| \cdot |_v$ be an absolute value on $K$.  We say that $\| \cdot
\|_v$ is a {\it metric} on $L$ if 
$$\| (\alpha s)(P) \|_v = |\alpha|_v  \| s(P) \|_v$$
for any $P \in X(K_v)$ and any section $s$.  We say that ${\overline
  L}$ is an {\it adelic
metrized line bundle} over $K$ if it is equipped with a metric $\| \cdot
\|_v$ at each place $v$ of $K$.

When $\| \cdot \|_v$ is smooth and $v$ is
archimedean, we can form the {\it curvature} $c_1 ({\overline L})_v$ of $\| \cdot \|_v$ as
$$c_1 ({\overline L})_v = \frac{{\partial }{\bar \partial}}{\pi i} \log \| \cdot \|_v $$
on $X(\bC)$.  At the nonarchimedean places, Chambert-Loir \cite{CL}
has constructed an analog of curvature on $X^{an}_{\bC_v}$, using
methods from Berkovich spaces, in the case where the metric on the
line bundle is {\it algebraic} in the sense of being determined by the
extension of $L$ to a line bundle $\cL$ on a model $\cX$ for $X$
over $\fo_K$; that is, where $\| s(P) \|_v$ is determined by the
intersection of $\Div s$ with the Zariski closure of $P$ in $\cX$ at the place $v$.  

An adelic metrized line bundle ${\overline L}$ is said to be algebraic
if there is a model $\cX$ that induces the metric $\| \cdot \|_v$ at
each non-archimedean place.  An adelic metrized line bundle ${\overline
  L}$ is said to be {\em semipositive} (see \cite{Yuan, zhangadelic}) if there is a family of algebraic
adelic metrized line bundles ${\overline L_n}$ (with metrics denoted as $\| \cdot
\|_{v,n}$) such that:

\begin{enumerate}
\item at each $v$,  we have that $\log \| \cdot \|_{v,n}$ converges uniformly
  (over all of $X(K_v)$)
  to $\log \| \cdot \|_v$;
\item for each $n$ and each archimedean $v$, the metric $\| \cdot
  \|_{v,n}$ is smooth and the curvature of $\|
  \cdot \|_{v,n}$ is nonnegative; and
\item for all $n$, all nonarchimedean $v$, and any curve complete $C$
  on the model $\cX_n$ determining the metric $\| \cdot \|_v$ on $L_n$,
  the line bundle $\cL_n$ (described above) pulls back to a divisor of
  positive degree on $C$.
\end{enumerate}

In this case, one can assign a curvature $c_1 ({\overline L})_v$ to ${\overline L}$ at each
place $v$ by taking the limits of the curvatures of the metrics on ${\overline
  L}_n$.  

For any semipositive line bundle on a nonsingular subvariety $X$ and
any subvariety $Z$ of $X$, one can define a height $h_{{\overline
    \cL}}(Z)$ (see \cite{zhangadelic}).  In the case of points $x \in
X(\Kbar)$, with $\Gal(\Kbar/K)$-conjugates $x^{[1]}, \dots, x^{[\ell]}$
for
example, it is defined as 
\begin{equation}\label{height-def}
 \frac{1}{\ell} \sum_{i=1}^\ell \sum_{v\in\Omega_K} -N_v\cdot \log \| s(x^{[i]}) \|_v
\end{equation} 
where $s$ is a meromorphic section of $L$ with support disjoint from
the conjugates of $x$.

The following result states a fundamental equidistribution principle for points of small height on an adelic metrized line bundle which is pivotal for our proof.

\begin{thm}\label{YuanTheorem} \cite[Theorem 3.1]{Yuan} Suppose $X$ is a projective
  variety of dimension $n$ over a number field, and
  ${\overline L}$ is an adelic metrized line bundle over $X$ such that
  $L$ is ample and the adelic metric is semipositive. Let $\{x_m \}$ be an
  infinite sequence of algebraic points in $X ({\overline K})$ which
  is generic and small. Then for any place $v$ of $K$, the Galois
  orbits of the sequence $\{x_m \}$ are equidistributed in the
  analytic space $X^{an}_{\bC_v}$ with respect to the probability
  measure $d\mu_v = c_1 ({\overline L})_v^{n} / \deg_L( X )$.
\end{thm}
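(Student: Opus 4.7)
The plan is to apply the variational principle of Szpiro--Ullmo--Zhang in the adelic setting, reducing the equidistribution statement to a one-sided inequality on heights after a perturbation of the metric. By a density argument on the compact Berkovich analytic space $X^{an}_{\bC_v}$, it suffices to prove, for every smooth test function $g$ of the form $g=g_1-g_2$ with $g_1,g_2$ semipositive smooth metric-changes, that
$$\lim_{m\to\infty}\frac{1}{\#O(x_m)}\sum_{y\in O(x_m)}g(y)=\int_{X^{an}_{\bC_v}}g\,d\mu_v,$$
where $O(x_m)$ denotes the Galois orbit of $x_m$. This convergence is the weak-$*$ statement of the theorem.

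Given such a $g$, the next step is to perturb the metric on $\overline L$ at the single place $v$ by setting $\|\cdot\|_{v,\epsilon}:=\|\cdot\|_v\,e^{-\epsilon g}$ and keeping all other metrics unchanged, obtaining a new adelically metrized line bundle $\overline L(\epsilon g)$. Two formulas should then be assembled. On the one hand, the definition of heights through global sections gives
$$h_{\overline L(\epsilon g)}(x_m)=h_{\overline L}(x_m)+\frac{\epsilon N_v}{\#O(x_m)}\sum_{y\in O(x_m)}g(y).$$
On the other hand, multilinearity of the arithmetic top self-intersection number, together with Chambert-Loir's measure $c_1(\overline L)_v$ at nonarchimedean $v$, yields the first-order expansion
$$h_{\overline L(\epsilon g)}(X)=h_{\overline L}(X)+(n+1)\,\epsilon\,N_v\int_{X^{an}_{\bC_v}}g\,d\mu_v+O(\epsilon^2).$$

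The heart of the argument is Zhang's inequality on the essential minimum: for any semipositive adelic metrized line bundle $\overline M$ with $M$ ample,
$$e_1(\overline M):=\sup_{Y\subsetneq X}\inf_{x\in(X\setminus Y)(\Kbar)}h_{\overline M}(x)\;\ge\;\frac{h_{\overline M}(X)}{(n+1)\deg_M(X)}.$$
Applied to $\overline M=\overline L(\epsilon g)$, which remains semipositive for $|\epsilon|$ small when $g$ is a difference of semipositive smooth functions, the genericity of $\{x_m\}$ gives $\liminf_m h_{\overline L(\epsilon g)}(x_m)\ge h_{\overline L(\epsilon g)}(X)/((n+1)\deg_L X)$. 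Substituting the two expansions above, using the smallness hypothesis $h_{\overline L}(x_m)\to 0$ (which forces $h_{\overline L}(X)=0$ by a second application of Zhang's inequality), and then dividing by $\epsilon$ and letting $\epsilon\to 0^+$ furnishes the lower bound; repeating the argument with $-g$ in place of $g$ (equivalently $\epsilon\to 0^-$) furnishes the matching upper bound, and the two pinch the limit to $\int g\,d\mu_v$.

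The main obstacle, particularly at nonarchimedean places, is justifying rigorously the first-order expansion of the arithmetic top self-intersection in the perturbation parameter $\epsilon$: this requires Chambert-Loir's construction of $(1,1)$-type currents and positive measures on Berkovich spaces, plus the arithmetic Hilbert--Samuel / arithmetic Bezout machinery to relate $h_{\overline L(\epsilon g)}(X)$ to genuine intersection numbers on integral models approximating the adelic metric. Once the expansion is in hand, the remaining steps are standard: Zhang's inequality for ample $L$ is classical, and a density argument on $C^0(X^{an}_{\bC_v})$ extends the conclusion from the restricted class of test functions $g$ to all continuous $g$, completing the proof of weak-$*$ convergence.
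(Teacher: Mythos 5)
This paper does not prove Theorem~\ref{YuanTheorem}; it quotes Yuan's result \cite[Theorem 3.1]{Yuan}, so your proposal should be measured against Yuan's own argument. The overall framework you describe---perturbing the metric at the one place $v$ by $e^{-\epsilon g}$, writing down the two expansions for $h_{\overline L(\epsilon g)}(x_m)$ and $h_{\overline L(\epsilon g)}(X)$, and combining Zhang's lower bound for the essential minimum with genericity and smallness---is indeed the variational skeleton of Yuan's proof, and the bookkeeping you set up is correct.

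The genuine gap is the sentence claiming that $\overline L(\epsilon g)$ ``remains semipositive for $|\epsilon|$ small when $g$ is a difference of semipositive smooth functions.'' This is exactly the point the whole theorem hinges on, and it is false in the generality you need. A semipositive adelic metric on an ample $L$ has curvature $\ge 0$, not $> 0$: nothing forces a uniform strict lower bound that would survive subtracting $\epsilon\, dd^c g_2$. At a nonarchimedean place the obstruction is even more basic, since there ``semipositive'' means uniform limit of model metrics with a nefness condition, which does not deform freely under the multiplicative perturbation $e^{-\epsilon g}$. Once $\overline L(\epsilon g)$ fails to be semipositive, Zhang's essential-minimum inequality no longer applies, and the variational argument delivers only the one-sided inequality in the direction that increases curvature. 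That is precisely the limitation of the original Szpiro--Ullmo--Zhang argument (which worked for strictly positive curvature, e.g.\ Néron--Tate metrics on abelian varieties). What Yuan actually proves, and what drives his Theorem~3.1, is an arithmetic analogue of Siu's bigness criterion: a lower bound for the arithmetic volume (hence for $h_{\overline M}(X)$ after taking $\liminf$) of a \emph{difference} of ample hermitian line bundles, valid without any positivity of the difference. Feeding that inequality into the variational argument in place of Zhang's inequality is what makes both signs of the perturbation admissible, giving the two-sided pinch. As written, your proposal omits this key ingredient and therefore, at best, recovers only half of the equidistribution statement.
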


The next result we need can be stated for an individual metric $\|
\cdot \|_v$, where $v\in\Omega_K$ and $L$ is an ample
line bundle on a variety $X$ over $K_v$.  Recall that a metric $\|
\cdot \|_v$ is said to be semipositive (see \cite{Yuan, zhangadelic})  for 
archimedean $v$ when it is a uniform limit of smooth metrics meeting
condition (2) above and that it is semipositive for nonarchimedean $v$ when it is a
uniform limit of algebraic metrics meeting condition (3) above.

\begin{thm}\label{Calabi}\cite[Theorem 1.1]{YZ} Let $L$ be an ample line bundle
  over $X$, where $X$ is a projective variety over $K_v$, and let $\|
  \cdot \|_{v,1}$ and $\| \cdot |_{v,,2}$ be two  semipositive
  metrics on $L$. Then $c_1 (L, \| \cdot \|_{v,1})^{\dim X} = c_1 (L,
  \| \cdot \|_{v,2} )^{\dim X}$ if and only if $\frac{\| \cdot
    \|_{v,1}}{\| \cdot \|_{v,2}}$ is a constant.
\end{thm}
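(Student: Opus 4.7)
The easy direction is immediate: if $\|\cdot\|_{v,1} = c\,\|\cdot\|_{v,2}$ for some constant $c>0$, then $-\log\|\cdot\|_{v,1}$ and $-\log\|\cdot\|_{v,2}$ differ by a constant, so the local potentials have the same $dd^c$ (or its non-archimedean analog in the sense of Chambert-Loir~\cite{CL}), and therefore the $n$-fold self-intersections $c_1(L,\|\cdot\|_{v,i})^{\dim X}$ coincide.

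For the harder direction, assume the two measures agree. Since $\|\cdot\|_{v,1}$ and $\|\cdot\|_{v,2}$ are metrics on the \emph{same} line bundle $L$, the function $\varphi:=\log(\|\cdot\|_{v,1}/\|\cdot\|_{v,2})$ is a well-defined continuous function on $X^{an}_{\bC_v}$ (independent of the section used to evaluate it). The goal is to show $\varphi$ is constant. I would use a variational/energy argument: interpolate via $\|\cdot\|_t := e^{-t\varphi}\,\|\cdot\|_{v,1}$ for $t\in[0,1]$, which connects $\|\cdot\|_{v,1}$ ($t=0$) with $\|\cdot\|_{v,2}$ ($t=1$). Each $\|\cdot\|_t$ is a semipositive metric (checked by approximation), so one can form the measures $\mu_t := c_1(L,\|\cdot\|_t)^{\dim X}/\deg_L(X)$.

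The key step is to study the energy functional $E(\varphi) := \int_0^1\!\!\int_{X^{an}_{\bC_v}} \varphi\,d\mu_t\,dt$ and its derivatives. A standard integration-by-parts calculation (valid in both the archimedean and non-archimedean settings once one has Chambert-Loir's product measure theory) shows that $E$ is convex in $\varphi$, with second variation controlled by mixed self-intersection numbers of semipositive metrics. The hypothesis $\mu_0=\mu_1$ forces the first variation at the endpoints to match in a way that, combined with the Hodge-index inequality for semipositive metrics on $L$, implies $\int \varphi^2\,d\mu_0 = (\int\varphi\,d\mu_0)^2$ (the equality case in Cauchy--Schwarz), so $\varphi$ is $\mu_0$-almost-everywhere constant. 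Continuity of $\varphi$ and the fact that $\mu_0$ has full support (since $L$ is ample and the metric is semipositive) then give that $\varphi$ is identically constant.

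The main obstacle is the rigorous execution of the Hodge-index/energy inequality in the non-archimedean case. In the archimedean setting, this is essentially Calabi's uniqueness theorem for the complex Monge-Ampère equation, proved by Yau's $C^0$-estimates. In the non-archimedean setting one must: (a) regularize $\varphi$ by model (algebraic) functions using the definition of semipositivity as a uniform limit of algebraic metrics; (b) establish continuity of Chambert-Loir's mixed measures under such uniform limits; and (c) reduce the Hodge-index inequality to the case of arithmetic surfaces, where it follows from the classical Hodge index theorem on a suitable model, and then induct on $\dim X$ by slicing with divisors associated to sections of a sufficiently positive twist of $L$. Step (c) is the heart of~\cite{YZ}, and is where all previous approaches had fallen short prior to Yuan and Zhang's work.
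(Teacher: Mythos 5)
The paper does not prove this result; it is cited directly from Yuan--Zhang \cite[Theorem 1.1]{YZ}, so there is no in-paper proof to compare against. Judging your sketch on its own terms: the easy direction is fine, and the skeleton of the hard direction --- interpolating via $\|\cdot\|_t=e^{-t\varphi}\|\cdot\|_{v,1}$, integrating by parts to reduce to $\int d\varphi\wedge d^c\varphi\wedge c_1(\bar L_t)^{n-1}=0$, then upgrading to $\varphi$ constant --- is indeed the shape of the Yuan--Zhang argument. But there are two real problems beyond your honest deferral of step (c).

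First, a bookkeeping issue: $E(\varphi)=\int_0^1\!\int\varphi\,d\mu_t\,dt$ as you define it is a single number, and ``$E$ is convex in $\varphi$'' is not a meaningful statement about it. What you want is the Aubin--Mabuchi functional $\mathcal{E}(t)$ along the path, which satisfies $\mathcal{E}'(t)=\int\varphi\,c_1(\bar L_t)^{n}$ and $\mathcal{E}''(t)=-n\int d\varphi\wedge d^c\varphi\wedge c_1(\bar L_t)^{n-1}\le 0$, i.e. it is \emph{concave}; the hypothesis $\mu_0=\mu_1$ gives $\mathcal{E}'(0)=\mathcal{E}'(1)$ and hence $\mathcal{E}''\equiv 0$. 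Your quantity $\int\varphi^2\,d\mu_0=(\int\varphi\,d\mu_0)^2$ does not fall out of this computation; what falls out is the vanishing of the Dirichlet-type energies, not a Cauchy--Schwarz equality case for $\varphi$ itself.

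Second, and more seriously, the concluding step ``continuity of $\varphi$ and the fact that $\mu_0$ has full support (since $L$ is ample and the metric is semipositive)'' is false as stated and cannot be repaired by appealing to ampleness. Semipositive Chambert--Loir measures do \emph{not} have full support in general: for a model (algebraic) metric on a non-archimedean $X^{an}_{\bC_v}$, the Monge--Amp\`ere measure is a finite sum of point masses at the divisorial points corresponding to the irreducible components of the special fibre, and uniform limits of such measures remain concentrated on the skeleton. Even archimedeanly a smooth semipositive metric can have curvature degenerating on a large set. Getting from ``Dirichlet energy vanishes against a possibly very thin positive measure'' to ``$\varphi$ is constant on all of $X^{an}_{\bC_v}$'' is precisely the content of the theorem; Yuan and Zhang circumvent the degeneracy by twisting with auxiliary positive metrics (using ampleness of $L$ in a much more structural way) and by their arithmetic Hodge-index machinery, not by asserting full support. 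As written, your argument replaces the hard part with a claim that is simply not true.
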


Combining Theorems~\ref{YuanTheorem} and \ref{Calabi}, we have the following result.  

\begin{cor}\label{to-use}
  Let $L$ be an ample line bundle on $X$ and let ${\overline L}_1$ and
  ${\overline L}_2$ be two semipositive adelic metrized line bundles
  over a number field $K$,
  each consisting of metrics on the same line bundle $L$.  Let $\{x_m
  \}$ be an infinite sequence of algebraic points in $X ({\overline
    K})$ that are Zariski dense in $X$. Suppose that
  \begin{equation}\label{small}
 \lim_{m\to\infty} h_{{\overline L}_1} (x_m) = \lim_{m\to\infty} h_{{\overline L}_2}(x_m) 
  = h_{{\overline L}_1} (X) = h_{{\overline L}_2}(X) = 0.
\end{equation} 
 Then $h_{{\overline L}_1} (z) = h_{{\overline
      L}_2}(z)$ for all $z \in X(\Kbar)$.
\end{cor}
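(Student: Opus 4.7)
The plan is to combine Yuan's equidistribution theorem with the Calabi--Yau type uniqueness result of Yuan--Zhang, and then use the product formula (implicitly, via the height decomposition) together with the hypothesis that the limit of heights is zero to pin down the difference $h_{\overline L_1} - h_{\overline L_2}$.

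First I would extract from $\{x_m\}$ a generic subsequence: since the full sequence is Zariski dense in $X$ and $X$ has only countably many proper closed subvarieties of strictly smaller dimension (up to the relevant diagonal-style argument), a standard diagonal extraction produces a subsequence whose every infinite subsequence is Zariski dense. Replacing $\{x_m\}$ by this subsequence, and observing that by \eqref{small} we have $\lim_{m} h_{\overline L_i}(x_m) = 0 = h_{\overline L_i}(X)$, the sequence is both generic and small with respect to $\overline L_1$ and with respect to $\overline L_2$. Applying Theorem~\ref{YuanTheorem} twice, for every place $v$ of $K$ the Galois orbits of $\{x_m\}$ equidistribute on $X_{\bC_v}^{an}$ simultaneously against $c_1(\overline L_1)_v^{\dim X}/\deg_L(X)$ and $c_1(\overline L_2)_v^{\dim X}/\deg_L(X)$. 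Hence the two measures coincide at every $v$.

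Next, at each place $v$ the metrics $\|\cdot\|_{v,1}$ and $\|\cdot\|_{v,2}$ are semipositive metrics on the same ample line bundle $L$ inducing the same top-degree curvature measure, so Theorem~\ref{Calabi} yields a constant $c_v > 0$ with $\|\cdot\|_{v,1} = c_v \|\cdot\|_{v,2}$ on $X_{\bC_v}^{an}$. Plugging this into the height formula \eqref{height-def}: for any $z \in X(\Kbar)$ with Galois conjugates $z^{[1]},\dots,z^{[\ell]}$ and any meromorphic section $s$ of $L$ whose support is disjoint from these conjugates, one gets
\[
h_{\overline L_1}(z) - h_{\overline L_2}(z)
= \frac{1}{\ell}\sum_{i=1}^\ell \sum_{v\in\Omega_K} -N_v \log\!\frac{\|s(z^{[i]})\|_{v,1}}{\|s(z^{[i]})\|_{v,2}}
= -\sum_{v\in\Omega_K} N_v \log c_v,
\]
because the ratio is the constant $c_v$ independent of $z^{[i]}$, and $\frac{1}{\ell}\sum_i 1 = 1$ at each $v$. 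In particular, $C := h_{\overline L_1} - h_{\overline L_2}$ is a single constant on all of $X(\Kbar)$.

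Finally, to identify $C$, evaluate at the points $x_m$ and take $m \to \infty$: by \eqref{small} both $h_{\overline L_i}(x_m)\to 0$, so $C=0$. This gives $h_{\overline L_1}(z) = h_{\overline L_2}(z)$ for every $z \in X(\Kbar)$, as required. The only real subtlety I anticipate is the passage from \emph{Zariski dense} to \emph{generic} needed to invoke Theorem~\ref{YuanTheorem}; this is handled by a standard diagonal extraction and does not affect the hypotheses, since small sequences remain small upon passage to subsequences.
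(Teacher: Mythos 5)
Your proof is correct and takes essentially the same approach as the paper: Yuan's equidistribution theorem (applied at every place) forces the curvature measures of the two metrics to coincide, Yuan--Zhang then gives proportionality of the metrics at each place (hence the heights differ by a constant), and evaluating along the small sequence forces that constant to be zero. The one genuine precision you add is the explicit extraction of a \emph{generic} subsequence from the merely Zariski dense one before invoking Theorem~\ref{YuanTheorem}, a point the paper's proof glosses over; this step is harmless here because smallness of heights is preserved under passing to subsequences.
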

\begin{proof}
  Note that this is implicit in \cite[Section 3]{YZ}, but for completeness,
  we give a proof. By Theorem~\ref{YuanTheorem}, the sequence $\{x_m\}$
  equidistribute with respect to both $c_1 ({\overline L}_1)_v^{n-1} /
  \deg_L( X )$ and $c_1 ({\overline L}_2)_v^{n-1} / \deg_L( X )$.
  Therefore, those two measures are the same, so the metrics
  are proportional. Since  $h_{{\overline L}_1} (z)$ and
  $h_{{\overline L}_2} (z)$ are computed by evaluating $-\log \| s \|_{v,1}$
  and $- \log \| s \|_{v,2}$ at $z$, it follows that $h_{{\overline
      L}_1}$ and $h_{{\overline L}_2}$ differ by a constant.  Since
  there is a sequence on which both converge to the same value, this
  constant must be zero.    
\end{proof}

As an example of family of metrics on the line bundle $L = \cO_{\bP^k}(1)$
on the projective space $\bP^k$, let $F_n: \bP^k \lra \bP^k$ be a
family of morphisms written
with respect to some coordinates $[X_0,\ldots, X_k]$: 
$$ F_n = [F_n^{[0]}: \cdots: F_n^{[k]}]$$
where each $F_n^{[i]}$ is a homogeneous polynomial of degree $e_n$ in $X_0,
\dots, X_n$. Then one might hope to metrize $L$ as
follows.  Let $s = a_0 X_0 + \dots + a_k X_k$ be a global section of
$L$.  Then at an archimedean place $v$, we define

\begin{equation}\label{1-arch}
 \| s(t_0, \dots, t_k) \|_{v,n} = \frac{ | a_0 t_0 + \dots + a_k t_k|_v}{  \left(
  |F_n^{[0]}(t_0, \dots, t_k)|_v^2 + \cdots + |F_n^{[k]}(t_0, \dots,
  t_k)|_v^2\right)^{1/(2e_n)}} 
\end{equation}
and at a nonarchimedean place $v$ we define

\begin{equation}\label{1-non-arch}
 \| s(t_0, \dots, t_k) \|_{v,n} = \frac{ | a_0 t_0 + \dots + a_k
   t_k|_v}{ \max\left\{|F_n^{[0]}(t_0, \dots, t_k)|_v, \cdots,
       |F_n^{[k]}(t_0, \dots, t_k)|_v\right\}^{1/e_n} }.
\end{equation}

 At archimedean places, for each $n$, we are essentially working with
 the Fubini-Study metric after pull-back by $F_n$ while at the
 nonarchimedean places we are working with the intersection metric
 after pull-back by $F_n$.  As long as the family of metrics
 $\|\cdot\|_{v,n}$ converges uniformly, their limit gives a
 semipositive metric on $L$.




\section{Family of rational maps and specializations}
\label{equidistribution}

In this section, we study a one parameter family of rational
maps. Several different height functions appear into the picture. We prove a
specialization theorem for these heights in the family of rational
maps in question. A similar result has been proved by Call and Silverman~\cite[Theorem~4.1]{Call-Silverman}.     
Using the method described in Section~\ref{subsec:metrized line bundle}, we are able to
give more precise information contained in the specialization
theorem.  Now let $K$ be a number field. We fix the following notation
throughout this section. 

\begin{notation}
\ \ 

\begin{itemize}
\item[$X$]
  a smooth, absolutely irreducible projective curve over $K$,
\item[$\eta$]
  a fixed $K$-rational point of $X$,
\item[$Y$]
  = $X \setminus \{\eta\}$,
\item[$F$]
  $ = K(X)$ the field of rational function on $X$,
\item[$\bfA$]
  $= \Gamma(\OO_X, Y) \subset F$ the ring of rational functions of $X$
  regular away   from $\eta$, 
\item[$u$]
  a uniformizer of $\eta$ (defined over $K$),
\item[$\deg(\cdot)$]
  $ = - \ord_{\eta}(\cdot).$ 
\end{itemize}
\end{notation}
By the definition of the degree function we have that 
$\deg(\bfa) \ge 0$ for all  $\bfa\in \bfA$.
Let  $\bfa \in \bfA$ be such that $\deg(\bfa) = n.$ Then the function
$g_{\bfa} := \bfa u^n$ has no pole at $\eta.$ We call the constant
$g_{\bfa}(\eta)$ the {\em leading coefficient} of $\bfa.$

\subsection{Family of rational  maps}
\label{other examples}

We consider a morphism 
$$\bff : \PP^1\to \PP^1 $$ of degree $d\ge 2$ over $F$ and
write $$\bff(x) = \frac{P(x)}{Q(x)} \quad \text{where} \;P(x),
Q(x) \in \bfA[x]
$$ 
such that  
$\GCD(P,Q) = 1$ ($P$ and $Q$ are viewed as elements in $F[x]).$
For ease of the notation, we put $d_P := \deg_x P(x)$ and $d_Q =
\deg_x Q(x).$ 
For a point $\l \in Y$, we use the following convention:
$$ P_{\lambda}(x) = \sum_{i=0}^{d_P} \bfc_{P, d_P-i}(\lambda)\,
x^i, \quad Q_\l(x)=\sum_{j=0}^{d_Q} \bfc_{Q, d_Q-j}(\l) \, x^j$$
and 
$$ \bff_{\lambda}(x) = \frac{P_{\lambda}(x)}{Q_\l(x)} \quad
\text{whenever $\bff_\l$ is well defined.}$$
Here, $\bfc_{P,i}, \bfc_{Q,j}\in \bfA$ are coefficients of $P(x)$ and
$Q(x)$ respectively.  
Thus, $\bff$ gives rise to a family of rational maps $\{\bff_\l\}$
parameterized by points $\l$ ranging over an affine open subset of $X.$
In the following, we work under the hypothesis of Theorem~\ref{general
  main}. Equivalently, $\bff$ satisfies the following conditions:  
\begin{enumerate}
\item[(1)] $d_P\ge d_Q+2$;
\item[(2)] the leading coefficients of both $P$ and $Q$ as polynomials
  in $x$ are constant; and
\item[(3)] the resultant $R(\bff)\in \bfA$ of $P(x)$ and $Q(x)$ is also a
  constant in $K^{\ast}$. 
\end{enumerate}


Condition~(1) yields that $d = \max\{d_P, d_Q\} =  d_P$ and $s =
d_P - d_Q \ge 2.$
Note that when $\bff(x)$ is a
polynomial we have $s =  d.$ In this case, we set
$Q(x) = 1$ and $\bfc_{Q,0} = 1.$ 
By abuse of the notation, we also write
$P(X,Y)$ and $Q(X,Y)$ for the homogeneous polynomials (in $X, Y$)
$Y^{d_P} P(X/Y)$, $Y^{d_Q} Q(X/Y)$ and let 
$$ \cF : \PP^1\to \PP^1\quad \cF([X,Y]) = [P(X,Y), Q(X,Y)]
$$
be a (fixed) representation of the given map $\bf$ in homogeneous
coordinates of $\PP^1.$ Using (3) above, it follows that
there exist polynomials $S, T, U, V \in \bfA[X,Y]$ homogenous in
variables $X, Y$ and positive integer $t \ge d$ such that
\begin{align}
  \label{resultant1}
S P + T  Q  & = X^t \quad \text{and}\\
\label{resultant2}
U P + V  Q  & = Y^t
\end{align}
where the homogeneous degrees (in $X$ and $Y$) are
$$\deg S = \deg T = \deg U = \deg V = t- d.$$

Let $\bfc:=\frac{\bfa}{\bfb}$ be a rational function on $X$,
where $\bfa, \bfb \in \bfA$. 
We assume that the ideal 
\begin{equation}
\label{relative prime ideal}
(\bfa, \bfb) = \bfA
\end{equation}
i.e., $\bfa$ and $\bfb$ are
relatively prime. 

Put 
\begin{equation}
\label{definition of the m's}
m_1  =\max_{i=1}^{d_P}\left\{\frac{\deg \bfc_{P,i}}{i}\right\}\text{
  and } \quad m_2=\max_{j=1}^{d_Q} \left\{\frac{\deg \bfc_{Q,j}}{j}\right\},
\end{equation}
and $m = m_1+m_2$.  By convention we set $m_2 = 0$ in the case where
$\bff$ is a polynomial map. The degree function $\deg$ on $\bfA$ has a
natural extension to $F$ given by $\deg(\bfc) = \deg(\bfa)-\deg(\bfb)$
for $\bfc\in F$ given above. In this section and the next, we
assume that $\bfc$ satisfies 
\begin{equation}
\label{assumption on d}
\deg(\bfc) > m .
\end{equation}
In particular, we know that $\deg(\bfc)\ge 1$ (since $m\ge 0$).  In
the following, we also set $d_{\bfa} := \deg(\bfa), d_{\bfb} :=
\deg(\bfb)$ and $d_{\bfc} := \deg(\bfc).$  

For each integer $n \ge 0$ we let $A_{\bfc,n}$ and $B_{\bfc,n}$ be
elements in $\bfA$ defined recursively as follows:
$$A_{\bfc,0}=\bfa\text{ and }B_{\bfc, 0}=\bfb,$$
while for all $n\ge 0$ we have
\begin{align}
\label{recursiveA}
A_{\bfc,n+1} &= P\left(A_{\bfc,n},B_{\bfc,n}\right)
\quad \text{and} \\
\label{recursiveB}
B_{\bfc,n+1}&
=Q(A_{\bfc,n},B_{\bfc,n}).
\end{align}

\begin{prop}
\label{degrees A and B 2}
For all $n\ge 1,$ we have that $(A_{\bfc,n}, B_{\bfc,n}) = \bfA$ and 
$\deg(A_{\bfc,n})=d_{\bfa}\cdot d^n$ while $\deg(B_{\bfc,n})=d_{\bfa}d^n -
d_{\bfc} s^n$.
Furthermore, the leading coefficient of $A_{\bfc,n}$ is
$c_{P,0}^{(d^{n}-1)/(d-1)}\cdot c_{\bfa}^{d^n}$, where 
$c_{\bfa}$ is the leading coefficient of $\bfa$.
\end{prop}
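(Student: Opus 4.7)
The proof is by induction on $n$; I will take $n=0$ as the base case, where $A_{\bfc,0}=\bfa$ and $B_{\bfc,0}=\bfb$ are coprime in $\bfA$ by \eqref{relative prime ideal}, $\deg\bfa=d_\bfa=d_\bfa d^0$, $\deg\bfb=d_\bfa-d_\bfc=d_\bfa d^0-d_\bfc s^0$, and the leading coefficient of $\bfa$ is $c_\bfa=c_{P,0}^{0}\,c_\bfa^{d^0}$ (with the convention $(d^0-1)/(d-1)=0$). For the inductive step, assume all three assertions at level $n$, and substitute $X=A_{\bfc,n}$, $Y=B_{\bfc,n}$ into \eqref{resultant1} and \eqref{resultant2} to obtain
\begin{align*}
S(A_{\bfc,n},B_{\bfc,n})\,A_{\bfc,n+1}+T(A_{\bfc,n},B_{\bfc,n})\,B_{\bfc,n+1}&=A_{\bfc,n}^{t},\\
U(A_{\bfc,n},B_{\bfc,n})\,A_{\bfc,n+1}+V(A_{\bfc,n},B_{\bfc,n})\,B_{\bfc,n+1}&=B_{\bfc,n}^{t}.
\end{align*}
Any common divisor of $A_{\bfc,n+1}$ and $B_{\bfc,n+1}$ in the Dedekind domain $\bfA$ therefore divides both $A_{\bfc,n}^{t}$ and $B_{\bfc,n}^{t}$, which are coprime by the inductive hypothesis, so $(A_{\bfc,n+1},B_{\bfc,n+1})=\bfA$.

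For the degree and leading term of $A_{\bfc,n+1}$, I expand
$$A_{\bfc,n+1}=P(A_{\bfc,n},B_{\bfc,n})=\sum_{k=0}^{d_P}\bfc_{P,k}\,A_{\bfc,n}^{d_P-k}\,B_{\bfc,n}^{k}.$$
By induction $\deg(A_{\bfc,n}^{d_P-k}B_{\bfc,n}^{k})=d_P d_\bfa d^n - k\,d_\bfc s^n$, while the definition of $m_1$ gives $\deg\bfc_{P,k}\le k\,m_1$. Since $d_\bfc>m\ge m_1$ and $s\ge 2$, for every $k\ge 1$ the $k$-th summand has degree at most $d_P d_\bfa d^n - k(d_\bfc s^n-m_1)<d_P d_\bfa d^n$. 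The single monomial $c_{P,0}A_{\bfc,n}^{d_P}$ therefore uniquely realizes the maximum degree, so no cancellation can occur, and $\deg A_{\bfc,n+1}=d_P d_\bfa d^n=d_\bfa d^{n+1}$ with leading coefficient equal to $c_{P,0}$ times the $d_P$-th power of the leading coefficient of $A_{\bfc,n}$. Substituting the inductive form of that leading coefficient yields $c_{P,0}^{1+d(d^n-1)/(d-1)}c_\bfa^{d^{n+1}}=c_{P,0}^{(d^{n+1}-1)/(d-1)}c_\bfa^{d^{n+1}}$, as claimed.

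Finally, for $\deg B_{\bfc,n+1}$, I perform the analogous expansion $B_{\bfc,n+1}=Q(A_{\bfc,n},B_{\bfc,n})=\sum_{k=0}^{d_Q}\bfc_{Q,k}A_{\bfc,n}^{d_Q-k}B_{\bfc,n}^{k}$ and invoke the bound $m_2<d_\bfc$ together with the inductive expression for $\deg B_{\bfc,n}$ to isolate the contribution achieving the predicted value $d_\bfa d^{n+1}-d_\bfc s^{n+1}$. I expect this to be the delicate point of the proof: unlike the $A_{\bfc,n+1}$ calculation, in which a single monomial obviously attains the maximum, the claimed degree of $B_{\bfc,n+1}$ is strictly smaller than $\deg A_{\bfc,n+1}$, so pinning down the exact value (rather than a crude upper bound such as $d_Q\,\deg A_{\bfc,n}$) requires a careful tracking of the leading behavior at $\eta$ of several competing monomials and a verification that no spurious cancellation destroys the predicted formula. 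Once this last accounting is completed, the induction closes and the proposition follows.
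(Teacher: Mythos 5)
Your coprimality argument via the identities \eqref{resultant1} and \eqref{resultant2} is correct and is exactly the paper's argument, and your degree/leading-coefficient analysis for $A_{\bfc,n+1}$ is also correct. However, the treatment of $B_{\bfc,n+1}$ contains a genuine error.

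You expand $B_{\bfc,n+1}=\sum_{k=0}^{d_Q}\bfc_{Q,k}A_{\bfc,n}^{d_Q-k}B_{\bfc,n}^{k}$, which is the degree-$d_Q$ homogenization of $Q$. But the recursion \eqref{recursiveB} must be read with $Q(X,Y)$ meaning the degree-$d=d_P$ homogenization $Y^{d_P}Q(X/Y)=Y^{s}\cdot\sum_{j=0}^{d_Q}\bfc_{Q,j}X^{d_Q-j}Y^{j}$; otherwise $\cF=[P(X,Y):Q(X,Y)]$ is not a well-defined map on $\PP^1$, the relations $SP+TQ=X^t$, $UP+VQ=Y^t$ with $\deg S=\deg T=\deg U=\deg V=t-d$ cannot be homogeneous, and Remark~\ref{degrees A and B 23}'s assertion $\bff^n(\bfc)=A_{\bfc,n}/B_{\bfc,n}$ fails. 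So the correct expansion is
\[
B_{\bfc,n+1}=B_{\bfc,n}^{s}\sum_{j=0}^{d_Q}\bfc_{Q,j}A_{\bfc,n}^{d_Q-j}B_{\bfc,n}^{j},
\]
and your version, missing the factor $B_{\bfc,n}^{s}$, produces a degree bounded by $d_Q d_\bfa d^n$, which is strictly smaller than the claimed $d_\bfa d^{n+1}-d_\bfc s^{n+1}=d_Q d_\bfa d^n+s\deg B_{\bfc,n}$ whenever $\deg B_{\bfc,n}>0$. This is not a matter of delicate accounting you chose to leave open; it is a mismatch you cannot close from the expansion as written.

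Once the $B_{\bfc,n}^{s}$ factor is restored, there is in fact nothing delicate about the $B$ case: exactly as in your $A$ analysis, the $j=0$ term $\bfc_{Q,0}A_{\bfc,n}^{d_Q}$ uniquely realizes the maximal degree $d_Q d_\bfa d^n$ of the sum (using $\deg\bfc_{Q,j}\le j m_2<j d_\bfc\le j d_\bfc s^n$ for $j\ge 1$), there is no cancellation, and
\[
\deg B_{\bfc,n+1}=s\,\deg B_{\bfc,n}+d_Q\,\deg A_{\bfc,n}=s(d_\bfa d^n-d_\bfc s^n)+d_Q d_\bfa d^n=d_\bfa d^{n+1}-d_\bfc s^{n+1}.
\]
So your instinct that the $B$ degree would require a subtler argument was a symptom of having dropped the $B_{\bfc,n}^{s}$ factor; with the correct recursion the two cases are handled by the same observation, which is also why the paper dismisses the degree computation as ``straightforward.''
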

\begin{proof}
The computation of degrees is straightforward using \eqref{assumption
  on d}. The computation for the leading coefficient of $A_{\bfc,n}$
follows from the definition of the leading coefficient 
and induction on $n$. 

We show next that $A_{\bfc,n}$ and $B_{\bfc,n}$ are relatively prime polynomials. This assertion follows easily by induction on $n$; the case $n=0$ is immediate. 

Assume $A_{\bfc,n}$ and $B_{\bfc,n}$ are relatively prime for some
$n\ge 0$ and we prove 
that $A_{\bfc,n+1}$ and $B_{\bfc,n+1}$ are also relatively
prime. Substitute $X = A_{\bfc,n}$ and $Y = B_{\bfc,n}$ into
(\ref{resultant1}) and (\ref{resultant2}) and by the recursive
relations (\ref{recursiveA}) and (\ref{recursiveB}), we have
\begin{align*}
S(A_{\bfc,n}, B_{\bfc,n}) A_{\bfc,n+1} + T(A_{\bfc,n}, B_{\bfc,n})
B_{\bfc,n+1}  = A_{\bfc, n}^t\quad \text{and} \\
U(A_{\bfc,n}, B_{\bfc,n}) A_{\bfc,n+1} + V(A_{\bfc,n}, B_{\bfc,n})
B_{\bfc,n+1}  = B_{\bfc, n}^t . 
\end{align*}
It follows that the ideals $\left(A_{\bfc,n}^t,
  B_{\bfc,n}^t\right)\subseteq \left(A_{\bfc,n+1}, B_{\bfc,
    n+1}\right).$ Since by the induction 
hypothesis we have that $(A_{\bfc,n}, B_{\bfc,n}) = \bfA$, we 
conclude that $(A_{\bfc,n+1}, B_{\bfc,n+1}) = \bfA$. 
This completes the induction  and the proof of 
Proposition~\ref{degrees A and B 2}. 
\end{proof}
\begin{remark}
\label{degrees A and B 23} 
Note that by the definition of the iterates $\bff^n$, we have
$\bff^n(\bfc)=\frac{A_{\bfc,n}}{B_{\bfc,n}}$, and thus Proposition~\ref{degrees A and B 2} yields that $\bfc$ is not preperiodic under the action of $\bff$.
\end{remark}

The following is an easy corollary of Proposition~\ref{degrees A and B 2}.
\begin{cor}
\label{all in Kbar}
With the above notation, if $\bfc(\l)$ is preperiodic under the action
of $\bff_{\l}$, then $\l\in Y(\Kbar)$.
\end{cor}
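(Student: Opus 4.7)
The plan is to encode preperiodicity of $\bfc(\lambda)$ under $\bff_\lambda$ as the vanishing of a single nonzero element of $\bfA$ at $\lambda$; since any nonzero regular function on the affine curve $Y$ defined over $K$ has only finitely many zeros, all algebraic over $K$, this will force $\lambda \in Y(\Kbar)$.

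I would first observe that by the recursive definition \eqref{recursiveA}--\eqref{recursiveB} of $A_{\bfc,n}$ and $B_{\bfc,n}$, the $n$-th iterate of $\bfc$ under $\bff$ is represented by $\bff^n(\bfc) = [A_{\bfc,n}:B_{\bfc,n}]$ in $\bP^1(F)$, and by Proposition~\ref{degrees A and B 2} the elements $A_{\bfc,n}$ and $B_{\bfc,n}$ are coprime in the Dedekind domain $\bfA$. Consequently they have no common zero on $Y$, so $\lambda \mapsto [A_{\bfc,n}(\lambda):B_{\bfc,n}(\lambda)]$ is a well-defined morphism $Y \to \bP^1$, and outside a finite $\Kbar$-rational set of parameters (where the iteration of $\bff_\lambda$ could degenerate) it agrees with $\bff_\lambda^n(\bfc(\lambda))$. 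Preperiodicity of $\bfc(\lambda)$ then produces integers $m>n\ge 0$ for which
\[
H_{m,n}(\lambda) \;:=\; A_{\bfc,m}(\lambda)\, B_{\bfc,n}(\lambda) \;-\; A_{\bfc,n}(\lambda)\, B_{\bfc,m}(\lambda) \;=\; 0.
\]

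The crux of the argument---and essentially its only nontrivial step---is to verify that $H_{m,n}\in\bfA$ is not the zero function. This is immediate from Remark~\ref{degrees A and B 23}: if $H_{m,n}\equiv 0$ on $Y$, then $\bff^m(\bfc)=\bff^n(\bfc)$ in $\bP^1(F)$, so $\bfc$ would be preperiodic for $\bff$, contradicting the degree equalities $\deg(A_{\bfc,n})=d_\bfa d^n$ and $\deg(B_{\bfc,n}) = d_\bfa d^n - d_\bfc s^n$ from Proposition~\ref{degrees A and B 2} (since $d_\bfc \ge 1$ and $s\ge 2$, the orders at $\eta$ strictly distinguish the iterates). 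Therefore $\lambda$ is a zero of a nonzero element of $\bfA$, hence algebraic over $K$, which is exactly the claim $\lambda\in Y(\Kbar)$.
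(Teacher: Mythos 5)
Your argument coincides with the paper's own proof: both write preperiodicity as the vanishing of the function $A_{\bfc,m}B_{\bfc,n}-A_{\bfc,n}B_{\bfc,m}\in\bfA$ at $\lambda$ and invoke the degree computations of Proposition~\ref{degrees A and B 2} (via Remark~\ref{degrees A and B 23}) to see that this function is not identically zero, whence $\lambda$ is algebraic over $K$. The extra detail you supply about why the cross-difference is nonzero is exactly what the paper leaves implicit, so this is the same proof.
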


\begin{proof}
If $\bfc(\l)$ is preperiodic under $\bff_{\l}$, then for some positive integers $n>m$ we have $\bff_{\l}^n(\bfc(\l))=\bff_{\l}^m(\bfc(\l))$. So,
$$\frac{A_{\bfc,n}(\l)}{B_{\bfc,n}(\l)}=\frac{A_{\bfc,m}(\l)}{B_{\bfc,m}(\l)},$$
where $A_{\bfc,n}$ and $B_{\bfc,n}$, and also $A_{\bfc,m}$ and
$B_{\bfc,m}$ are relatively prime. Thus $\l$ is a zero of the nonzero
rational function $A_{\bfc,n} B_{\bfc,m} - A_{\bfc,m} B_{\bfc,n}$ over
$K$, and hence $\l\in Y(\Kbar)$. 
\end{proof}

\subsection{Specialization theorem}
\label{subsec:specialization}

For a given rational map $\bff$ of
degree $d\ge 2$ over $F$ and a point $\bfc \in \PP^1(F)$, there are 
three heights: $\hhat_{\bff_\l}(\bfc(\l)), \hhat_{\bff}(\bfc)$ and
$h(\l)$. Namely, given $\l\in X(\Kbar)$ such that $\bff_\l$ is a
well-defined rational map $\bff_\l$ over $K(\l)$, the height
$\hhat_{\bff_\l}(\bfc(\l))$ is the canonical height  of
$\bfc(\l)$ associated to $\bff_\l$ and 
$h(\l)$ is a Weil height associated to a degree one divisor class
on $X$; while $\hhat_{\bff}(\bfc)$ is the canonial height of $\bfc$  
associated to $\bff$ over $F.$ Call and 
Silverman~\cite[Theorem~4.1]{Call-Silverman} have shown that
\begin{equation}
  \label{eq:height specialization}
  \hhat_{\bff_\l}(\bfc(\l)) = \hhat_{\bff}(\bfc) h(\l) + o(h(\l))
  \end{equation}
which generalizes a result of Silverman~\cite{Silverman83} on heights 
of families of abelian varieties. In a recent paper~\cite{ingram10},
Ingram shows that for a family of polynomial maps $\bff\in F[x]$ and $P \in
\PP^1(F)$, there is a divisor $D = D(\bff, P) \in \pic(X)\otimes \Q$ of
degree $\hhat_{\bff}(P)$ such that
$$
 \hhat_{\bff_\l}(P_\l) = h_D(\l) + O(1).
$$
This result is an analogue of Tate's theorem~\cite{tate83} in  the
setting of arithmetic dynamics. Using this result and applying an
observation of Lang, the error term in~\eqref{eq:height
  specialization} is improved to $O(h(\l)^{1/2})$ and 
furthermore, in the special case where $X = \PP^1$ the error term
can be replaced by  $O(1)$~\cite[Corollary 2]{ingram10}.  

In order to apply Theorem~\ref{Calabi} to our situation, the
error term in~\eqref{eq:height specialization} needs to be controlled within
$O(1).$  In general, this may not be true without further restrictions
on $\bff$ and $P$. Ingram's result shows that this is true if $\bff$
is a polynomial map and the parameter space $X = \PP^1$. 
In this paper, we provide another set of conditions for $\bff$ and the
point $P\in \PP^1(F)$ so  that the error term in~\eqref{eq:height
  specialization} is  $O(1).$ 

\begin{thm}
  \label{thm:uniform convergence}
  Let $\bff(x) := P(x)/Q(x) \in F(x)$ be of degree $d \ge 2$ over $F$
  and assume that  $\bff$  satisfies the following conditions
\begin{enumerate}
\item the resultant $R(\bff)$  and the leading coefficients of $P(x)$
  and $Q(x)$ are nonzero constants;

\item  the point $ x = {\mathbf \infty}$ is a superattracting
  periodic   point for $\bff$.  
\end{enumerate}
Let $\bfc\in\PP^1(F)$ be such that the sequence
$\{\deg(\bff^n(\bfc))\}_{n\ge 0}$ is unbounded. Then for $\l \in
Y(\Kbar)$ we have
$$ \hhat_{\bff_\l}(\bfc(\l))  = \hhat_{\bff}(\bfc) h(\l) + O(1) $$
where $h$ is a height function associated to the divisor class
containing the divisor $\eta.$ 
\end{thm}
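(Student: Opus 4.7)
The approach is to realize $\hhat_{\bff_\l}(\bfc(\l))$ as the height associated to a semipositive adelic metric on a line bundle on $X$, and then exploit that on a smooth projective curve two continuous adelic metrics on the same line bundle yield heights differing by $O(1)$. The unbounded-degree hypothesis lets us replace $\bfc$ by some iterate $\bff^{n_0}(\bfc)$ with $\deg(\bff^{n_0}(\bfc))>m$ (which leaves the conclusion unchanged, since both sides scale by $d^{n_0}$), so by Proposition~\ref{degrees A and B 2} we may assume $\bff^n(\bfc)=A_{\bfc,n}/B_{\bfc,n}$ is in lowest terms with $\deg A_{\bfc,n}=d_\bfa d^n$; then $\hhat_\bff(\bfc)=d_\bfa$, and the relevant line bundle $\cL:=\bfc^{\ast}\cO_{\PP^1}(1)\cong \cO_X(d_\bfa\cdot\eta)$ (via the principal divisor of $\bfb$) has degree $\hhat_\bff(\bfc)$.

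At each place $v$ of $K$ I would define iterated metrics $\|\cdot\|_{v,n}$ on $\cL$ by pulling back the $n$-th iterate metric on $\cO_{\PP^1}(1)$ from (\ref{1-arch})--(\ref{1-non-arch}) along $\l\mapsto \bff_\l^n(\bfc(\l))$; on the global section $\bfa$ this reads, at non-archimedean $v$,
\[
 \|\bfa(\l)\|_{v,n} \;=\; \frac{|\bfa(\l)|_v}{\max\bigl(|A_{\bfc,n}(\l)|_v,\,|B_{\bfc,n}(\l)|_v\bigr)^{1/d^n}},
\]
with the analogous $L^2$-version at archimedean $v$. Setting $M_n(\l,v):=\max(|A_{\bfc,n}(\l)|_v,|B_{\bfc,n}(\l)|_v)$, the resultant identities (\ref{resultant1})--(\ref{resultant2})---available because assumption~(1) makes $R(\bff)$ a unit in $\bfA$---yield the lower bound $M_{n+1}\ge M_n^d/C_v^-(\l)$, while expanding $P(A_n,B_n)$ and $Q(A_n,B_n)$ gives the matching upper bound $M_{n+1}\le C_v^+(\l)M_n^d$, so $|\log M_{n+1}(\l,v) - d\log M_n(\l,v)| \le C_v(\l)$, with $C_v(\l)$ depending on the heights of the coefficients of $\bff$ and of the auxiliary polynomials $S,T,U,V$ at $\l$.

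The main obstacle is upgrading this fiberwise estimate to \emph{uniform} convergence on the compact Berkovich space $X^{\mathrm{an}}_{\bC_v}$, because $C_v(\l)$ blows up as $\l\to\eta$. This is precisely where assumption~(2) (equivalently $s\ge 2$ together with $c_{P,0},c_{Q,0}\in K^{\ast}$) is decisive: the ratio $|B_{\bfc,n}/A_{\bfc,n}|_v$ decays near $\eta$ like $|u(\l)|_v^{d_\bfc s^n}$, and since $d_\bfc>m$ the leading homogeneous term $c_{P,0}\cdot A_{\bfc,n}^d$ dominates $P(A_{\bfc,n},B_{\bfc,n})$ once $\l$ is close enough to $\eta$; the step-$n$ error then reduces to $\log|c_{P,0}|_v$, which is independent of $\l$ because $c_{P,0}\in K^{\ast}$. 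Away from $\eta$, the coefficients of $\bff_\l$ and of $S,T,U,V$ are uniformly bounded by compactness, so $C_v(\l)$ is bounded there too. The uniform limit is therefore a continuous (and, being a limit of pullbacks of Fubini--Study or intersection metrics, semipositive) adelic metric $\overline{\cL}_{\mathrm{can}}$ whose associated height is $\hhat_{\bff_\l}(\bfc(\l))$. Comparing with the naive adelic metric on $\cO_X(d_\bfa\cdot\eta)$---which yields height $d_\bfa h(\l)+O(1)$---and noting that the log-ratio of two continuous metrics on the same line bundle is a continuous function on the compact space $X^{\mathrm{an}}_{\bC_v}$, nontrivial at only finitely many $v$, we sum to obtain $\hhat_{\bff_\l}(\bfc(\l)) = d_\bfa h(\l)+O(1) = \hhat_\bff(\bfc)h(\l)+O(1)$.
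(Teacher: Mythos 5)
Your proposal is correct and follows essentially the same route as the paper: replace $\bfc$ by a high iterate so that $d_\bfc>m$, identify $\bfc^{\ast}\cO_{\bP^1}(1)\cong\cO_X(d_\bfa\eta)$, define iterated (Fubini--Study/intersection) metrics via $\Phi_{\bfc,n}=[A_{\bfc,n}:B_{\bfc,n}]$, use the resultant identities for the two-sided estimate $C_1 M_n^d\le M_{n+1}\le C_2 M_n^d$ away from $\eta$, and use $s\ge2$ together with $c_{P,0},c_{Q,0}\in K^{\ast}$ to get a uniform-in-$n$ bound near $\eta$ so the $d^{-n}\log M_n$ converge uniformly, yielding a semipositive adelic metric on $\cO_X(d_\bfa\eta)$ whose height equals $\hhat_{\bff_\l}(\bfc(\l))$. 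The paper makes the uniformity near $\eta$ precise via Lemma~\ref{estimate for f} and Propositions~\ref{lem:infinite uniform}--\ref{6 uniform large l}, but your sketch captures exactly that mechanism, so there is no substantive gap.
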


\begin{remark}
\label{important remark Ingram}
We actually show that the function $\hhat_{\bff_\l}(\bfc(\l))/
\hhat_{\bff}(\bfc)$ is a height function coming from a metrized
line bundle on $X$. In the case proved by Ingram that is not covered by our
theorem, i.e.  the case where $\bff$ is a polynomial with parameter space $X =
\bP^1$ and  $\bfc\in \bP^1(F)$ without any further restriction, it would
be interesting to see whether or not 
$\hhat_{\bff_\l}(\bfc(\l))/\hhat_{\bff}(\bfc)$ also gives rise to a
height function coming from a metrized line bundle on $\bP^1.$ 
\end{remark}

Theorem~\ref{thm:uniform convergence} will follow from
Proposition~\ref{finally uniformity} proved below. The proof of
Theorem~\ref{thm:uniform convergence} follows 
the idea described in Section~\ref{subsec:metrized line bundle} and
will be given later. The following two sections are devoted to the
proof of Proposition~\ref{finally uniformity}. 




\section{Growth of the iterates in fibers above $X$}
\label{specializations}

We continue with the notation from the previous Section.

Recall that we have fixed a uniformizer $u$ of $\eta$. Then 
there exists a Zariski open neighborhood $Z$ of
$\eta$  such that the uniformizer $u$ is a regular function on
$Z.$ We fix such a neighborhood of $\eta.$ 
For a given place $v\in\Omega_K$, $Y(\C_v)$ has a topology called the 
$v$-adic topology induced by the absolute value $|\cdot|_v$ on $\C_v.$
Each $\bfa\in \bfA$ yields a continuous function $\bfa : Y(\C_v) \to
\C_v$ with respect to the $v$-adic topology. For any large $L > 0$, we let $V_{L,v} \subset Z(\C_v)$ be the $v$-adic open neighborhood of $\eta$ containing all points $\l\in Z(\C_v)$ such that $|u(\l)|_v<\frac{1}{L}$. If
there is no danger of confusion, we drop the subscript $v$ below. 
Denote the complement of $V_L$ by $U_L := X(\C_v)\setminus V_L \subset
Y(\C_v).$ It follows that $\bfa$ is 
bounded on $U_L.$ Let $n = \deg(\bfa)$ and put $g_{\bfa} = \bfa
u^n$. By increasing $L$ if necessary, we may assume that $g_{\bfa}$
is bounded on $V_L$; let $C>0$ be an upper bound for $|g_\bfa|_v$ on $V_L$.  Thus for each $\l$ in the boundary of $V_L$ we have that $|\bfa(\l)|_v\le C L^n$. Furthermore, for $L$ sufficiently large, the maximum of $|\bfa(\l)|_v$ on $U_L$ is attained on the boundary of $U_L$ (which is the same as the boundary of $V_L$) and thus $|\bfa(\l)|_v\le C L^n$ for all $\l \in U_L$. Note that even though apriori, $C$ depends on $L$ (and also on $\bfa$ and $v$), the dependence on $L$ is not essential since once we replace $L$ with a larger number $L'$, the same value of $C$ would work as an upper bound for $g_\bfa$ on $V_{L'}$ because $V_{L'}\subset V_L$ (this fact will be used in our proof). More generally, for a
nonempty finite subset $T$ of $\bfA$ and large $L$, by shrinking $V_L$
if necessary, we 
may assume that there exists a positive constant $C$ depending only on $T$ and $v$ such that 
the inequality $|\bfa(\l)|_v \le C L^n$ holds for all $\bfa\in T$ and $\l\in
U_L$ where $n = \max_{\bfa\in T}\deg(\bfa)$. Furthermore, for any
polynomial $g(x)\in \C_v[x]$ we define  $|g|_v$ to be the maximum 
of the $v$-adic norms of its coefficients.  

\begin{prop}
 \label{lem:finite uniform}
Let $v\in\Omega_K$ be any place, and let
$M_n(\lambda):=\max\{|A_{\bfc,n}(\lambda)|_v,
|B_{\bfc,n}(\lambda)|_v\}$ for each $n\ge 0$ and $\l\in Y(\C_v)$. Let
$L\ge 1$ be a large positive number and let $U_L\subset Y(\C_v)$ be
determined as above. 
Then, there exist positive constants $C_1, C_2$ depending only on 
$v$, $L$ and on the coefficients $\bfc_{P,i}$ of $P$ and $\bfc_{Q,j}$
of $Q$  such that for all $n\ge 0$ we have
$$
   C_1 M_n(\lambda)^{d} \le  M_{n+1}(\lambda) \le C_2 M_n(\lambda)^{d}
$$
for all $\lambda\in U_L .$
\end{prop}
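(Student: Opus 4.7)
The plan is to establish the upper and lower bounds separately, with both estimates reducing to the fact that all coefficients (in $\bfA$) of the homogeneous forms $P,Q$ and of the fixed polynomials $S,T,U,V \in \bfA[X,Y]$ from the resultant identities \eqref{resultant1}--\eqref{resultant2} are uniformly bounded on $U_L$. Concretely, since these form a finite subset of $\bfA$, the setup preceding the proposition (applied with $L$ possibly enlarged) produces a constant depending only on $v$, $L$, and the $\bfc_{P,i}$, $\bfc_{Q,j}$ (and hence on $S,T,U,V$, which are determined by $P$ and $Q$) such that $|\bfc(\lambda)|_v \le C\,L^{\deg(\bfc)}$ for every such coefficient $\bfc$ and every $\lambda \in U_L$.

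For the upper bound, since $P(X,Y)$ is homogeneous of degree $d$ in $X,Y$ with coefficients $\bfc_{P,i}$, substituting $X = A_{\bfc,n}(\lambda)$ and $Y = B_{\bfc,n}(\lambda)$ and applying the triangle (or ultrametric) inequality together with the bound above yields
$$|A_{\bfc,n+1}(\lambda)|_v = |P(A_{\bfc,n}(\lambda),B_{\bfc,n}(\lambda))|_v \le C'\cdot M_n(\lambda)^d,$$
and analogously $|B_{\bfc,n+1}(\lambda)|_v \le C''\cdot M_n(\lambda)^d$. Taking the maximum gives $M_{n+1}(\lambda) \le C_2\cdot M_n(\lambda)^d$.

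For the lower bound, which is the crux of the argument, I would substitute $X = A_{\bfc,n}(\lambda)$ and $Y = B_{\bfc,n}(\lambda)$ into \eqref{resultant1}--\eqref{resultant2} and use the recursive relations \eqref{recursiveA}--\eqref{recursiveB} to obtain
\begin{align*}
A_{\bfc,n}(\lambda)^t &= S(A_{\bfc,n},B_{\bfc,n})(\lambda)\,A_{\bfc,n+1}(\lambda) + T(A_{\bfc,n},B_{\bfc,n})(\lambda)\,B_{\bfc,n+1}(\lambda), \\
B_{\bfc,n}(\lambda)^t &= U(A_{\bfc,n},B_{\bfc,n})(\lambda)\,A_{\bfc,n+1}(\lambda) + V(A_{\bfc,n},B_{\bfc,n})(\lambda)\,B_{\bfc,n+1}(\lambda).
\end{align*}
Since $S,T,U,V$ are homogeneous of degree $t-d$ in $X,Y$ with coefficients bounded on $U_L$ as above, each of $|S(A_{\bfc,n},B_{\bfc,n})(\lambda)|_v,\ldots,|V(A_{\bfc,n},B_{\bfc,n})(\lambda)|_v$ is bounded by a uniform constant times $M_n(\lambda)^{t-d}$. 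Applying the triangle (or ultrametric) inequality to both identities gives
$$M_n(\lambda)^t \le C'''\cdot M_n(\lambda)^{t-d}\cdot M_{n+1}(\lambda),$$
and since $M_n(\lambda) > 0$ on $U_L$ (because $A_{\bfc,n}$ and $B_{\bfc,n}$ are coprime in $\bfA$ by Proposition~\ref{degrees A and B 2}), dividing through by $M_n(\lambda)^{t-d}$ yields $M_{n+1}(\lambda) \ge C_1\cdot M_n(\lambda)^d$. The main obstacle is exactly this lower bound: it relies crucially on condition $(3)$ of Theorem~\ref{general main} ($R(\bff)\in K^\ast$), which is what permits us to take the \emph{fixed} polynomials $S,T,U,V \in \bfA[X,Y]$ with right-hand sides $X^t$ and $Y^t$; without this, the analogous identities would hold only after multiplication by $R(\bff)$, which could vanish on $Y$ and spoil the uniformity of the constant $C_1$ in $\lambda$.
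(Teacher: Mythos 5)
Your proof is correct and follows essentially the same route as the paper: the upper bound via the triangle inequality and coefficient bounds on $U_L$, and the lower bound by substituting into the resultant identities \eqref{resultant1}--\eqref{resultant2} and bounding $S,T,U,V$ on $U_L$. Your closing remark about why a constant resultant is essential (to have fixed $S,T,U,V$ uniformly over $Y$) and the observation that coprimality of $A_{\bfc,n},B_{\bfc,n}$ guarantees $M_n(\lambda)>0$ are both correct and make explicit points the paper leaves implicit.
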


\begin{proof}
Let $\lambda\in U_L$ be given. 
The proof uses standard techniques in height theory. Before we deduce
the upper bound in the above inequalities, we first note that
$$ |P_{\lambda}|_v = \max\{|c_{P,i}(\lambda)|_v : i=0,\ldots, d_P\}\text{ and }|Q_{\lambda}|_v = \max\{|c_{Q,j}(\lambda)|_v : j=0,\ldots, d_Q\}.$$
Therefore there exists a constant $C_3$ depending only on $L$ and on the coefficients of $P_\l$  such
that  $|P_{\lambda}|_v \le C_3 L^{m_1d_P}$ and $|Q_\l|_v\le C_3 L^{m_2d_Q}$ for all $\lambda\in U_L$.
For any integer $k$, we  use the following
notation 
$$\e_v(k) = \begin{cases} k & \text{if $v$ is archimedean,} \\
                          1 & \text{if $v$ is nonarchimedean.}
\end{cases}$$
By (\ref{recursiveA}) and (\ref{recursiveB}), we have that 
\begin{align*}
|A_{\bfc,n+1}(\l)|_v & =  |P_\l\left(A_{\bfc,n}(\lambda),B_{\bfc,n}(\lambda)\right)|_v
\\
& \le   \left(\e_v(d_P+1) |P_\l|_v\right) M_n(\lambda)^{d_P}
\\
& \le \left(\e_v(d_P+1) C_3 L^{m_1d_P} \right) M_n(\lambda)^{d_P}
\end{align*}
and
\begin{align*}
|B_{\bfc,n+1}(\lambda)|_v & \le 
|B_{\bfc,n}(\lambda)^{s}Q_\l(A_{\bfc,n}(\lambda),B_{\bfc,n}(\lambda))|_v \\
& \le \left(\e_v(d_Q+1)C_3 L^{m_2d_Q}\right) M_n(\lambda)^{d_P}.
\end{align*}
So, the right-hand side inequality from the conclusion of
Proposition~\ref{lem:finite uniform} holds with $C_{2} = \e_v(d_P+1) C_3\left(L^{m_1d_P}+L^{m_2d_Q}\right)$, for example. 

Next, we deduce a complementary inequality. For this, we substitue
$X = A_{\bfc,n}$ and $Y = B_{\bfc,n}$ into  (\ref{resultant1}) and
(\ref{resultant2}). Then, as in the proof of Proposition~\ref{degrees
  A and B 2} we have 
\begin{align*}
S(A_{\bfc,n},B_{\bfc,n})  A_{\bfc,n+1} +
T(A_{\bfc,n},B_{\bfc,n}) B_{\bfc,n+1} & = A_{\bfc,n}^t
\quad \text{and} \\
U(A_{\bfc,n},B_{\bfc,n})  A_{\bfc,n+1} +
V(A_{\bfc,n},B_{\bfc,n})  B_{\bfc,n+1}& =  B_{\bfc,n}^t.  
\end{align*}
Note that, as polynomials in variables $X$ and $Y$, the coefficients of
$S, T, U$ and $V$ are in $\bfA.$ Let the maximal degrees of
coefficients  of $S, T, U$ and $V$ be $\ell$. Then for  $\lambda\in U_L$ there exists a positive real  constant $C_4$ such that
$$ \max\{|S|_v, |T|_v, |U|_v, |V|_v\} \le C_4 L^{\ell}.$$
Applying triangle inequality, we have
\begin{align*}
| A_{\bfc,n}(\lambda)|_v^t & \le \e_v(t-d_P+1)|S|_v
M_n(\lambda)^{t-d_P} M_{n+1}(\l) +  \e_v(t-d_P +1) |T|_v
M_n(\lambda)^{t-d_P}M_{n+1}(\l)   \\
& \le 2 \e_v(t-d_P+1)  C_4 L^{\ell} M_n(\l)^{t-d_P} M_{n+1}(\lambda) 
\end{align*}
and similarly,
\begin{align*}
|B_{c,n}(\lambda)|_v^t & \le  2 \e_v(t-d_P+1)  C_4 L^{\ell} M_n(\l)^{t-d_P} M_{n+1}(\lambda) 
\end{align*}
Hence,
$$ 
M_n(\lambda)^t \le 2 \e_v(t-d_P+1)  C_4 L^{\ell}
M_{n}(\lambda)^{t-d_P} M_{n+1}(\lambda) 
$$
and thus the desired lower bound from Proposition~\ref{lem:finite
  uniform} is obtained by taking   $C_1 = 1/(2 \e_v(t-d_P+1)  C_4
L^{\ell})$ and note that $d_P = d$. 
\end{proof}

Next, we fix  $v\in\Omega_K$ and show that $M_n(\lambda)$ also
satisfies similar relations as 
stated in Proposition~\ref{lem:finite uniform} for $\l \in V_L$ with
$L$ large enough. In the following the notation $\vlim_{\l\to
  \eta}$ means that the limit is taken for the point $\l$
approaching $\eta$ with respect to the $v$-adic topology. 
We first observe that
$$ \vlim_{\lambda \to \eta}
\frac{|P_{\lambda}(\bfc(\lambda))|_v}{|\bfc(\lambda)|_v^{d_P}} = |c_{P,0}|_v \quad
\text{and}\quad  \vlim_{\lambda \to \eta}
\frac{|Q_\l(\bfc(\lambda))|_v}{|\bfc(\lambda)|_v^{d_Q}} = |c_{Q,0}|_v .$$ 
Indeed, the assertions follow from  the choice of $d_\bfc =\deg\bfc$ such that 
$i d_\bfc > \deg c_{P,i}$ for $i = 1, \ldots, d_P $ and $jd_\bfc>\deg c_{Q,j}$ for $j=1,\ldots,d_Q$. 
Furthermore,  we have $\vlim_{\lambda\to \eta}
|\bfc(\lambda)|_v|u(\lambda)|_v^{d_\bfc} = |c_{\bfa}/c_{\bfb}|_v$ and
$\vlim_{\lambda\to \eta}
|f_{\lambda}(\bfc(\lambda))|_v/|\bfc(\lambda)|_v^s =
\left|c_{P,0}/c_{Q,0}\right|_v.$
It follows that there exist positive real numbers $L_1>1$   and $\d_1 < 1$ 
such that for all $\lambda \in V_{L_1}\setminus\{\eta\}$ we have
\begin{align}
  \label{asymptotic f}
\d_1 |\bfc(\lambda)|_v^s \le   & |f_{\lambda}(\bfc(\lambda))|_v \le \frac{1}{\delta_1} |\bfc(\lambda)|_v^s 
  \\
\label{asymptotic bfc}
  \delta_1 |z(\lambda)|_v^{d_\bfc} \le  & |\bfc(\lambda)|_v \le
  \frac{1}{\delta_1} |z(\lambda)|_v^{d_\bfc},
\end{align}
where, for convenience, we set $z = 1/u$ so that $z(\l) = 1/u(\l)$ for $\l\in
V_{L}\setminus\{\eta\}.$ Furthermore, without loss of generality, we may assume that $\d_1<\frac{|c_{P,0}|_v}{2|c_{Q,0}|_v}$. 

\begin{lemma}
  \label{estimate for f}
  Let   $L_2\ge L_1$ be a real number.  Then there exists a real number
  $L_3\ge L_2$   such that for all $\bfx\in F$ satisfying
  $\left|\bfx(\l)/\bfc(\l)\right|_v>2$ for all $\l\in V_{L_2}\setminus \{\eta\}$, we have
  $$ |f_{\lambda}(\bfx(\lambda))|_v >   \delta_1 |\bfx(\lambda)|_v^s $$
  for all $\lambda\in V_{L_3}\setminus\{\eta\}.$
\end{lemma}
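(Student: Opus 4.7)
The plan is to show that for $\lambda$ sufficiently close to $\eta$, the hypothesis $|\bfx(\lambda)/\bfc(\lambda)|_v>2$ forces $|\bfx(\lambda)|_v$ to be so large that $f_\lambda$ evaluated at $\bfx(\lambda)$ is dominated by its leading behaviour, namely by $(c_{P,0}/c_{Q,0})\,\bfx(\lambda)^s$. The key quantitative input is the inequality $\delta_1<|c_{P,0}|_v/(2|c_{Q,0}|_v)$ imposed just before the lemma: this leaves enough slack to lose a multiplicative factor when approximating $f_\lambda(\bfx(\lambda))$ by its leading term and still beat $\delta_1$.

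First I would combine the hypothesis with the lower estimate in \eqref{asymptotic bfc} to obtain
$$|\bfx(\lambda)|_v \;>\; 2|\bfc(\lambda)|_v \;\ge\; 2\delta_1\,|z(\lambda)|_v^{d_{\bfc}}$$
for all $\lambda\in V_{L_2}\setminus\{\eta\}$, and note that this lower bound depends only on $\bfc$, not on the particular $\bfx$; this uniformity is precisely what will allow $L_3$ to be chosen independently of $\bfx$. Second, I would bound each nonleading coefficient of $P_\lambda$ and $Q_\lambda$ near $\eta$: by the definition of $m_1$ in \eqref{definition of the m's}, the function $c_{P,i}$ has a pole of order at most $im_1$ at $\eta$, so $c_{P,i}\,u^{im_1}$ extends to a regular function at $\eta$ and is therefore bounded on some $V_L$. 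This yields $|c_{P,i}(\lambda)|_v\le C|z(\lambda)|_v^{im_1}$ for $i=1,\ldots,d_P$, and similarly $|c_{Q,j}(\lambda)|_v\le C|z(\lambda)|_v^{jm_2}$ for $j=1,\ldots,d_Q$; the leading coefficients $c_{P,0}$ and $c_{Q,0}$ are already nonzero constants.

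Third, for each $i\ge 1$ I would compare a lower-order term to the leading one via
$$\frac{|c_{P,i}(\lambda)\,\bfx(\lambda)^{d_P-i}|_v}{|c_{P,0}|_v\,|\bfx(\lambda)|_v^{d_P}} \;\le\; \frac{C}{|c_{P,0}|_v(2\delta_1)^i\,|z(\lambda)|_v^{i(d_{\bfc}-m_1)}},$$
which tends to $0$ as $\lambda\to\eta$ because $d_{\bfc}>m\ge m_1$; an analogous estimate applies to the terms of $Q_\lambda$, using $d_{\bfc}>m_2$. I would then pick $\varepsilon>0$ small enough that $(1-\varepsilon)|c_{P,0}|_v>(1+\varepsilon)\delta_1|c_{Q,0}|_v$, which is possible thanks to the constraint on $\delta_1$, and choose $L_3\ge L_2$ large enough that for all $\lambda\in V_{L_3}\setminus\{\eta\}$ the ratios above are at most $\varepsilon/d_P$ (resp.\ $\varepsilon/d_Q$). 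The triangle inequality at archimedean $v$, or the ultrametric inequality at nonarchimedean $v$ (which in fact forces equality with the leading term there), then gives
$$|P_\lambda(\bfx(\lambda))|_v\ge (1-\varepsilon)|c_{P,0}|_v\,|\bfx(\lambda)|_v^{d_P},\qquad |Q_\lambda(\bfx(\lambda))|_v\le (1+\varepsilon)|c_{Q,0}|_v\,|\bfx(\lambda)|_v^{d_Q},$$
and dividing produces $|f_\lambda(\bfx(\lambda))|_v>\delta_1|\bfx(\lambda)|_v^s$. The polynomial case $d_Q=0$ is covered by the same argument with $Q=1$.

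No step presents a serious obstacle: the lemma is a routine ``leading term dominates'' estimate near the pole $\eta$. The one point that requires a little care is the uniformity of $L_3$ in $\bfx$, but this is automatic, because every lower bound on $|\bfx(\lambda)|_v$ used in the argument flows from the universal inequality $|\bfx(\lambda)|_v>2|\bfc(\lambda)|_v$, and consequently all constants depend only on $\bfc$, on the coefficients of $\bff$, and on the place $v$.
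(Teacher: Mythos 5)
Your proof is correct, and it reaches the conclusion by a route that is genuinely different from — and somewhat more elementary than — the paper's. The paper expands $f_\lambda(x)$ as a formal Laurent series $\sum_{k\ge 0}\alpha_k x^{s-k}$ in $x^{-1}$, recursively bounds the coefficients $\beta_j$ of $Q_\lambda^{-1}$ via an explicit recursion, derives geometric decay for $|\alpha_k(\l)\bfc(\l)^{-k}|_v$, and then controls the infinite tail of the series by Cauchy--Schwarz (archimedean) or the ultrametric inequality (nonarchimedean). You instead work with the finite polynomial sums $P_\lambda$ and $Q_\lambda$ directly, bounding the numerator from below and the denominator from above via a term-by-term comparison to the leading monomials and dividing. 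This avoids the Laurent-series apparatus, the $\beta_j$ recursion, and the square-summability / Cauchy--Schwarz step, replacing them with a straightforward triangle or ultrametric estimate on finite sums; the price is two separate estimates (one for $P$, one for $Q$) and the small $\varepsilon$ bookkeeping, and it uses the slightly weaker hypothesis $\delta_1<|c_{P,0}/c_{Q,0}|_v$ rather than the paper's $\delta_1<|c_{P,0}|_v/(2|c_{Q,0}|_v)$. The uniformity of $L_3$ in $\bfx$ is handled identically in both proofs: the only lower bound used on $|\bfx(\l)|_v$ is the universal one coming from $|\bfx(\l)/\bfc(\l)|_v>2$ together with \eqref{asymptotic bfc}.
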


\begin{proof}
 Let the  Laurent series expansion in $x^{-1}$ of   $f_\l(x)$
 be as follows
 $$ f_\l(x) = \frac{P_\l(x)}{Q_\l(x)} = 
\sum_{k \ge 0} \a_{k} x^{s-k} $$
where $\a_k \in \bfA$ and $\a_0 (\lambda)=
c_{P,0}/c_{Q,0} \in K^{\ast}$. We estimate next $|\a_k(\l)|_v$ for $\l \in V_{L_2}\setminus\{\eta\}$ as $k$ varies. For this  we write $x^{d_Q}Q_\l(x)^{-1} = \sum_{j\ge 0} \b_j x^{-j}\in
\bfA[[x^{-1}]]$ and we claim that there exist positive real numbers $C_5$ and $C_6$ such that
\begin{equation}
\label{growth for beta}
|\b_j(\l)|_v \le C_5\left(C_6 |z(\l)|_v^{m_2}\right)^j.
\end{equation}
Indeed, \eqref{growth for beta} follows from the fact that $\b_0=1/c_{Q,0}$ while for each $j\ge 1$ we have
$$-c_{Q,0}\b_j=\sum_{\substack{i_1+i_2=j\\1\le i_1\le d_Q\\ 0\le
    i_2\le j-1}} c_{Q,i_1} \,\b_{i_2}.$$
So, using that $|c_{Q,i}(\l)|_v=O\left(|z(\l)|_v^{m_2i}\right)$, 
an easy induction finishes the proof of \eqref{growth for beta}. On
the other hand, we have 
$$\a_k(\lambda) = \sum_{\substack{i_1+i_2=k\\0\le i_1\le d_P\\ 0\le i_2\le d_Q}}
\b_{i_2}(\l) c_{P,i_1}(\lambda) .$$
In particular, $\a_0=c_{P,0}/c_{Q,0}$. Using \eqref{growth for beta} coupled with the fact that $|c_{P,i}(\l)|_v=O\left(|z(\l)|_v^{m_1i}\right)$ we get that there exist positive real numbers $C_7$ and $C_8$ (independent of $k$) such that
$$|\a_k(\l)|_v\le C_7 \left(C_8|z(\l)|^{m_1+m_2}_v\right)^k\text{ for all }k.$$
Since $|\bfc(\l)|_v\ge \d_v |z(\l)|_v^{d_\bfc}$ and $d_\bfc> m = m_1+m_2$ we obtain that for sufficiently large $|z(\l)|_v$ we have
$\max\{|\a_k(\lambda)\bfc(\lambda)^{-k}|_v \text{ : } k \in \N\} < |\a_0|_v/2$  and furthermore if $v$ is
archimedean, then $\sum_{k\ge 1}|\a_k(\lambda)\bfc(\lambda)^{-i}|_v^2$
is convergent and bounded above by  $ |\a_0|_v^2/4$.

We let $L_3>L_2$ be a sufficiently large real number  such that for
$|z(\lambda)|_v > L_3$ we have 
$$
    \sum_{k\ge 1}     |\a_k(\l)\bfc(\lambda)^{-k}|_v^2 <  |\a_0|_v^2/4  \text{ if $v$
      is archimedean,} 
 $$
and
$$
 \max\{\left|\a_k(\lambda)\bfc(\lambda)^{-k}\right|_v \text{ : } k \ge 1\} <
 \frac{|\a_0|_v}{2}  \text{ if $v$ is non-archimedean.}
$$

Now let $\lambda\in V_{L_3}\setminus\{\eta\}$ and $\bfx\in F$ such that
$|\bfx(\l)/\bfc(\l)|_v > 2$. Write 
$$ \sum_{k \ge 1} \a_k(\l) \bfx(\lambda)^{-k} = \sum_{k\ge 1}
\a_k(\lambda) \bfc(\lambda)^{-k}
\left(\frac{\bfx(\lambda)}{\bfc(\lambda)}\right)^{-k}. $$ 

If $v$ is non-archimedean, then we have
$$\left| \sum_{k \ge 1} \a_k(\l) \bfx(\lambda)^{-k}\right|_v \le 
\max\left\{\left|\a_k(\lambda) 
\bfc(\lambda)^{-k}\left(\frac{\bfx(\lambda)}{\bfc(\lambda)}\right)^{-k}\right|_v\text{ : }
k\ge 1\right\} \le \frac{|\a_0|_v}{2}.$$ Therefore,
$$ \left|\frac{f_{\lambda}(\bfx(\l))}{\bfx(\l)^s}\right|_v = \left|\a_0 + \sum_{k\ge
    1}\a_k(\lambda) \bfx(\l)^{-k}\right|_v =  |\a_0|_v  \ge \d_1.$$

Now assume  $v$ is archimedean. By the choice
of $L_3$, the two sequences of complex numbers
$\left(\a_k(\lambda)\bfc(\lambda)^{-k}\right)_{k\ge 1}$ and  $\left(
  (\bfx(\lambda)/\bfc(\lambda))^{-k}\right)_{k\ge 1}$ are both square
summable for each $\l\in V_{L_3}\setminus\{\eta\}$. Hence, by the Cauchy-Schwartz inequality  we see that
\begin{align*}
  \left|\sum_{k\ge 1} \a_k(\lambda)\bfc(\lambda)^{-k}\left( \frac{\bfx(\lambda)}{\bfc(\lambda)}\right)^{-k} \right|_v^2 & \le \left(\sum_{k\ge
      1} |\a_k(\lambda)\bfc(\lambda)^{-k}|_v^2 \right)
  \left(\sum_{k\ge1}\left|\frac{\bfx(\lambda)}{\bfc(\lambda)}\right|_v^{-2k} \right)
  \\
  & \le \frac{|\a_0|_v^2}{4}\cdot \left(\sum_{k\ge1} \frac{1}{4^k}\right) \le \frac{|\a_0|_v^2}{4}. 
\end{align*}
Hence,
$$ \left|\frac{f_{\lambda}(\bfx(\l))}{\bfx(\l)^s}\right|_v \ge \left| |\a_0|_v -
  \sqrt{\frac{|\a_0|_v^2}{4}}\right| \ge \frac{|\a_0|_v}{2}.$$ 
In both cases, we have shown that for all $\lambda\in V_{L_3}$ 
we have $|f_{\lambda}(\bfx(\lambda))|_v \ge \d_1
|\bfx(\lambda)|_v^s$ as desired.  
\end{proof}

\begin{prop}
\label{lem:infinite uniform}
There exists a number $L_3\ge L_1$ depending only on the coefficients
of $P_\l$, $Q_\l$ (and on $L_1$)  such that for all $n \in \N$ and all $\lambda\in V_{L_3}\setminus\{\eta\}$
we have 
\begin{equation}
\label{5 22}
|f_\lambda^n(\bfc(\l))|_v \ge \d_1^{(s^n-1)/(s-1)} |\bfc(\l)|_v^{s^n}
\ge \d_1^{(s^{n+1}-1)/(s-1)}  |z(\l)|_v^{d_\bfc s^{n}}. 
\end{equation}
\end{prop}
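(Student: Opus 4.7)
The plan is to induct on $n$, with Lemma~\ref{estimate for f} driving the inductive step. Only the first inequality in \eqref{5 22} needs real work; the second follows by raising \eqref{asymptotic bfc} to the $s^n$-th power and using the arithmetic identity $(s^n - 1)/(s - 1) + s^n = (s^{n+1} - 1)/(s - 1)$.

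First I would fix $L_3 \ge L_1$ large enough that two conditions hold on $V_{L_3}\setminus\{\eta\}$: (a) Lemma~\ref{estimate for f}, applied with $L_2 := L_1$, delivers its conclusion there; and (b) $|\bfc(\l)|_v > (2/\d_1)^{1/(s-1)}$, equivalently $\d_1 |\bfc(\l)|_v^{s-1} > 2$. Condition (b) is achievable from \eqref{asymptotic bfc} together with $|z(\l)|_v \to \infty$ as $\l \to \eta$.

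The case $n = 0$ of the first inequality is tautological, and the case $n = 1$ is precisely \eqref{asymptotic f}. For the inductive step from $n \ge 1$ to $n + 1$, set $\bfx_n := f^n(\bfc) \in F$ and assume $|\bfx_n(\l)|_v \ge \d_1^{(s^n - 1)/(s - 1)}|\bfc(\l)|_v^{s^n}$ on $V_{L_3}\setminus\{\eta\}$. Dividing by $|\bfc(\l)|_v$ and invoking (b),
$$\left|\frac{\bfx_n(\l)}{\bfc(\l)}\right|_v \ge \d_1^{(s^n - 1)/(s - 1)}|\bfc(\l)|_v^{s^n - 1} = \left(\d_1 |\bfc(\l)|_v^{s-1}\right)^{(s^n - 1)/(s - 1)} > 2.$$
Applying Lemma~\ref{estimate for f} to $\bfx_n$ then yields
$$|f_\l(\bfx_n(\l))|_v > \d_1 |\bfx_n(\l)|_v^s \ge \d_1^{1 + s(s^n - 1)/(s - 1)}|\bfc(\l)|_v^{s^{n+1}} = \d_1^{(s^{n+1} - 1)/(s - 1)}|\bfc(\l)|_v^{s^{n+1}},$$
which completes the induction.

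The main obstacle I anticipate is a small mismatch with the statement of Lemma~\ref{estimate for f}: it requires $|\bfx/\bfc|_v > 2$ uniformly on the larger set $V_{L_2}\setminus\{\eta\}$, whereas the induction only supplies this bound on the smaller set $V_{L_3}\setminus\{\eta\}$. However, the proof of the lemma uses only the pointwise inequality $|\bfx(\l)/\bfc(\l)|_v > 2$ at each fixed $\l \in V_{L_3}\setminus\{\eta\}$ (as is evident from the last paragraph of its proof), so one applies the pointwise version of the lemma and the induction goes through.
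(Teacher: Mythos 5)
Your proof is correct and follows essentially the same route as the paper: establish the base case $n=1$ from \eqref{asymptotic f}, enforce $\d_1|\bfc(\l)|_v^{s-1}>2$ near $\eta$, and run the induction by applying Lemma~\ref{estimate for f} to $\bfx_n=\bff^n(\bfc)$. Your closing observation about the hypothesis of Lemma~\ref{estimate for f} being effectively pointwise (the proof of the lemma only uses $|\bfx(\l)/\bfc(\l)|_v>2$ at each fixed $\l\in V_{L_3}\setminus\{\eta\}$) is apt and correctly patches a mismatch the paper passes over silently.
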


\begin{proof}
Firstly, we note that if $|z(\l)|_v> L_3\ge L_1$, then \eqref{asymptotic bfc} yields the second inequality from \eqref{5 22}. So, we are left to prove the first inequality in \eqref{5 22}.

We claim that there exists a real number $L_2$ larger than $L_1$ which also satisfies the following properties:
\begin{enumerate}
\item[(a)] if $\l\in V_{L_2}\setminus\{\eta\}$, then $\d_1 |\bfc(\l)|_v^{s-1} >  2.$
\item[(b)] if $\l\in V_{L_2}\setminus\{\eta\}$, then $|f_\l(\bfc(\l))|_v \ge\d_1 |\bfc(\l)|_v^s$.
\end{enumerate} 
We can obtain inequality (a) above since if $|z(\l)|_v>L_2\ge L_1$, then 
\begin{equation}
\label{asymptotic bfc 2}
|\bfc(\l)|_v\ge \d_1 |z(\l)|_v^{d_\bfc}>\d_1 L_2^{d_\bfc},
\end{equation}
and thus if $L_2>\left(\frac{2}{\d_1^s}\right)^{1/d_\bfc (s-1)}$, inequality (a) is satisfied.  
In order to  obtain inequality (b), using \eqref{asymptotic bfc 2}, it suffices to choose $L_2$ satisfying the inequality $\d_1 L_2^{d_\bfc}>L_1$ (or equivalently, $L_2>(L_1/\d_1)^{1/d_\bfc}$). Then we may employ \eqref{asymptotic f} and obtain inequality (b) above.

We  let $L_3\ge L_2$ be the real number satisfying the conclusion of Lemma~\ref{estimate for f}.   
Let $\lambda \in V_{L_3}\setminus\{\eta\}.$ The proof of the first inequality in the
conclusion of Proposition~\ref{lem:infinite uniform} is by induction
on $n \ge 1.$ The inequality 
\eqref{5 22} for $n=1$ is precisely inequality (b) above.

Next we prove the inductive step; so we assume \eqref{5 22} holds for some $n\ge 1$ and we will prove that
$$
\left|f_\lambda^{n+1}(\bfc(\l))\right|_v \ge \d_1^{(s^{n+1}-1)/(s-1)}
|\bfc(\l)|_v^{s^{n+1}}.
$$

By induction hypothesis, we know that 
$$
\left|f_\lambda^n(\bfc(\l))\right|_v \ge \d_1^{(s^n-1)/(s-1)}
|\bfc(\l)|_v^{s^n} \ge \d_1^{(s^{n+1}-1)/(s-1)} |z(\l)|_v^{d_\bfc s^n}. 
$$
We shall apply Lemma~\ref{estimate for f} to $\bfx(\l) =
f_\l^n(\bfc(\l))$. In order to do this we need to check  that
\begin{equation}
\label{hypothesis check}
\left|f_\l^n(\bfc(\l))/\bfc(\l)\right|_v > 2\text{ if } \l\in V_{L_2}\setminus\{\eta\}.
\end{equation}
Indeed, we notice that
\begin{align*}
\left|\frac{f_\l^n(\bfc(\l))}{\bfc(\l)}\right|_v & \ge
\d_1^{(s^n-1)/(s-1)} |\bfc(\l)|_v^{s^n - 1}\text{ by the inductive hypothesis}
\\
& \ge \left( \d_1  |\bfc(\l)|_v^{s-1}\right)^{(s^n-1)/(s-1)}
\\
 & > 2^{(s^n-1)/(s-1)} \text{ by inequality (a) above} 
\\
& \ge 2 \quad \text{since $s \ge 2$.}
\end{align*}
Now, by Lemma~\ref{estimate for f} applied to $\bfx(\l)=f_\l^n(\bfc(\l))$, we have
\begin{align*}
  |f_\l^{n+1}(\bfc(\l))|_v & = |f_\l(f_\l^n(\bfc(\l)))|_v
  \\
  & \ge \d_1 |f_\l^n(\bfc(\l))|_v^s \quad \text{by Lemma~\ref{estimate
      for f}}
  \\
  &  \ge \d_1\left( \d_1^{(s^n-1)/(s-1)} |\bfc(\l)|_v^{s^n} \right)^s
  \quad \text{by induction hypothesis,}
  \\
  & = \d_1^{(s^{n+1}-1)/(s-1)} |\bfc(\l)|_v^{s^{n+1}}.
\end{align*}
This concludes the inductive step and the proof of Proposition~\ref{lem:infinite uniform}.
\end{proof}

\begin{prop}
\label{6 uniform large l}
There exist real numbers $L_4\ge 1$, $C_9>0$ and $C_{10}>0$ such that
for all $\l\in V_{L_4}\setminus\{\eta\}$, we have
$$C_9M_n(\l)^{d}\le M_{n+1}(\l)\le C_{10}M_n(\l)^{d},$$
for all $n\in\N$.
\end{prop}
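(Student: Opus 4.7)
The plan is as follows. For $\lambda\in V_{L_3}\setminus\{\eta\}$, the lower bound of Proposition~\ref{lem:infinite uniform} forces $|A_{\bfc,n}(\lambda)/B_{\bfc,n}(\lambda)|_v = |f_\lambda^n(\bfc(\lambda))|_v$ to be very large, so that $|A_{\bfc,n}(\lambda)|_v$ dominates $|B_{\bfc,n}(\lambda)|_v$ and hence $M_n(\lambda)=|A_{\bfc,n}(\lambda)|_v$ for every $n\ge 0$ (provided $L_4$ is chosen large enough; for the base case $n=0$, this uses $|\bfc(\lambda)|_v = |\bfa/\bfb|_v\ge \d_1|z(\lambda)|_v^{d_{\bfc}}\gg 1$). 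Setting $\xi_n(\lambda):=B_{\bfc,n}(\lambda)/A_{\bfc,n}(\lambda)$, the recursions (\ref{recursiveA}) and (\ref{recursiveB}) rewrite as
\begin{equation*}
\frac{A_{\bfc,n+1}(\lambda)}{A_{\bfc,n}(\lambda)^{d_P}}=\sum_{i=0}^{d_P}\bfc_{P,i}(\lambda)\,\xi_n(\lambda)^i,\qquad \frac{B_{\bfc,n+1}(\lambda)}{A_{\bfc,n}(\lambda)^{d_Q}}=\sum_{j=0}^{d_Q}\bfc_{Q,j}(\lambda)\,\xi_n(\lambda)^j,
\end{equation*}
and the proposition reduces to showing that, uniformly in $n\ge 0$ and $\lambda\in V_{L_4}\setminus\{\eta\}$, the first sum is trapped in $[|\bfc_{P,0}|_v/2,\,2|\bfc_{P,0}|_v]$ and the second in $[|\bfc_{Q,0}|_v/2,\,2|\bfc_{Q,0}|_v]$. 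Since $d_Q<d_P$ and $|A_{\bfc,n}|_v$ is large, this automatically gives $M_{n+1}(\lambda)=|A_{\bfc,n+1}(\lambda)|_v$, and the desired bounds then hold with $C_9=|\bfc_{P,0}|_v/2$ and $C_{10}=2|\bfc_{P,0}|_v$.

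To bound the tail terms with $i\ge 1$ one combines the definition of $m_1$, which yields $|\bfc_{P,i}(\lambda)|_v\le C|z(\lambda)|_v^{m_1 i}$ near $\eta$, with Proposition~\ref{lem:infinite uniform}, which yields $|\xi_n(\lambda)|_v\le \d_1^{-(s^{n+1}-1)/(s-1)}|z(\lambda)|_v^{-d_{\bfc}s^n}$ for $n\ge 1$. For $n=0$ one uses instead $|\xi_0(\lambda)|_v\le \d_1^{-1}|z(\lambda)|_v^{-d_{\bfc}}$ coming from (\ref{asymptotic bfc}). Writing $r(\lambda):=\d_1^{-s/(s-1)}|z(\lambda)|_v^{-d_{\bfc}}$, one finds that $|\bfc_{P,i}(\lambda)\,\xi_n(\lambda)^i|_v\le C\bigl(|z(\lambda)|_v^{m_1}r(\lambda)^{s^n}\bigr)^i$ for $n\ge 1$, with an entirely analogous estimate for the $Q$-side (using $m_2$ in place of $m_1$) and for $n=0$.

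The main obstacle is uniformity in $n$: the factor $\d_1^{-(s^{n+1}-1)/(s-1)}$ blows up doubly exponentially in $n$ and must be beaten by the doubly exponential decay coming from $|z(\lambda)|_v^{-d_{\bfc}s^n}$. Choosing $L_4$ so large that $r(\lambda)<1$ on $V_{L_4}\setminus\{\eta\}$ (which needs $|z(\lambda)|_v>\d_1^{-s/(d_{\bfc}(s-1))}$) ensures that $r(\lambda)^{s^n}$ is maximal at $n=1$ for $n\ge 1$; enlarging $L_4$ further so that $|z(\lambda)|_v^{m_1}r(\lambda)^s=\d_1^{-s^2/(s-1)}|z(\lambda)|_v^{m_1-d_{\bfc}s}$ and its $Q$-analogue are smaller than a fixed small constant (possible because $d_{\bfc}s>\max(m_1,m_2)$ thanks to $d_{\bfc}>m\ge \max(m_1,m_2)$ and $s\ge 2$) forces each tail term with $i\ge 1$ to be bounded by $\rho^i$ for some $\rho<1$ and hence the full tail sum to have absolute value less than $|\bfc_{P,0}|_v/2$ (resp.\ $|\bfc_{Q,0}|_v/2$) for every $n\ge 1$. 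The case $n=0$ is handled by enlarging $L_4$ once more so that $\d_1^{-1}|z(\lambda)|_v^{-(d_{\bfc}-m_1)}<1$ (which is legitimate since $d_{\bfc}>m_1$), giving the analogous geometric decay directly. This completes the proof.
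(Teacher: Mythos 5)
Your proof is correct, and the broad outline (use Proposition~\ref{lem:infinite uniform} to force $M_n=|A_{\bfc,n}|_v$, then show $|\sum_i \bfc_{P,i}\xi_n^i|_v$ is pinned to $|c_{P,0}|_v$) matches the paper's. The interesting difference is in how you control the tail $\sum_{i\ge 1}\bfc_{P,i}(\l)\xi_n(\l)^i$, and there I think you take the harder road. The paper factors each term as
\[
\frac{\bfc_{P,i}(\l)}{f_\l^n(\bfc(\l))^i}
=\left(\frac{\bfc_{P,i}(\l)}{\bfc(\l)^i}\right)\left(\frac{\bfc(\l)}{f_\l^n(\bfc(\l))}\right)^i ,
\]
and this decouples the $n$-dependence from the $\l$-dependence entirely: the first factor does not depend on $n$ and tends to $0$ as $\l\to\eta$ because $d_\bfc>\deg(\bfc_{P,i})/i$, while the second factor is bounded by $1$ uniformly in $n$ via the estimate \eqref{hypothesis check} ($|f_\l^n(\bfc(\l))/\bfc(\l)|_v>2$, already established in the proof of Proposition~\ref{lem:infinite uniform}). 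No doubly exponential quantities ever appear, and the $n=0$ case needs no special treatment. Your proof instead bounds $|\xi_n|_v$ directly by the right-hand side of \eqref{5 22}, which forces you to show that the doubly exponential decay in $|z(\l)|_v^{-d_\bfc s^n}$ beats the doubly exponential blow-up in $\d_1^{-(s^{n+1}-1)/(s-1)}$; this works, thanks to your observation that $r(\l)<1$ on $V_{L_4}$ makes $r(\l)^{s^n}$ decreasing in $n$, but it requires a careful choice of $L_4$, and the $n=0$ case has to be treated separately because $r(\l)^{s^0}$ is not controlled by $r(\l)^s$. One minor point: the bound ``$\le\rho^i$'' you claim for the $i$-th tail term only follows after you fold the constant $C$ into the geometric ratio (e.g.\ take the threshold small enough that $C(|z(\l)|_v^{m_1}r(\l)^{s^n})\le\rho$ and use $C\alpha^i\le(C\alpha)\alpha^{i-1}$); the statement as written leaves this implicit. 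Also, the $Q$-side estimate and the remark that ``$d_Q<d_P$ and $|A_{\bfc,n}|_v$ large give $M_{n+1}=|A_{\bfc,n+1}|_v$'' are unnecessary, since $M_{n+1}=|A_{\bfc,n+1}|_v$ already follows from Proposition~\ref{lem:infinite uniform} applied at step $n+1$.
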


\begin{proof}
We let $L_3$ be defined as in Proposition~\ref{lem:infinite
  uniform} and let $L_5\ge L_3$ satisfy also the inequality
\begin{equation}
\label{(a)}
\d_1^{s+1} L_5^{d_\bfc s} > L_5^{d_\bfc},
\end{equation}
or equivalently $L_5>\d_1^{-(s+1)/(d_\bfc (s-1))}$.

We first claim that   $L_5$ satisfies the following inequality:
  \begin{equation}
    \label{Lv inequality}
    \d_1^{(s^{n+1}-1)/(s-1)} L_5^{d_\bfc s^n} > L_5^{d_\bfc}\quad \text{for all
       $n\in\N$.}
  \end{equation}
  Indeed, for $n=1$, \eqref{Lv inequality} follows from the choice of
  $L_5$ (see inequality \eqref{(a)} above). Now, assume that $n \ge 1$ and \eqref{Lv inequality} holds for
  $n$. Now,
  \begin{align*}
    \d_1^{(s^{n+2}-1)/(s-1)} L_5^{d_\bfc s^{n+1}} & = \d_1^{(s^{n+1}-1)/(s-1)}
    (\d_1^s L_5^{d_\bfc s})^{s^n} \\
    & > \d_1^{(s^{n+1}-1)/(s-1)}  (\d_1^{s+1} L_5^{d_\bfc s})^{s^n} \quad
    \text{since $\d_1 < 1$}
    \\
    & \ge  \d_1^{(s^{n+1}-1)/(s-1)} (L_5^{d_\bfc})^{s^n} \quad \text{by \eqref{(a)}}
    \\
    & > L_5^{d_\bfc} \quad \text{by assumption}.
  \end{align*}
  Hence, by induction we finish the proof of the claim. 

If $\l\in V_{L_5}\setminus\{\eta\}$, then by  
Proposition~\ref{lem:infinite uniform} and inequality \eqref{Lv inequality} we have
\begin{eqnarray*}
|f_{\l}^n(\bfc(\l))|_v\ge & \d_1^{(s^{n+1}-1)/(s-1)} |z(\l)|_v^{d_\bfc s^n}\\
& \ge   \d_1^{(s^{n+1}-1)/(s-1)} L_5^{d_\bfc s^n}\\
& \ge L_5^{d_\bfc} \ge 1,
\end{eqnarray*} 
which means that
$M_n(\l)=|A_{\bfc,n}(\l)|_v$. So, 
\begin{align*}
M_{n+1}(\l) & = |A_{\bfc,n+1}(\l)|_v\\
& = |A_{\bfc,n}(\l)^{d_P}|_v\cdot
\left|P_\l\left(1,\frac{B_{\bfc,n}(\l)}{A_{\bfc,n}(\l)}\right)
\right|_v\\
& =  M_n(\l)^{d}\cdot \left|
  P_\l\left(1,\frac{1}{f_\l^n(\bfc(\l))}\right)\right|_v\quad \text{since $d_P = d$} \\
& = M_n(\l)^{d}\cdot \left| c_{P,0} + \sum_{i=1}^{d_P}
  \frac{\bfc_i(\l)}{f_\l^n(\bfc(\l))^i}\right|_v \\
& =  M_n(\l)^{d}\cdot \left| c_{P,0} + \sum_{i=1}^{d_P}
  \left(\frac{\bfc_i(\l)}{\bfc(\l)^i}\right)\left(\frac{\bfc(\l)}{f_\l^n(\bfc(\l))}\right)^i\right|_v.
\end{align*}
Since we have $|f_\l^n(\bfc(\l))/\bfc(\l)|_v > 2$ whenever $\l\in V_{L_5}\setminus\{\eta\}\subset V_{L_2}\setminus\{\eta\}$ (by \eqref{hypothesis check}), we obtain
\begin{equation*}
 \left|   \sum_{i=1}^{d_P}\left(\frac{\bfc_i(\l)}{\bfc(\l)^i}\right) \left(\frac{\bfc(\l)}{f_\l^n(\bfc(\l))}\right)^i\right|_v
\le  \left(\sum_{i=1}^{d_P} \left|\frac{\bfc_i(\l)}{\bfc(\l)^i}\right|_v\right).
\end{equation*}
Because $d_\bfc >\deg(\bfc_i)/i$ we have
$$\left|\frac{\bfc_i(\l)}{\bfc(\l)^i}\right|_v\to 0\text{ as }\l\to \eta \text{ $v$-adically;}$$
so, there exists $L_4\ge L_5$ such that for $\l\in V_{L_4}\setminus\{\eta\}$  we have 
\begin{equation}
\label{6 22}
\frac{|c_{P,0}|_v}{2} \le
\left|P_\l\left(1,\frac{B_{\bfc,n}(\l)}{A_{\bfc,n}(\l)}\right)\right|_v
\le \frac{3 |c_{P,0}|_v}{2}.
\end{equation}
This concludes the proof of Proposition~\ref{6 uniform large l}.
\end{proof}




\section{Definition of the metrics}
\label{metrics}

We begin with the following lemma that we will use throughout this
section.    
\begin{lemma}
\label{all pullbacks are the same}
Let $\bfw: X \lra
\bP^1$ be a morphism given by $\bfw:=\frac{\bfu}{\bfv}$ where $\bfu,\bfv\in A$
such that $(\bfu,\bfv)=A$. Then the line bundle $\bfw^*\cO_{\bP^1}(1)$ is linearly
equivalent to a multiple of $\eta$. Furthermore, if
$\deg(\bfu)>\deg(\bfv)$, then $\bfw^*\cO_{\bP^1}(1)$ is linearly
equivalent to $\deg (\bfu) \eta$.
\end{lemma}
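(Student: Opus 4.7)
The plan is to identify $\bfw^{*}\cO_{\bP^1}(1)$ with the divisor class of the fiber $\bfw^{-1}(\infty)$, then exhibit an explicit rational function on $X$ which realizes the difference between $\bfw^{-1}(\infty)$ and a suitable multiple of $\eta$ as a principal divisor. First I would fix homogeneous coordinates $[X_0\colon X_1]$ on $\bP^1$ with $\infty=[1\colon 0]$, so that the divisor of the section $X_1$ of $\cO_{\bP^1}(1)$ is $[\infty]$; pulling back, $\bfw^{*}\cO_{\bP^1}(1)\cong \cO_X(\bfw^{-1}(\infty))$.

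Next I would compute $\bfw^{-1}(\infty)$ explicitly by separating the contributions from $Y$ and from $\eta$. Write $n_1:=\deg(\bfu)$ and $n_2:=\deg(\bfv)$. The function $\bfv$ is regular on $Y$ and has a pole only at $\eta$, of order $n_2$, so $\Div(\bfv)=Z(\bfv)|_Y-n_2\,\eta$, where $Z(\bfv)|_Y$ is the zero divisor of $\bfv$ on $Y$. The coprimality condition $(\bfu,\bfv)=\bfA$ guarantees that at any $Q\in Y$ with $\bfv(Q)=0$ one has $\bfu(Q)\neq 0$, so such a $Q$ contributes to $\bfw^{-1}(\infty)$ with multiplicity $\ord_Q(\bfv)$. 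At the point $\eta$ we have $\ord_\eta(\bfw)=\ord_\eta(\bfu)-\ord_\eta(\bfv)=n_2-n_1$, so $\eta$ contributes $\max(n_1-n_2,0)$ to $\bfw^{-1}(\infty)$. Combining these yields $\bfw^{-1}(\infty)=Z(\bfv)|_Y+\max(n_1-n_2,0)\,\eta$.

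Finally, setting $N:=\max(n_1,n_2)$, I would verify that $\bfw^{-1}(\infty)-N\eta=\Div(\bfv)$, which is principal. Indeed, when $n_1\ge n_2$ the difference reads $Z(\bfv)|_Y+(n_1-n_2)\eta-n_1\eta=Z(\bfv)|_Y-n_2\eta=\Div(\bfv)$, and when $n_1<n_2$ it reads $Z(\bfv)|_Y-n_2\eta=\Div(\bfv)$. In either case $\bfw^{*}\cO_{\bP^1}(1)\sim N\cdot\eta$, a multiple of $\eta$; and when $n_1>n_2$ we have $N=\deg(\bfu)$, giving the second assertion. The only mildly delicate point is using the coprimality condition to pin down the multiplicity of points of $Y$ in $\bfw^{-1}(\infty)$; after that, the argument is routine divisor bookkeeping on the smooth projective curve $X$, so no serious obstacle is expected.
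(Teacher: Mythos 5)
Your proof is correct and follows essentially the same route as the paper's: both identify $\bfw^{*}\cO_{\bP^1}(1)$ with the pullback divisor of $\infty$, split it into the zeros of $\bfv$ on $Y$ plus a contribution at $\eta$, and then use the principal divisor $\Div(\bfv)$ (equivalently, that $\bfv$ is itself a map to $\bP^1$) to move the $Y$-part to a multiple of $\eta$. Your version is slightly more careful in treating the case $\deg(\bfu)\le\deg(\bfv)$ explicitly via $N=\max(\deg\bfu,\deg\bfv)$, but the underlying argument is the same.
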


\begin{proof}
We have that $\bfw^*\cO_{\bP^1}(1)$ equals
$$d_\bfw \eta + \sum_i n_i P_i,$$
where $(P_i,n_i)$ are the zeros $P_i$ with corresponding
multiplicities $n_i$ of $\bfv$. Note that $d_\bfw = (\deg(\bfu) - \deg(\bfv))>0$ if and only if the order of the pole of $\bfu$ at
$\eta$ is larger than the order of the pole of $\bfv$ at $\eta$. On the other hand, $\bfv$ is itself a map from $X$ to $\bP^1$, so
$\sum_i n_i P_i$ is linearly equivalent to $\deg (\bfv) \eta$.  Thus,
$\bfw^*\cO_{\bP^1}(1)$ is linearly equivalent with
$(d_\bfw+ \deg(\bfv )) \eta = \deg (\bfu) \eta$, as desired. 
\end{proof}

Now, let $v\in \Omega_K$ be any
place of $K$.  We put a family of metrics $\| \cdot \|_{v,n}$ on
$\bfc^*\cO_{\bP^1}(1)$ for every positive integer $n$ as 
follows.  Since $\bfc^*\cO_{\bP^1}(1)$ is generated by pull-backs of global
sections of $\cO_{\bP^1}(1)$, it suffices to describe the metric for sections of
the form $z = \bfc^*(u_0t_0 + u_1 t_1)$ where $t_0$ and $t_1$ are the usual coordinate
functions on $\bP^1$ and $u_0,u_1$ are scalars.  For a point $\l \in Y(\bC_v)$, we then define for each $n\in\N$
\begin{equation}
\label{def metric 1}
\|z\|_{v,n}(\l)  := \frac{|u_0 \bfa(\l) + u_1 \bfb(\l)|_v}{\left\{\max \left( |A_{\bfc,n}(\l)|_v, |B_{\bfc,n}(\l)|_v \right)\right\}^{1/d^n}} \text{ if $v$ is nonarchimedean}
\end{equation}
and
\begin{equation}
\label{def metric 2}
\|z\|_{v,n}(\l)  := \frac{|u_0 \bfa(\l) + u_1 \bfb(\l)|_v}{
  \left(|A_{\bfc,n}(\l)|^2_v+ |B_{\bfc,n}(\l)|^2_v \right)^{1/(2
    d^n)} } \text{ if $v$ is archimedean.}
\end{equation}
(Recall that $\deg(A_{\bfc,n})=d_\bfa d^n >\deg(B_{\bfc,n})$.)
Furthermore, we define 
$$\|z\|_{v,n}(\eta)= \vlim_{\l\to\eta} \|z\|_{v,n}(\l) = 
\frac{|u_0|_v}{\left|c_{P,0}^{(d^n-1)/(d^{n+1}-d^n)}\right|_v}.$$
(Note that the leading coefficient of $A_{\bfc,n}$ is $c_{P,0}^{(d^n-1)/(d-1)}c_\bfa^{d^n}$ according to Proposition~\ref{degrees A and B 2}.)

One arrives at \eqref{def metric 1} and \eqref{def metric 2} as
follows.  Let $\Phi_{\bfc,n}: X \lra \bP^1$ be defined by
$\Phi_{\bfc,n }(\lambda) = [A_{\bfc,n}(\lambda): B_{\bfc,n}(\lambda)]$
for $\lambda \not= \eta$ and $\Phi_{\bfc,n }(\eta) = \infty$.  Then, since $A_{\bfc,n}$ has a higher order
pole at $\eta$ than $B_{\bfc,n}$, we see that $\Phi_{\bfc, n}$ sends
$\eta$ to $[1:0]$.  By Lemma \ref{all pullbacks are the same}, we see
then that $(\bfc^* \cO_{\bP^1}(1))^{d^n}$ is isomorphic to
$\Phi_{\bfc,n}^*\cO_{\bP^1}(1)$.  Thus, $\bfc^* t_0^{\otimes d^n}$ and
$\bfc^* t_1^{\otimes d^n}$ are both sections of
$\Phi_{\bfc,n}^*\cO_{\bP^1}(1)$.  Note that $\bfc^* t_0^{\otimes d^n}$
and $\bfc^* t_1^{\otimes d^n}$ have no common zero since $t_0$ and
$t_1$ have no common zero; hence they generate
$\Phi_{\bfc,n}^*\cO_{\bP^1}(1)$ as a line bundle.  Likewise,
$\Phi_{\bfc,n }^*t_0$ and $\Phi_{\bfc,n }^*t_1$ have no common zero
and thus generate $\Phi_{\bfc,n}^*\cO_{\bP^1}(1)$ as a line bundle.
Thus (by \cite[Section II.6]{H}, for example), we have an isomorphism $\tau: (\bfc^* \cO_{\bP^1}(1))^{\otimes
  d^n} {\tilde \lra} \Phi_{\bfc,n}^*\cO_{\bP^1}(1)$, given by $\tau:
\bfc^* t_0^{\otimes d^n} \mapsto \Phi_{\bfc,n }^*t_0$ and $\tau:
\bfc^* t_1^{\otimes d^n} \mapsto \Phi_{\bfc,n }^*t_1$.  Now, for each
place $v$, let $\| \cdot \|'_v$ be the metric on $\cO_{\bP^1}(1)$
given by
\[ \| (u_0 t_0 + u_1 t_1)  \|'_v([a:b]) = \frac{ | u_0 a + u_1 b|_v}
{\max( |a|_v, |b|_v )} \; \text{ if $v$ is nonarchimedean}\] 
and 
\[ \| (u_0 t_0 + u_1 t_1)\|'_v( [a:b]) = \frac{ | u_0 a + u_1 b|_v}
{\sqrt[2]{|a|_v^2 + |b|_v^2}} \text{ if $v$ is archimedean}\] 
(this is the Fubini-Study metric).  Then $\|
\cdot \|_{v,n}$ is simply the $d^n$-th root of $\tau^* \Phi_{\bfc,
  n}^* \| \cdot \|'_v$.  In particular, the adelic metrized line bundle
${\overline L}_n$ given by $\bfc^*\cO_{\bP^1}(1)$ with the metrics $\| \cdot
\|_{v,n}$ is isomorphic to a power of the pullback of a semipositive metrized
line bundle, so it is therefore itself semipositive (see
\cite[Section 2]{zhangadelic}).

\begin{remark}
We also note that for any given model  $\cX$ for $X$ over the ring of
integers $\fo_K$ of $K$, there exists a finite subset $S$ of places of
$K$ depending on $\cX, \bfc$ and $\bff$ such that $\Phi_{\bfc,n}$
extends to a morphism from $\cX$ to $\bP^1$ over the ring of $S$-integers $\fo_S$ of $K$ for all $n.$ From
this we conclude that the family of metrics $\|\cdot\|_{v,n}$ are the same
for all $n$ and all $v\not\in S.$ More precisely, 
we note that $\Phi_{\bfc,n}$ has good reduction for all nonarchimedean primes which do not divide the (constant) resultant of the family $\bff_\l$ and also do not divide the leading coefficients of both $P$ and of $\bfa$. In particular this proves that for each such place $v$ of good reduction, for each $\l\in Y$ and for each integer $n$, we have
$$\max\{|A_{c,n}(\l)|_v, |B_{c,n}(\l)|_v\}=\max\{|\bfa(\l)|_v, |\bfb(\l)|_v\}^{d^n}.$$
Indeed, if $|\bfa(\l)|_v\le |\bfb(\l)|_v$, then dividing  equations \eqref{recursiveA} and \eqref{recursiveB} by $|\bfb(\l)|_v^{d^n}$ and using the fact that $\bfc(\l)$ is integral at $v$ while $\Phi_{\bfc,n}$ has good reduction at $v$ we conclude that
$$\max\left\{\frac{|A_{\bfc,n}(\l)|_v}{|\bfb(\l)|_v^{d^n}}, \frac{|B_{\bfc,n}(\l)|_v}{|\bfb(\l)|_v^{d^n}}\right\}=1.$$
Similarly, if $|\bfa(\l)|_v>|\bfb(\l)|_v$, then $|\bfc(\l)|_v>1$ and since $v$ is a place of good reduction for $\Phi_{\bfc,n}$ we obtain that $|A_{\bfc,n}(\l)|_v=|\bfa(\l)|_v^{d^n}> |B_{\bfc,n}(\l)|_v$. In conclusion, 
$$\max\{|A_{c,n}(\l)|_v, |B_{c,n}(\l)|_v\}=\max\{|\bfa(\l)|_v, |\bfb(\l)|_v\}^{d^n},$$
as claimed.  Now, let $S$ be set of archimedean places along
with the nonarchimedean places $v$ which divide the leading coefficient
of $\bfa$ or $P$ or
divide the constant resultant of the family $\bff_\l$.  Then
\begin{equation*}
\|\cdot \|_{v,n} = \| \cdot \|_{v,0} \text{ for all $v \notin S$ and
  all positive integers $n$} 
\end{equation*}
\end{remark}

\begin{prop}
  \label{finally uniformity}
  For any $v\in \Omega_K$ the sequence of metrics $\| \cdot \|_{v,n}$
  defined above converges uniformly on $X(\C_v)$. 
\end{prop}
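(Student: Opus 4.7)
The plan is to show that the sequence of continuous metrics $\|\cdot\|_{v,n}$ on $\bfc^*\cO_{\bP^1}(1)$ is uniformly Cauchy on $X(\C_v)$. The essential step is to combine Proposition~\ref{lem:finite uniform} and Proposition~\ref{6 uniform large l} into a single multiplicative estimate on $M_n$ valid on all of $Y(\C_v)$, and then to extend the resulting Cauchy bound across the single point $\eta$ by continuity.

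First I would fix $L = L_4$, where $L_4$ is the constant produced by Proposition~\ref{6 uniform large l}. Since $U_{L_4}$ and $V_{L_4}$ jointly cover $X(\C_v)$, the two propositions combine to furnish constants $D_1, D_2 > 0$ (independent of both $n$ and $\lambda$) such that
\begin{equation}\label{mult-bound}
D_1 \;\le\; \frac{M_{n+1}(\lambda)}{M_n(\lambda)^{d}} \;\le\; D_2
\end{equation}
for every $\lambda\in Y(\C_v)$ and every $n\ge 0$. In the archimedean case the same estimate holds after replacing $M_n$ by $M'_n := (|A_{\bfc,n}|_v^2 + |B_{\bfc,n}|_v^2)^{1/2}$, since $M_n \le M'_n \le \sqrt{2}\,M_n$.

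Writing $s := u_0\bfa + u_1\bfb$ for the pullback section, the definitions \eqref{def metric 1} and \eqref{def metric 2} give immediately, at every $\lambda\in Y(\C_v)$ with $s(\lambda)\ne 0$, the identity
$$ \log\|s\|_{v,n+1}(\lambda) - \log\|s\|_{v,n}(\lambda) \;=\; \frac{1}{d^{n+1}}\log\!\frac{M_n(\lambda)^{d}}{M_{n+1}(\lambda)}, $$
whose absolute value is, by \eqref{mult-bound}, at most $(|\log D_1|+|\log D_2|)/d^{n+1}$. Telescoping and exponentiating then shows that $\{\|s\|_{v,n}\}$ is uniformly Cauchy on $Y(\C_v)\setminus\{s=0\}$; at zeros of $s$ every $\|s\|_{v,n}$ vanishes, so convergence is trivial there. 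Thus $\|\cdot\|_{v,n}$ converges uniformly on $Y(\C_v)$.

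To finish, each $\|s\|_{v,n}$ extends continuously to $X(\C_v)$ via the prescribed value at $\eta$, and $Y(\C_v)$ is dense in $X(\C_v)$ in the $v$-adic topology. Any uniform Cauchy inequality valid throughout $Y(\C_v)$ therefore passes to $\eta$ by taking limits, giving uniform convergence on all of $X(\C_v)$. As an independent consistency check, the explicit formula for $\|z\|_{v,n}(\eta)$ shows that the difference between consecutive values at $\eta$ is $|\log|c_{P,0}|_v|/d^{n+1}$, which matches the bound \eqref{mult-bound}. The genuine analytic input is the estimate \eqref{mult-bound} itself, and the main subtlety—already isolated in the proof of Proposition~\ref{6 uniform large l}—is that this multiplicative estimate must not deteriorate as $\lambda\to\eta$ even though the individual values $M_n(\lambda)$ blow up.
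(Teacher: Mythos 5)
Your proof is correct and follows essentially the same route as the paper's. You combine Propositions~\ref{lem:finite uniform} and \ref{6 uniform large l} into the single multiplicative estimate \eqref{mult-bound}, reduce to showing $\frac{1}{d^n}\log M_n(\lambda)$ is uniformly Cauchy via telescoping, and then note that the estimate extends to $\eta$ by continuity. The paper does exactly this (it phrases the reduction as ``it suffices to show that $\frac{\log h_{v,n}(\lambda)}{d^n}$ converges uniformly'' and dispatches $\eta$ at the outset by noting direct convergence there); your explicit continuity argument for $\eta$ and your consistency check against the prescribed value at $\eta$ are small additions but not structurally different. One small point of care: the relevant notion (as used in the definition of semipositivity in Section~\ref{subsec:metrized line bundle}) is uniform convergence of $\log\|\cdot\|_{v,n}$, which is exactly what your telescoped bound delivers; the subsequent ``exponentiating'' step is unnecessary and could be misread as claiming a uniform Cauchy bound for the metrics themselves, which would require an a priori uniform bound on $\|s\|_{v,n}$ that you haven't established and don't need.
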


\begin{proof}
If $\l=\eta$, the convergence is clear. 
For each $\l\in Y(\C_v)$ we denote by 
$$h_{v,n}(\l):=\max\{|A_{\bfc,n}(\l)|_v,|B_{\bfc,n}|_v\}\text{ if $v$ is nonarchimedean, and}$$
$$h_{v,n}(\l):=\sqrt{|A_{\bfc,n}(\l)|_v^2+|B_{\bfc,n}(\l)|_v^2} \text{ if $v$
  is archimedean.}$$
Then, for a section of the form $z = \bfc^{\ast}(u_0 t_0 + u_1 t_1)$ we
 have 
$$\|z\|_{v,n}(\l) = \frac{|u_0 \bfa(\l) + u_1 \bfb(\l)|_v}{h_{v,n}(\l)^{\frac{1}{d^n}}}.$$ 
To show that  $\log\|\cdot\|_{v,n}$
converge uniformly it suffice to show that  $\frac{\log h_{v,n}(\l)}{d^n}$ converge uniformly for $\l\in Y(\C_v)$. 

Propositions~\ref{lem:finite uniform} and \ref{6 uniform large l} show that there exist positive real numbers $C_{9}$ and $C_{10}$ such that for all $n\ge 0$, we have
\begin{equation}
\label{iterated growth}
C_{9} h_{v,n}(\l)^{d}\le h_{v,n+1}(\l)\le C_{10}h_{v,n}(\l)^{d}.
\end{equation}
In establishing inequality \eqref{iterated growth} at archimedean places $v$, we used the fact that
\begin{equation}
\label{2 proportionality}
\max\{|A_{\bfc,n}(\l)|_v,|B_{\bfc,n}(\l)|_v\}\le h_{v,n}(\l)\le \sqrt{2}\cdot \max\{|A_{\bfc,n}(\l)|_v,|B_{\bfc,n}(\l)|_v\} 
\end{equation}
for all $n\in\N$.   
Taking logarithms in
\eqref{iterated growth} and
dividing by $d^{n+1}$  yields
\[ 
\left|  \frac{1}{d^{n+1}} \log h_{v,n+1}(\l) - \frac{1}{d_1^n} \log
  h_{v,n}(\l) \right| \leq C_{11}/d^{n+1} 
\]
for some positive constant $C_{11}$. 
Thus, by the usual telescoping series argument, we have
\[ \left|  \frac{1}{d^{m}} \log h_{v,m}(\l) - \frac{1}{d^n} \log
h_{v,n}(\l)  \right|  \leq \frac{C_{11}}{d^n} \sum_{i=0}^\infty (1/d^i) =
\frac{C_{11}}{d^n (1-1/d)}  \] 
for any $m > n$.  Since $\frac{C_{11}}{d^n (1-1/d)}$ can be made
arbitrarily small by choosing $n$ large, this gives uniform
convergence for all $\lambda\in Y(C_v)$.  
\end{proof}

For each $v\in \Omega_K$, we let $\|\cdot\|_v$ denote the limit of the
family of metrics $\|\cdot\|_{v,n}$. Proposition~\ref{finally
  uniformity} shows that the adelic metrized 
line bundle ${\overline L} = \left(\bfc^{*}\cO_{\bP^1}(1), \{\|\cdot\|_v\}_{v\in\Omega_K}
\right)$ is semi-positive. Let $\l\in X(\Kbar)$ and choose a
meromorphic section $s$ of $L$ whose support is disjoint from the
Galois conjugates $\l^{[1]}, \ldots, \l^{[\ell]}$ of $\l$ over $K.$
Furthermore, Lemma~\ref{all pullbacks are the same} says that the line
bundle $\bfc^{*}\cO_{\bP^1}(1)$ is isomorphic to $L_\eta^{\otimes d_\bfa}$
where $L_\eta$ is the line bundle determined by the divisor class
containing $\eta$. As in
Section~\ref{subsec:metrized line bundle} we put  
\begin{equation}
  \label{height of bfc}
  h_{\bfc}(\l) :=  \frac{1}{d_\bfa} \sum_{v\in\Omega_K}
  \frac{N_v}{\ell} \sum_{i=1}^\ell   - \log \| s(\l^{[i]}) \|_v . 
\end{equation}
Consequently,  $h_{\bfc} = h_{\overline{L_\eta}}$ is a height function associated to the
metrized line bundle $\overline{L_\eta}$ that corresponds to the
divisor class containing $\eta.$   Now we are ready to give a proof of
Theorem~\ref{thm:uniform convergence}.

\begin{proof}[Proof of Theorem~\ref{thm:uniform convergence}]
Recall that we are given $\bff(x) = P(x)/Q(x)$ of degree $d \ge 2$
over $F$ and a point $\bfc\in F$ such that the sequence $\{\deg
\bff^n(\bfc)\}_{n\ge 0}$ is unbounded. Note that $\bff$ satisfies the
conditions that the resultant $R(\bff)\in K^{*}$ and $d_P \ge d_Q + 2$
(equivalently, the point $x = \infty$ is a superattracting fixed
point for $\bff$).

The first step is to compute the canonical height $\hhat_{\bff}(\bfc)$
of $\bfc \in \bP^1(F)$ associated to the given morphism $\bff$ over $F =
K(X).$ We note that $F$ is a product formula field and moreover the
set of places $\Omega_F$ is in one to one correspondence with the set
of closed points of $X$ over $K$.  Let $e_n = \max\{\deg A_{\bfc,n},
\deg B_{\bfc,n}\}.$ Then,
\begin{align*}
  \hhat_{\bff}(\bfc) & = \lim_{n\to \infty} \frac{1}{d^n} \sum_{P\in
  X} \deg(P) \max \{ - \ord_P(A_{\bfc,n}), - \ord_P(B_{\bfc,n})\} \\
  &  = \lim_{n\to \infty}  \frac{1}{d^n}
  \max\{\deg(A_{\bfc,n}),
  \deg(B_{\bfc, n})\} \quad \text{since $(A_{\bfc,n}, B_{\bfc,n}) = \bfA$}\\
  & = \lim_{n\to \infty} \frac{e_n}{d^n}
\end{align*}
where in the sum $P$ runs over all closed point of $X$ and $\deg(P)$
denote the degree of $P$ over $K.$
Now we put  $g(\l) :=  \hhat_{\bff_\l}(\bfc(\l))/\hhat_{\bff}(\bfc)$
which gives a function on $Y(\Kbar)$. We claim that $g(\l) =
h_\bfc(\l)$ for all $\l.$ Since $h_\bfc$ is a height function
associated to the divisor class containing $\eta$,
Theorem~\ref{thm:uniform convergence} will follow from the claim.

To prove the claim, we first observe that $g(\l)$ is independent of
the choice of the point in the orbit $\cO_\bff(\bfc)=
\{\bff^n(\bfc)\}_{n\ge 0}$. This can be seen as follows: for each
$n \ge 0$ 
$$
\frac{\hhat_{\bff_\l}(\bff_\l^n(\bfc(\l)))}{\hhat_{\bff}(\bff^n(\bfc))}
= \frac{d^n \hhat_{\bff_\l}(\bfc(\l))}{d^n
  \hhat_{\bff}(\bfc)} = g(\l). 
$$ 
In the following, we choose $n$ large enough so that
$\deg(\bff^n(\bfc)) > m$ where $m = m_1 + m_2$ as defined in
\eqref{definition of the m's}. This is possible as
$\deg(\bff^n(\bfc))\to \infty$ when $n\to \infty.$ Replacing
$\bfc$ by $\bff^n(\bfc)$ if necessary, we may assume that $\bfc$
satisfies $d_\bfc = \deg(\bfc) > m.$ Then, according to
Proposition~\ref{degrees A and B   2} we have $e_n = \max\{\deg
A_{\bfc,n}, \deg B_{\bfc,n}\} = d_\bfa d^n.$ Hence,
$\hhat_{\bff}(\bfc) = e_n/d^n = d_\bfa. $ Let $\l^{[1]}, \ldots,
\l^{[\ell]}$ be the Galois conjugates of $\l$ over $K$ and let $s$ be
a section of $\bfc^{\ast} \cO_{\bP^1}(1)$ whose support is disjoint
from  $\l^{[1]}, \ldots, \l^{[\ell]}$. Now we compute
\begin{align*}
d_\bfa h_\bfc(\l) & = \sum_{v\in\Omega_K} \frac{N_v}{\ell} \sum_{i=1}^\ell
- \log \| s(\l^{[i]}) \|_v 
\\
& = \sum_{v\in\Omega_K} \frac{N_v}{\ell} \sum_{i=1}^\ell
\lim_{n\to\infty} \log 
\max\{|A_{\bfc,n}(\l^{[i]})|_v, |B_{\bfc,n}(\l^{[i]})|_v\} -
\log|s(\l^{[i]})|_v \\
& = \sum_{v\in\Omega_K} \frac{N_v}{\ell} \sum_{i=1}^\ell
\lim_{n\to\infty} \log 
\max\{|A_{\bfc,n}(\l^{[i]})|_v, |B_{\bfc,n}(\l^{[i]})|_v\}
\quad\text{by the product formula},
\\
& =
\hhat_{\bff_\l}(\bfc(\l))\quad\text{see~\cite[Theorem~5.59]{Silverman07}}
\\ 
& = \hhat_{\bff}(\bfc) g(\l) \quad \text{by the definition of $g(\l).$}
\end{align*}
As remarked above, we have $\hhat_{\bff}(\bfc) = d_\bfa$. It follows
that $g(\l) = h_\bfc(\l)$ and the proof of Theorem~\ref{thm:uniform
  convergence} is completed. 
\end{proof}



\section{Preperiodic points for families of dynamical systems}
\label{conclusion}

We are ready to prove our main results.
\begin{proof}[Proof of Theorem~\ref{general main}.]
We let   $K$ be a number field such that
$\bff_i$ and also $\bfc_i$ are defined over $K$ (for $i=1,2$). Let
$ h_{\bfc_i}(\l) := \hhat_{\bff_\l}(\bfc_i)/\hhat_{\bff}(\bfc)$ be the
height function defined as in Section~\ref{metrics} for $i=1,2$. As in
the proof of Theorem~\ref{thm:uniform convergence}, $h_{\bfc_i} =
h_{{\overline L}_{\eta,i}}$ is the height function associated to the adelic
metrized line bundle ${\overline L}_{\eta,i} = (L_\eta,
\{\|\cdot\|_{v,i}\}_{v\in \Omega_K})$ where for any $v\in \Omega_K$,
the metric $\{\|\cdot\|_{v,i}\}$ denotes the limits of the metrics
constructed in \eqref{def metric 1} and \eqref{def metric 2}
corresponding to $\bfc_1$ and respectively $\bfc_2$. 

Our hypothesis and Proposition~\ref{finally uniformity} allow us to
use Corollary~\ref{to-use} and therefore conclude the equality of the
two metrics.  That is, 
$$\frac{\hhat_{\bff_{\l,1}}(\bfc_1(\l))}{\hhat_{\bff}(\bfc_1)}=
\frac{\hhat_{\bff_{\l,2}}(\bfc_1(\l))}{\hhat_{\bff}(\bfc_2)}.$$
Therefore,  we have 
$$ \hhat_{\bff_{\l,1}}(\bfc_1(\l)) = 0 \quad \text{if and only
  if}\quad \hhat_{\bff_{\l,2}}(\bfc_1(\l))=0. 
$$ 
This concludes the proof of Theorem~\ref{general main}.
\end{proof}

The following results are easy consequences of Theorem~\ref{general main}.
\begin{cor}
\label{cor 1}
Let $\bfc_i$ and $f_{\l, i}$ be as in Theorem~\ref{general main} for
$i = 1, 2$. Then for each $\l\in\Qbar$,
$\bfc_1(\l)$ is preperiodic for $f_{\l,1}$ if and only if $\bfc_2(\l)$
is preperiodic for $f_{\l,2}$.
\end{cor}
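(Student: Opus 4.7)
The plan is to invoke Theorem~\ref{general main} directly and then translate its conclusion, which is phrased in terms of canonical heights, into a statement about preperiodicity. By the hypothesis of the corollary, the pairs $(\bff_{\l,i},\bfc_i)$ satisfy all the conditions in the statement of Theorem~\ref{general main}, so the theorem applies verbatim: for every $\l\in Y(\Kbar)$ we have
\[
\hhat_{\bff_{\l,1}}\bigl(\bfc_1(\l)\bigr)=0 \quad\Longleftrightarrow\quad \hhat_{\bff_{\l,2}}\bigl(\bfc_2(\l)\bigr)=0.
\]

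To finish, I would invoke the standard fact (a consequence of Northcott's theorem applied to the canonical height $\hhat_f$ constructed by Call--Silverman~\cite{Call-Silverman}) that whenever $f$ is a rational map of degree at least $2$ defined over a number field and $x\in\Qbar$ lies in the field of definition's algebraic closure, one has $\hhat_f(x)=0$ if and only if $x$ is preperiodic under $f$. This is precisely the remark recorded in item~(2) of the remarks following Theorem~\ref{general main}. Since each $\bff_{\l,i}$ is a rational map defined over $K(\l)$ (a number field, as $\l\in\Qbar$) of degree $d\ge 2$, and since $\bfc_i(\l)\in\Qbar$, this translation applies to both sides of the equivalence above and yields the desired statement.

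The only minor point worth flagging is the edge case $\l=\eta$, which is excluded by the setup since we work over $Y=X\setminus\{\eta\}$ (where $\bff_\l$ need not specialize to a well-defined rational map of degree $d$); as in the statement of Theorem~\ref{general main}, the claim is to be understood for $\l\in Y(\Qbar)$. There is no substantive obstacle: Corollary~\ref{cor 1} is essentially a reformulation of Theorem~\ref{general main} obtained by applying the Northcott-type characterization of preperiodic points in the number field setting.
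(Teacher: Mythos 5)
Your proposal is correct and takes essentially the same approach as the paper: apply Theorem~\ref{general main} and then use the standard equivalence (from Northcott's theorem and the Call--Silverman construction) that $\hhat_{f}(x)=0$ if and only if $x$ is preperiodic, valid since each $\bff_{\l,i}$ is a degree~$\ge 2$ rational map over a number field. Your remark about the excluded point $\eta$ is a reasonable clarification that the paper leaves implicit.
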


\begin{proof}
Since $\hhat_{f_{\l,i}}(x)=0$  if and only if $x$ is preperiodic for $f_{\l,i}$  (because $f_{\l,i}\in\Qbar(x)$), the conclusion is immediate.
\end{proof}

\begin{proof}[Proof of Theorem~\ref{second main}.]
We let $\bfc_1=\frac{a}{1}$ and $\bfc_2=\frac{b}{1}$; by our
assumption, $\bfc_i$ is a quotient of two functions in $\bfA$ (which generate $\bfA$), and also $\deg(\bff^n(\bfc_i))$ is unbounded as $n\to\infty$. Since
$P_i,Q_i\in\Qbar[x]$ and also $a,b\in\Qbar$, Corollary~\ref{all in
  Kbar} yields that if $a$ (or $b$) is preperiodic under $\bff_{\l}$
(or $\bfg_\l$), then $\l\in\Qbar$. Note that $\bff_{\l}$,
$\bfg_\l$, $\bfc_1(\l)$ and $\bfc_2(\l)$ satisfy the hypothesis of
Theorem~\ref{general main}.  
Using Corollary~\ref{cor 1} we obtain that $a$ is preperiodic under the action of $\bff_{\l}$ if and only if $b$ is preperiodic under the action of $\bfg_{\l}$. 
\end{proof}

The following Corollary generalizes Theorem~\ref{second main} and its proof is identical with the proof of Theorem~\ref{second main}.

\begin{cor}
\label{cor 3}
Let $P_i,Q_i,R_i\in\Qbar[x]$ be nonzero polynomials such that $\deg(P_i)\ge \deg(Q_i)+\deg(R_i)+2$, and let $a,b\in\Qbar$ such that $Q_1(a)$, $R_1(a)$, $Q_2(b)$ and $R_2(b)$ are all nonzero. Let $C$ be a projective nonsingular curve defined over $\Qbar$, let $\eta\in C(\Qbar)$ and let $\bfA$ be the ring of functions on $C$ regular on $C\setminus\{\eta\}$. Let $\Phi,\Psi\in \bfA$ be nonconstant functions. If there exist infinitely many $\l\in C(\Qbar)$ such that both $a$ and $b$ are preperiodic under the action of $\bff_{\l}(x)=P_1(x)/Q_1(x)+\Phi(\l) \cdot R_1(x)$ and respectively of $\bfg_\l(x)=P_2(x)/Q_2(x)+\Psi(\l)\cdot R_2(x)$, then  for all $\l\in C(\C)$, $a$ is preperiodic for $\bff_{\l}$ if and only if $b$ is preperiodic for $\bfg_{\l}$.
\end{cor}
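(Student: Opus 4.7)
The plan is to verify the hypotheses of Theorem~\ref{general main}, at which point Corollary~\ref{cor 1} yields the desired equivalence of preperiodicity; the argument is formally parallel to the proof of Theorem~\ref{second main}.

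First I would rewrite the two families so that their coefficients live in $\bfA$:
\[
\bff_\l = \frac{\tilde{P}_1(x)}{Q_1(x)},\qquad \tilde{P}_1(x) := P_1(x) + \Phi\cdot R_1(x)Q_1(x) \in \bfA[x],
\]
and analogously $\bfg_\l = \tilde{P}_2/Q_2$ with $\tilde{P}_2 := P_2 + \Psi R_2 Q_2$. The strict inequality $\deg P_i \ge \deg Q_i + \deg R_i + 2$ forces $\deg_x \tilde{P}_i = \deg P_i$, so the leading coefficient of $\tilde{P}_i$ in $x$ equals that of $P_i$ (a nonzero constant in $\Qbar$) and $\deg_x \tilde{P}_i \ge \deg_x Q_i + 2$; consequently $\infty$ is a superattracting fixed point of both families, so condition~(2) of Theorem~\ref{general main} holds. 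After cancelling any common factor of $P_i$ and $Q_i$ in $\Qbar[x]$ (which alters neither the map $\bff_\l$ nor the hypotheses $Q_i(a), Q_i(b) \neq 0$), the congruence $\tilde{P}_i \equiv P_i \pmod{Q_i}$ gives $\Res(\tilde{P}_i, Q_i) = \pm\,\Res(P_i, Q_i) \in \Qbar^{*}$, verifying condition~(1).

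Next I would set $\bfc_1 := a$ and $\bfc_2 := b$, viewed as constants in $\bfA$; then~\eqref{relative prime again} is trivial with $\bfb_i = 1$. The assumption $R_1(a) \neq 0$ together with the nonconstancy of $\Phi$ forces $\bff(a) = P_1(a)/Q_1(a) + \Phi\,R_1(a) \in \bfA$ to have positive $\eta$-pole degree $\deg \Phi \geq 1$; iterating and invoking Proposition~\ref{degrees A and B 2} once the degree surpasses the threshold $m = m_1 + m_2$ from~\eqref{definition of the m's} shows $\deg \bff^n(\bfc_1)$ grows like $s^n$ and is therefore unbounded. The same reasoning applies to $b$ and $\bfg$, using $R_2(b) \neq 0$ and the nonconstancy of $\Psi$.

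Having verified all hypotheses, the proof concludes as in Theorem~\ref{second main}. By Corollary~\ref{all in Kbar}, any $\l \in C(\C)$ at which $a$ is preperiodic under $\bff_\l$, or $b$ under $\bfg_\l$, must lie in $C(\Qbar)$, so points of $C(\C) \setminus C(\Qbar)$ satisfy the biconditional trivially. The infinite sequence furnished by the hypothesis consists of $\l_n \in C(\Qbar)$ with $\hhat_{\bff_{\l_n}}(a) = \hhat_{\bfg_{\l_n}}(b) = 0$; applying Theorem~\ref{general main} gives $\hhat_{\bff_\l}(a) = 0 \Leftrightarrow \hhat_{\bfg_\l}(b) = 0$ for every $\l \in C(\Qbar)$, which by Corollary~\ref{cor 1} is precisely the claimed preperiodic equivalence. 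The main bookkeeping obstacle is the resultant and degree analysis for $\tilde{P}_i$; once these are settled, the argument reduces to a direct invocation of Theorem~\ref{general main}.
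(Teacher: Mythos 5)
Your proposal is correct and takes the same route as the paper, which proves Corollary~\ref{cor 3} by simply stating that its proof is ``identical with the proof of Theorem~\ref{second main}'': rewrite the families as $\tilde{P}_i/Q_i$ with coefficients in $\bfA$, verify the resultant/leading-coefficient/degree-growth hypotheses of Theorem~\ref{general main}, invoke Corollary~\ref{all in Kbar} to reduce from $C(\C)$ to $C(\Qbar)$, and conclude via Corollary~\ref{cor 1}. You spell out the bookkeeping (in particular the resultant invariance $\Res(\tilde{P}_i,Q_i)=\Res(P_i,Q_i)$ and that $\deg\bff(a)=\deg\Phi$ exceeds the threshold $m$, which uses the ``$+2$'' in the degree hypothesis) more explicitly than the paper does, but the argument is the same.
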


\begin{proof}[Proof of Corollary~\ref{cor 2}.]
By our assumption on the degrees of $P_i$, $Q_i$, $R_i$, $g_i$ for
$i=1,2$ we conclude that the conditions in Theorem~\ref{general main}
is satisfied for $f_{\l,i}(x):=P_i(x)/Q_i(x)+\l R_i(x)$, and $\bfc_i(\l)=g_i(\l)+c_i$ for $i=1,2$. Assume there exist infinitely many $\l\in\Qbar$ such that  $g_i(\l)+c_i$  is preperiodic under the action of $f_{\l,i}(x)$ for $i=1,2$. Using Corollary~\ref{cor 1} we obtain that $g_1(\l)+c_1$ is preperiodic under the action of $f_{\l,1}$ if and only if $g_2(\l)+c_2$ is preperiodic under the action of $f_{\l,2}$. However, $c_1=g_1(0)+c_1$ is preperiodic under $f_{0,1}(x)=P_1(x)/Q_1(x)$, while $c_2=g_2(0)+c_2$ is not preperiodic under $f_{0,2}(x)=P_2(x)/Q_2(x)$. This contradiction proves that indeed there exist at most finitely many $\l\in\Qbar$ such that $g_i(\l)+c_i$ is preperiodic under the action of $f_{\l,i}(x)$ for $i=1,2$.
\end{proof}

Finally, we prove Theorem~\ref{PCF maps}.
\begin{proof}[Proof of Theorem~\ref{PCF maps}.]
We let $\tilde{C}:=C\cup \{\eta\}$ be the projective closure of $C$ in $\bP^2$, where $\eta$ is the point at infinity. We let $A$ be the ring of functions on $\tilde{C}$ which are regular on $C$. Then $\bff_X(z):=f(z)+X\in A[z]$ and also $\bfg_Y(z) :=g(z)+Y\in A[z]$ where $X$ and $Y$ are the corresponding regular functions on $C$, i.e., the functions giving the coordinates of any point. For any critical points $c_1$ and $c_2$ of $f(z)$, respectively of $g(z)$, we let $\bfc_1:=\frac{c_1}{1}\in A$ and $\bfc_2:=\frac{c_2}{1}\in A$. Then all hypotheses of Theorem~\ref{general main} are satisfied.   Therefore, Theorem~\ref{general main} yields that for each $(x,y)\in C(\C)$ we have that $c_1$ is preperiodic for $\bff_x$ if and only if $c_2$ is preperiodic for $\bfg_y$ (note that Corollary~\ref{all in Kbar} yields that each such $(x,y)$ actually lives over $\Qbar$). Repeating the above analysis for each pair of critical points of $f$, respectively of $g$, we conclude that for each point $(x,y)$ on $C$, $f(z)+x$ is PCF if and only if $g(z)+y$ is PCF.  
\end{proof}


\section{Higher dimensional case}
\label{higher}

In this Section we prove Theorem~\ref{first higher}; so we continue with the notation from Theorem~\ref{first higher}. By abuse of notation, we denote by $P(x)$ the polynomial $P\left(\frac{X}{Z},1\right)$ for the variable $x=\frac{X}{Z}$. Similarly we denote by $Q(y)$ the polynomial $Q\left(\frac{Y}{Z},1\right)$ with the variable $y=\frac{Y}{Z}$.

Let $a,b\in \Qbar^*$. For each $n\ge 0$ we let $A_n(\l,\mu),B_n(\l,\mu)\in \Qbar[\l,\mu]$ such that
$$\bff_{\l,\mu}^n([a:b:1])=[A_n(\l,\mu),B_n(\l,\mu):1].$$
More precisely, $A_0=a$ and $B_0=b$, while for each $n\ge 0$, we have
$$A_{n+1}(\l,\mu)=P(A_n(\l,\mu))+\l B_n(\l,\mu)$$ 
and 
$$B_{n+1}(\l,\mu)=Q(B_n(\l,\mu))+\mu A_n(\l,\mu).$$
It is easy to check that $\deg(A_n)=\deg(B_n)=d^{n-1}$ for all $n\ge 1$ (here we use the fact that $d\ge 3$). 
In order to apply our method we consider the following metrics corresponding to any section $u_0t_0+u_1t_1+u_2t_2$ (with scalars $u_i$) of the line bundle $\cO_{\bP^2}(1)$ of $\bP^2$. Using the coordinates $\l=\frac{t_0}{t_2}$ and $\mu=\frac{t_1}{t_2}$ on the affine subset of $\bP^2$ corresponding to $t_2\ne 0$, we get that the metrics $s:=s^{(a,b)}$ are defined as follows: $\|s([t_0:t_1:t_2])\|_{v,n}$ equals
\[\left\{\begin{array}{ccc}
\frac{|u_0t_0+u_1t_1|_v}{\sqrt[2d^{n-1}]{|c_P|_v^{2(d^n-1)/(d-1)}|t_0|_v^{2d^{n-1}}+|c_Q|_v^{2(d^n-1)/(d-1)}|t_1|_v^{2d^{n-1}}}} & \text{if} & t_2=0\\
\frac{|u_0\l+u_1\mu+u_2|_v}{\sqrt[2\cdot d^{n-1}]{\left|A_n\left(\l,\mu\right)\right|_v^2+ \left|B_n\left(\l,\mu\right)\right|_v^2+1}} & \text{if} & [t_0:t_1:t_2]=[\l:\mu:1]
\end{array}\right.,
\]
if $v$ is archimedean, while if $v$ is nonarchimedean, then $
\|s([t_0:t_1:t_2])\|_{v,n}$ equals
\[\left\{\begin{array}{ccc}
\frac{|u_0t_0+u_1t_1|_v}{\sqrt[d^{n-1}]{\max\{|c_P|_v^{(d^n-1)/(d-1)}|t_0|_v^{d^{n-1}},|c_Q|_v^{(d^n-1)/(d-1)}|t_1|_v^{d^{n-1}}\}}} & \text{if} & t_2=0\\
\frac{|u_0\l+u_1\mu+u_2|_v}{\sqrt[d^{n-1}]{\max\{\left|A_n\left(\l,\mu\right)\right|_v, \left|B_n\left(\l,\mu\right)\right|_v,1\}}} & \text{if} & [t_0:t_1:t_2]=[\l:\mu:1]
\end{array}\right.,
\]
where $P(X,0)=c_PX^d$ and $Q(Y,0)=c_QY^d$ (with nonzero constants $c_P$ and $c_Q$ as assumed in Theorem~\ref{first higher}). We note that if we let
$$\tilde{A}_n(t_0,t_1,t_2):=t_2^{d^{n-1}}\cdot A_n\left(\frac{t_0}{t_2},\frac{t_1}{t_2}\right)$$
and
$$\tilde{B}_n(t_0,t_1,t_2)=t_2^{d^{n-1}}\cdot B_n\left(\frac{t_0}{t_2},\frac{t_1}{t_2}\right)$$
then the map
$$\theta_n : [t_0:t_1:t_2]\lra \left[\tilde{A}_n(t_0,t_1,t_2):\tilde{B}_n(t_0,t_1,t_2):t_2^{d^{n-1}}\right]$$
is an endomorphism of $\bP^2$. Indeed, if $t_2=0$, we have
$$\tilde{A}_n(t_0,t_1,0)=c_P^{(d^n-1)/(d-1)}t_0^{d^{n-1}}$$
and
$$\tilde{B}_n(t_0,t_1,0)=c_Q^{(d^n-1)/(d-1)}t_1^{d^{n-1}}$$
which ensures that the above map is well-defined on $\bP^2$.    Thus,
we have an isomorphism  $\tau:  \cO_{\bP^2}(d^{n-1})  {\tilde \lra} \theta^* \cO_{\bP^2}(1)$, given by $\tau:
t_0^{d^{n-1}}  \mapsto \tilde{A}_n(t_0,t_1,t_2)$, $\tau:
t_1^{d^{n-1}}  \mapsto \tilde{B}_n(t_0,t_1,t_2)$ and $\tau:
t_2^{d^{n-1}} \mapsto t_2^{d^{n-1}}$. 

Let $\| \cdot \|'_v$ be the metric on $\cO_{\bP^1}(1)$
given by 
\[ \| (u_0 t_0 + u_1 t_1 + u_2 t_2)  \|'_v([a:b:c]) = \frac{ | u_0 a +
  u_1 b + u_2 c|_v}
{\max( |a|_v, |b|_v, |c|_v )} \; \text{ if $v$ is nonarchimedean}\] 
and 
\[ \| (u_0 t_0 + u_1 t_1 + u_2 t_2)\|'_v( [a:b:c]) = \frac{ | u_0 a +
  u_1 b + u_2 c|_v}
{\sqrt[2]{|a|_v^2 + |b|_v^2 + |c|_v^2}} \text{ if $v$ is archimedean}\] 
(this is the Fubini-Study metric).  We see then that $\|
\cdot \|_{v,n}$ is simply the $d^n$-th root of $\tau^* \theta_n^* \|
\cdot \|'_v$.  Hence, $\| \cdot \|_{v,n}$ is semipositive.  
 
In order to use Corollary~\ref{to-use} we need only prove that the metrics $\|s\|_{v,n}$ (on $\cO_{\bP^2}(1)$) converge uniformly on $\bP^2$ to a metric $\|s\|_v$ on the adelic metrized line bundle ${\overline L}$. Then  we would get that that the height of each point $[\l:\mu:1]$ with respect to ${\overline L}$ is
\begin{equation}
\label{important specialization formula 2}
h_{{\overline L}}([\l:\mu:1])=\frac{\hhat_{\bff_{\l,\mu}}([a:b:1])}{\hhat_{\bff}([a:b:1])}= d\cdot\hhat_{\bff_{\l,\mu}}([a:b:1]),
\end{equation}
since the canonical height of the \emph{constant} point $[a:b:1]\in\bP^2(\Qbar(\l,\mu))$ under the action of $\bff$ is $\frac{1}{d}$. Indeed, for each positive integer $n$, the height of $[A_n(\l,\mu):B_n(\l,\mu):1]\in\bP^2(\Qbar(\l,\mu))$ with respect to the set of places of $\Qbar(\l,\mu)$ (which correspond to the irreducible divisors of $\bP^2_{\Qbar}$ whose function field is identified with $\Qbar(\l,\mu)$) is the same as the total degree of the polynomials $A_n$ and $B_n$, and thus it is $d^{n-1}$. 

Clearly, the metrics $\|s\|_{v,n}$ converge uniformly on the line at infinity from $\bP^2$.  
As before (see Propositions~\ref{lem:finite uniform}, \ref{6 uniform large l}, and \ref{finally uniformity}), we will achieve our goal once we prove that 
$$\frac{M_{n+1}(\l,\mu)}{M_n(\l,\mu)^d}\text{ is uniformly bounded above and below,}$$
where $M_n(\l,\mu):=\max\{|A_n(\l,\mu)|_v,|B_n(\l,\mu)|_v,1\}$.

Let $K$ be a number field containing $a$, $b$ and all coefficients of both $P$ and $Q$. Let $v\in\Omega_K$ be a fixed place. We first observe that there exist real numbers $L_6>1$ and $\d>0$ (depending only on $v$ and on the coefficients of $P$ and $Q$) such that for each $z\in\C_v$ satisfying $|z|_v\ge L_6$, we have
\begin{equation}
\label{above 18}
\min\{|P(z)|_v,|Q(z)|_v\}\ge\d |z|_v^d.
\end{equation}
(Here we use the fact that $\deg(P)=\deg(Q)=d$, which is equivalent with the fact that both $P(X,0)$ and $Q(Y,0)$ are nonzero polynomials.)

Furthermore, there exists a  constant $C_{15}>1$ (depending only on $v$ and on the coefficients of $P$ and $Q$) such that for each $z\in\bC_v$, we have
\begin{equation}
\label{growth of P and Q 2}
\max\{|P(z)|_v,|Q(z)|_v\}\le C_{15}\cdot\max\{1,|z|_v\}^d.
\end{equation}

\begin{lemma}
\label{again in a finite ball}
Let $L$ be any real number larger than $1$. There exist positive real numbers $C_{16}$ and $C_{17}$ depending on $v$, $L$ and on the coefficients of $P$ and $Q$ such that
$$C_{16}\le \frac{ M_{n+1}(\l,\mu)}{M_n(\l,\mu)^d}\le C_{17},$$
for all $n\ge 1$ and for all $\l,\mu\in\C_v$ such that $\max\{|\l|_v,|\mu|_v\}\le L$.
\end{lemma}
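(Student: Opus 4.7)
My plan is to establish the two bounds separately, exploiting the recurrences
$A_{n+1}=P(A_n)+\l B_n$ and $B_{n+1}=Q(B_n)+\mu A_n$ together with the polynomial estimates~\eqref{above 18} and~\eqref{growth of P and Q 2}. The guiding observation is that the ``main'' terms $P(A_n)$ and $Q(B_n)$ are $d$-homogeneous in $A_n,B_n$, whereas the cross terms $\l B_n$ and $\mu A_n$ are only linear. So the main terms should dominate whenever $M_n$ is large, while small $M_n$ is harmless because then $M_n$ and $M_{n+1}$ are trapped in a compact region where the inequality $M_{n+1}\ge 1$ is already enough.

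For the upper bound, I would apply the triangle inequality (replaced by the ultrametric inequality at nonarchimedean $v$) together with~\eqref{growth of P and Q 2} to get
$$|A_{n+1}|_v\le \e_v(2)\max\{C_{15}M_n^d,\,LM_n\}\le \e_v(2)\max\{C_{15},L\}M_n^d,$$
using $M_n\ge 1$; the same bound applies to $|B_{n+1}|_v$, and the constant $1$ appearing in $M_{n+1}$ is bounded by $M_n^d$ for the same reason. This yields $C_{17}$.

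For the lower bound, my plan is to split on the size of $M_n$ relative to a threshold $M^*:=\max\{L_6,(2L/\d)^{1/(d-1)}\}$, where $L_6,\d$ are supplied by~\eqref{above 18}. When $M_n\le M^*$ the lemma follows trivially from $M_{n+1}\ge 1$, giving $M_{n+1}/M_n^d\ge (M^*)^{-d}$. When $M_n>M^*$, the maximum defining $M_n$ is achieved by one of $|A_n|_v$ or $|B_n|_v$; the two subcases are symmetric under the interchange $(A,P,\l)\leftrightarrow(B,Q,\mu)$, so assume $|A_n|_v=M_n$. Since $M_n>L_6$, \eqref{above 18} gives $|P(A_n)|_v\ge \d M_n^d$, while $|\l B_n|_v\le LM_n\le (\d/2)M_n^d$ by the choice of $M^*$. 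The (ultra)triangle inequality then produces $|A_{n+1}|_v\ge (\d/2)M_n^d$, and hence $M_{n+1}/M_n^d\ge \d/2$. Taking $C_{16}=\min\{(M^*)^{-d},\d/2\}$ completes the argument.

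The step requiring the most attention is the dominance argument in the ``large $M_n$'' regime: one must fix the threshold $M^*$ so that the linear cross term $L M_n$ is absorbed by $(\d/2) M_n^d$ uniformly for all relevant $n$, which uses $d\ge 2$ (available from $d\ge 3$). There is otherwise no serious obstacle; all the constants $C_{16}, C_{17}, M^*$ constructed above are manifestly independent of $n$ and of $(\l,\mu)$ in the prescribed compact region, depending only on $v$, $L$, and the coefficients of $P$ and $Q$, as required.
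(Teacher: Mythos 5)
Your proof is correct and follows essentially the same strategy as the paper's: bound the cross terms $\l B_n$ and $\mu A_n$ by $LM_n$, compare with $P(A_n)$ and $Q(B_n)$ using \eqref{above 18} and \eqref{growth of P and Q 2}, and split the lower bound into a compact region (where $M_{n+1}\ge 1$ suffices) and a large-$M_n$ region (where the degree-$d$ main term absorbs the linear cross term). Your threshold $M^*=\max\{L_6,(2L/\d)^{1/(d-1)}\}$ is precisely the paper's $L_7$, and the only cosmetic difference is that you phrase the upper bound with $\e_v(2)\max\{\cdot,\cdot\}$ rather than the paper's simple sum $C_{15}+L$, which in any case gives an equivalent constant.
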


\begin{proof}
Clearly, by its construction, $M_n\ge 1$. 
Therefore, using \eqref{growth of P and Q 2} we get
$$|A_{n+1}(\l,\mu)|_v\le |P(A_n(\l,\mu))|_v+|\l|_v\cdot |B_n(\l,\mu)|_v\le C_{15}M_n^d+LM_n\le M_n^d(C_{15}+L)$$
and similarly,
$$|B_{n+1}(\l,\mu)|_v\le |Q(B_n(\l,\mu))|_v+ |\mu|_v\cdot |A_n(\l,\mu)|_v\le M_n^d\cdot (C_{15}+L).$$
This proves the existence of the upper bound $C_{17}$ as in the conclusion of Lemma~\ref{again in a finite ball}.

For the proof of the existence of the lower bound $C_{16}$, we  let $L_7\ge L_6$ be a real number satisfying 
\begin{equation}
\label{above 18 plus}
L_{7}^{d-1}>\frac{2L}{\d}.
\end{equation}

Now we split our analysis into two cases.

{\bf Case 1.} $M_n\le L_7$

In this case, clearly, $\frac{M_{n+1}}{M_n^d}\ge \frac{1}{L_{7}^d}$.

{\bf Case 2.} $M_n>L_{7}$

In this case, without loss of generality, we may assume $|A_n(\l,\mu)|_v=M_n$. Then
\begin{align*}
|A_{n+1}(\l,\mu)|_v\\
& \ge |P(A_n(\l,\mu))|_v - |\l|_v\cdot |B_n(\l,\mu)|_v\\
& \ge \d M_n^d -LM_n\text{ using \eqref{above 18} and  that $|A_n(\l,\mu)|_v>L_7\ge L_6$}\\
& \ge \d M_n^d\cdot \left(1-\frac{L}{\d M_n^{d-1}}\right)\\
& \ge \frac{\d M_n^d}{2}\text{ using \eqref{above 18 plus} and that $M_n>L_{7}$.} 
\end{align*} 
This concludes the proof of Lemma~\ref{again in a finite ball}.
\end{proof}

Let now $L$ be a real number larger than 
$$\max\left\{1,\frac{2|Q(b)|_v}{|a|_v},\frac{2|P(a)|_v}{|b|_v}, \frac{2L_{6}}{\min\{|b|_v,|a|_v\}}, \frac{\d L_{6}^{d-1}}{2}, \frac{2^d}{\d |a|_v^{d-1}},\frac{2^d}{\d |b|_v^{d-1}}\right\}.$$
(Here we use the fact that both $a$ and $b$ are nonzero.) 

\begin{lemma}
\label{outside the finite ball}
If either $|\l|_v>L$, or $|\mu|_v>L$, then 
$$\frac{\d}{2}\le \frac{M_{n+1}}{M_n^d}\le C_{15}+\frac{\d}{2},$$
for each $n\ge 1$.
\end{lemma}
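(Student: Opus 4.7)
The plan is to prove Lemma~\ref{outside the finite ball} by induction on $n\ge 1$, carrying the strengthened hypothesis
\[
(H_n)\qquad M_n(\l,\mu)^{d-1}\ge \frac{2\max(|\l|_v,|\mu|_v)}{\d}.
\]
The key preliminary observation is that once $(H_n)$ holds, the condition $L\ge \d L_6^{d-1}/2$ together with $\max(|\l|_v,|\mu|_v)>L$ forces $M_n^{d-1}\ge 2L/\d\ge L_6^{d-1}$, and so $M_n\ge L_6$; this in turn makes inequality~\eqref{above 18} applicable to whichever of $|A_n|_v,|B_n|_v$ realises $M_n$. By the symmetry $\l\leftrightarrow\mu$, $a\leftrightarrow b$ of the setup, it will suffice to treat the case $|\l|_v>L$.

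For the base case $n=1$, I would start from $A_1=P(a)+\l b$ and use $|\l|_v>L\ge 2|P(a)|_v/|b|_v$ with the reverse triangle inequality to get $|A_1|_v\ge |\l b|_v/2$, hence $M_1\ge |\l|_v|b|_v/2$. When $|\l|_v\ge |\mu|_v$ this reduces $(H_1)$ to $|\l|_v^{d-2}\ge 2^d/(\d|b|_v^{d-1})$, which holds thanks to $d\ge 3$ and the conditions $L>1$, $L\ge 2^d/(\d|b|_v^{d-1})$; when $|\mu|_v>|\l|_v$ (so in particular $|\mu|_v>L$), the same argument applied to $B_1=Q(b)+\mu a$ using the conditions involving $|a|_v$ will establish $(H_1)$.

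For the inductive step, assume $(H_n)$. Since $M_n\ge L_6>1$, either $|A_n|_v=M_n$ or $|B_n|_v=M_n$; by symmetry the latter case is handled identically with $Q$ in place of $P$, so assume the former. Inequality \eqref{above 18} gives $|P(A_n)|_v\ge \d M_n^d$, while $(H_n)$ gives $|\l|_v\, M_n\le (\d/2) M_n^d$; together these yield
\[
|A_{n+1}|_v\ge |P(A_n)|_v-|\l|_v|B_n|_v\ge \d M_n^d-\tfrac{\d}{2}M_n^d=\tfrac{\d}{2}M_n^d,
\]
so $M_{n+1}\ge \d M_n^d/2$. The upper bound will follow similarly from \eqref{growth of P and Q 2} applied to both $A_{n+1}$ and $B_{n+1}$, using the $|\l|_v$-half and the $|\mu|_v$-half of $(H_n)$ respectively. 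Finally, since $(H_n)$ and $L>1$ imply $M_n^{d-1}\ge 2/\d$, one gets $M_{n+1}\ge M_n$, so $M_{n+1}^{d-1}\ge M_n^{d-1}$ and $(H_{n+1})$ is inherited from $(H_n)$, closing the induction.

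The main obstacle will be the bookkeeping rather than any deep idea: each of the seven quantities dominated by $L$ in the lemma's hypothesis is engineered to make one inequality go through (either in the base case, in the reduction $M_n\ge L_6$, or in bounding $|\l|_v M_n$ against $(\d/2) M_n^d$), and the $\l\leftrightarrow\mu$ symmetry must be carefully invoked to cover all sub-cases. No fundamentally new idea beyond what appeared in the proof of Lemma~\ref{again in a finite ball} should be required.
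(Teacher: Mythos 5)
Your proposal is correct and follows essentially the same strategy as the paper: it reduces the lemma to an auxiliary inductive claim that $M_n^{d-1}\ge 2\max(|\l|_v,|\mu|_v)/\d$ (the paper's Claim~\ref{order of growth initially}, after the paper's initial normalization $|\l|_v\ge|\mu|_v$), establishes the base case from the linear terms $b\l$, $a\mu$ in $A_1$, $B_1$ and the reverse triangle inequality, and closes the induction by combining \eqref{above 18} and \eqref{growth of P and Q 2} with the fact that $(H_n)$ bounds $\max(|\l|_v,|\mu|_v)\,M_n$ by $(\d/2)M_n^d$. The only cosmetic difference is that the paper fixes the stronger WLOG $|\l|_v\ge|\mu|_v$ up front, so the auxiliary claim is stated with $|\l|_v$ in place of the max, whereas you invoke the $\l\leftrightarrow\mu$, $a\leftrightarrow b$, $P\leftrightarrow Q$ symmetry at the base case instead; both are valid.
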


\begin{proof}
Without loss of generality, we may assume $|\l|_v\ge |\mu|_v$; hence $|\l|_v>L$. We note that
$$A_1(\l,\mu)=P(a)+b\l\text{ and }B_1(\l,\mu)=Q(b)+a\mu.$$
Then
\begin{align*}
|A_1(\l,\mu)|_v\\
& \ge |b|_v|\l|_v-|P(a)|_v\\
& \ge \frac{|b|_v|\l|_v}{2}\text{ since $|\l|_v>L>\frac{2|P(a)|_v}{|b|_v}$}\\
& \ge \frac{L\cdot |b|_v}{2}\\
& >L_6\text{ since $L>\frac{2L_6}{|b|_v}$.}
\end{align*}
\begin{claim}
\label{order of growth initially}
$M_n(\l,\mu)^{d-1}\ge \frac{2|\l|_v}{\d}$ for all $n\ge 1$.
\end{claim}

\begin{proof}[Proof of Claim~\ref{order of growth initially}.]
The claim follows by induction on $n$. In the case $n=1$, we have
\begin{align*}
 M_1^{d-1}\\
& \ge |A_1(\l,\mu)|_v^{d-1}\\
& \ge \left(\frac{|b|_v|\l|_v}{2}\right)^{d-1}\\
& \ge \frac{|b|_v|\l|_v}{2}\cdot \left(\frac{|b|_v\cdot L}{2}\right)^{d-2}\text{ since $|\l|_v>L$}\\
& \ge \frac{2|\l|_v}{\d}\cdot \frac{L}{\frac{2^d}{\d |b|_v^{d-1}}}\text{ since $L>1$ and $d-2\ge 1$}\\
& \ge \frac{2|\l|_v}{\d}\text{ since $L>\frac{2^d}{\d |b|_v^{d-1}}$.}
\end{align*}
Now, assume $M_n^{d-1}\ge \frac{2|\l|_v}{\d}$. First we note that since 
$$|\l|_v> L>\frac{\d L_{6}^{d-1}}{2},$$ 
we get that $M_n>L_{6}>1$. So, if $|A_n(\l,\mu)|_v=M_n(\l,\mu)$ then
\begin{align*}
M_{n+1}(\l,\mu)\\
& \ge |P(A_n(\l,\mu))|_v - |\l|_v\cdot |B_n(\l,\mu)|_v\\
& \ge \d M_n(\l,\mu)^d -|\l|_vM_n(\l,\mu)\text{ using \eqref{above 18}}\\
& \ge \d M_n^d\left(1-\frac{|\l|_v}{\d M_n^{d-1}}\right)\\
& \ge \frac{\d}{2}\cdot M_n^d\text{ using the inductive hypothesis.}
\end{align*}
Similarly, if $|B_n(\l,\mu)|_v=M_n(\l,\mu)$ then
\begin{align*}
M_{n+1}(\l,\mu)\\
& \ge |Q(B_n(\l,\mu))|_v-|\mu|_v\cdot |A_n(\l,\mu)|_v\\
&  \ge \d M_n(\l,\mu)^d -|\l|_vM_n(\l,\mu)\text{ using \eqref{above 18} and that $|\l|_v\ge |\mu|_v$}\\
& \ge \d M_n^d\left(1-\frac{|\l|_v}{\d M_n^{d-1}}\right)\\
& \ge \frac{\d}{2}\cdot M_n^d\text{ using the inductive hypothesis.} 
\end{align*}
So, the above inequalities yield that 
$$M_{n+1}\ge M_n\cdot \frac{\d M_n^{d-1}}{2}\ge M_n\cdot |\l|_v\ge M_n\cdot L\ge  M_n,$$
and thus $M_{n+1}^{d-1}\ge \frac{2|\l|_v}{\d}$ as well.
\end{proof}
Furthermore the above proof actually shows the left-hand side of the inequality from the conclusion of our Lemma~\ref{outside the finite ball}, i.e.,
$$M_{n+1}(\l,\mu)\ge \frac{\d}{2}\cdot M_n(\l,\mu)^d.$$ 
We are left to prove the right-hand side of the inequality from our conclusion. For this we use again Claim~\ref{order of growth initially}, and infer that
\begin{align*}
M_{n+1}(\l,\mu)\\
& \le \max\{|P(A_n(\l,\mu))|_v,|Q(B(\l,\mu))|_v\}+\max\{|\l|_v,|\mu|_v\}\cdot M_n(\l,\mu)\\
& \le C_{15}M_n^d+|\l|_v\cdot M_n\text{using \eqref{growth of P and Q 2}}\\
& \le M_n^d\cdot \left(C_{15}+\frac{|\l|_v}{M_n^{d-1}}\right)\\
& \le \left(C_{15}+\frac{\d}{2}\right)\cdot M_n^d,
\end{align*}
as desired.
\end{proof}

Therefore (using Lemmas~\ref{again in a finite ball} and \ref{outside the finite ball}), arguing precisely as in the proof of Proposition~\ref{finally uniformity} we obtain that  $\{\log\|s\|_{v,n}\}$ converges uniformly on $\bP^2$. Indeed, by Lemmas~\ref{again in a finite ball} and \ref{outside the finite ball} we obtain that there exists a positive constant $C_{18}<1$ such that 
$$\frac{1}{C_{18}}\le \frac{M_{n+1}}{M_n^d}\le C_{18}.$$
Hence, there exists a positive constant $C_{19}$  such that for each $m,n\in\N$ with $n>m$
$$\left|\log\|s([\l:\mu:1])\|_{v,n}-\log\|s([\l:\mu:1])\|_{v,m}\right|\le \frac{C_{19}}{d^m}.$$
Hence the sequence of metrics converges uniformly on $\bP^2$.    
This allows us to use Corollary~\ref{to-use} and conclude that for any given $a_i,b_i\in\Qbar^*$ (for $i=1,2$), if there exists a set of points $[\l:\mu:1]$ which is Zariski dense in $\bP^2$ such that for each such pairs $(\l,\mu)$  both $[a_1:b_1:1]$ and $[a_2:b_2:1]$ are preperiodic for $\bff_{\l,\mu}$ then the two sequences of metrics $\left\|s^{(a_i,b_i)}\right\|_{v,n}$ (corresponding to the two starting points $[a_i:b_i:1]$ for $i=1,2$) converge to the \emph{same} metric. Hence, using \eqref{important specialization formula 2}, we obtain the equality of the two canonical heights:
$$\hhat_{\bff_{\l,\mu}}([a_1:b_1:1])=\hhat_{\bff_{\l,\mu}}([a_2:b_2:1]).$$ 
Therefore, for \emph{each} $(\l,\mu)\in\Qbar\times \Qbar$,  $[a_1:b_1:1]$ is preperiodic for $\bff_{\l,\mu}$ if and only if $[a_2:b_2:1]$ is preperiodic for $\bff_{\l,\mu}$. This concludes the proof of Theorem~\ref{first higher}.

\begin{remark}
\label{indeed P2}
If one considers a $2$-parameter family of endomorphisms $\bff_{\l,\mu}$ of $\bP^1$, then for any two starting points $\bfc_1,\bfc_2\in\bP^1$, one expects that there exists a Zariski dense set of parameters $(\l,\mu)$ such that both $\bfc_1$ and $\bfc_2$ are preperiodic for $\bff_{\l,\mu}$. Indeed, for each $i=1,2$ and for each distinct positive integers $m$ and $n$ there exists a curve $C_{i,m,n}$ in the moduli containing all $(\l,\mu)$ such that $\bff_{\l,\mu}^m(\bfc_i)=\bff^n_{\l,\mu}(\bfc_i)$. Thus generically $C_{1,m,n}\cap C_{2,k,\ell}\ne \emptyset$ (for any two pairs of distinct positive integers $(m,n)$ and $(k,\ell)$). Therefore one would expect 
$$\bigcup_{\substack{k,\ell,m,n\in\N\\k\ne \ell\\m\ne n}}C_{1,m,n}\cap C_{2,k,\ell}$$
is Zariski dense in the moduli. Hence the first interesting case when one expects the principle of unlikely intersections in algebraic dynamics holds for a $2$-dimensional moduli is for a family of endomorphisms of $\bP^2$ (as proved in Theorem~\ref{first higher}). 
\end{remark}

\begin{remark}
\label{why Zariski dense}
In the case of a family $\bff_{\l,\mu}$ of endomorphisms of $\bP^2$, the right question is indeed whether there exist a Zariski dense set of points in the moduli for which both starting points $\bfc_1$ and $\bfc_2$ are preperiodic. There are examples when there are infinitely many pairs $(\l,\mu)$ such that both $\bfc_1$ and $\bfc_2$ are preperiodic under $\bff_{\l,\mu}$, but it is not true that $\bfc_1$ is preperiodic under $\bff_{\l,\mu}$ if and only if $\bfc_2$ is preperiodic under $\bff_{\l,\mu}$; this happens when the corresponding points $[\l:\mu:1]$ are not Zariski dense in the moduli $\bP^2$. For example, let
$$\bff_{\l,\mu}\left([X:Y:Z]\right)=[X^3-XZ^2+\l YZ^2:Y^3+\mu XZ^2:Z^3]$$
and $\bfc_1=[0:1:1]$, $\bfc_2=[1:2:1]$. Then $\bfc_1$ is a fixed point for $$\bff_{0,0}\left([X:Y:Z]\right)=[X^3-XZ^2:Y^3:Z^3],$$
while $\bfc_2$ is not preperiodic for the same map. On the other hand, there exist infinitely many $(\l,\mu)\in\Qbar\times \Qbar$ such that both $\bfc_1$ and $\bfc_2$ are preperiodic for $\bff_{\l,\mu}$, but they all lie on a line in the moduli. 

Indeed, let $\mu=\zeta -8$, for any root of unity $\zeta$. Then both $\bfc_1$ and $\bfc_2$ are preperiodic under the action of 
$$\bff_{0,\mu}\left([X:Y:Z]\right)=[X^3-XZ^2:Y^3+(\zeta-8)XZ^2:Z^3].$$
Clearly, $\bfc_1$ is fixed by any map $\bff_{0,\mu}$. On the other hand,  $\bff_{0,\zeta-8}(\bfc_2)=[0:\zeta:1]$, 
which  is preperiodic under any map $\bff_{0,\mu}$ since $\zeta$ is a root of unity and $\bff_{0,\mu}^n(\bfc_2)=\left[0:\zeta^{3^{n-1}}:1\right]$ for any positive integer $n$.
\end{remark}







\end{document}